\newtheorem{Theorem}{Theorem}[section]
\newtheorem{Proposition}[Theorem]{Proposition}
\newtheorem{Lemma}[Theorem]{Lemma}
\newtheorem{Corollary}[Theorem]{Corollary}
\theoremstyle{definition}
\newtheorem{Remark}[Theorem]{Remark}
\def\re{{\mathbf {Re\,}}}
\newcommand{\db}{\overline\partial}
\DeclareMathOperator{\ric}{Ric}
\DeclareMathOperator{\supp}{supp}
\DeclareMathOperator{\dist}{dist}
\DeclarePairedDelimiter\floor{\lfloor}{\rfloor}
\newcommand{\cali}[1]{\mathscr{#1}}
\newcommand{\cO}{\cali{O}} \newcommand{\cI}{\cali{I}}
\newcommand{\cM}{\cali{M}}
\newcommand{\cC}{\cali{C}}
\newcommand{\field}[1]{\mathbb{#1}}
\newcommand{\R}{\field{R}}
\newcommand{\C}{\field{C}}
\newcommand{\N}{\field{N}}
\newcommand{\D}{\field{D}}
\newcommand{\comment}[1]{}
\begin{document}

\title[Bergman kernel asymptotics for singular metrics
on punctured Riemann surfaces]{Bergman kernel asymptotics for singular metrics
on punctured Riemann surfaces}
\author{Dan Coman}
\thanks{D.\ Coman is partially supported by the NSF Grant DMS-1300157
and DMS-1700011}
\address{Department of Mathematics, Syracuse University, Syracuse, NY 13244-1150, USA}
\email{dcoman@syr.edu}
\author{Semyon Klevtsov}
\address{Universit{\"a}t zu K{\"o}ln, Institut f{\"u}r Theoretische Physik,
Z{\"u}lpicher Str.\ 77, 50937 K{\"o}ln, \newline
    \mbox{\quad}\,Deutschland}
\email{sam.klevtsov@gmail.com}
\thanks{{S.\ Klevtsov is partially supported by the German Excellence 
Initiative at the University of Cologne, grants DFG-grant ZI513/2-1, NSh-1500.2014.2,
RFBR 17-01-00585 and CRC/TR 183}}
\author{George Marinescu}
\address{Universit{\"a}t zu K{\"o}ln, Mathematisches Institut, Weyertal 86-90, 50931 K{\"o}ln, 
Deutschland   \newline
    \mbox{\quad}
Institute of Mathematics `Simion Stoilow', Romanian Academy, Bucharest, Romania}
\email{gmarines@math.uni-koeln.de}
\thanks{G.\ Marinescu is partially supported by DFG funded project CRC/TRR 191
and gratefully acknowledges the support of Syracuse University,
where part of this paper was written.}
\thanks{The authors were partially funded through the Institutional Strategy 
of the University of Cologne within the German Excellence Initiative.
They gratefully acknowledge support from the Simons Center for Geometry and Physics, 
Stony Brook University, at which some of the research for this paper was performed}
\subjclass[2010]{Primary 32L10; Secondary 32A60, 32C20, 32U40, 81Q50.}
\keywords{Bergman kernel function, singular Hermitian metric, quantum Hall effect}

\date{April 22, 2017}

\pagestyle{myheadings}

\begin{abstract} 
We consider singular metrics on a punctured Riemann surface and on a line bundle 
and study the behavior 
of the Bergman kernel in the neighborhood of the punctures. 
The results have an interpretation in terms of the asymptotic profile 
of the density of states function of the lowest Landau level in quantum Hall effect.
\end{abstract}

\maketitle
\tableofcontents

\section{Introduction}\label{S:intro}
The purpose of this paper is to study the behavior of the Bergman kernel function
of a Hermitian holomorphic line bundle over a punctured Riemann surface.
A quite general result about asymptotics of Bergman kernel on non-compact manifolds
was given in \cite{MM07,MM08,MM12,HsM11}.
Let $(Y,\omega)$ be a complete K\"ahler manifold of dimension $n$
and $(L,h)\to X$ be a holomorphic Hermitian line bundle such that
\begin{equation}\label{e:c} 
c_1(L,h)\geq \varepsilon\omega\,,\:\:\ric_\omega\geq-C\omega\,,
\end{equation}
for some constants $\varepsilon, C>0$. 
If $(L,h)=(K_Y,h^{K_Y})$, where $K_Y= \det(T^{*(1,0)}Y)$ is the canonical bundle
and $h^{K_Y}$ is induced by $\omega$, 
condition \eqref{e:c} is to be replaced by
\begin{equation}\label{e:c1} 
\ric_\omega\leq-\varepsilon\omega\,.
\end{equation}
Under these assumptions it is known that the Bergman kernel function $P_p(x)$ of 
the space of $L^2$-holomorphic sections $H^0_{(2)}(Y,L^p)$ has the following expansion
\begin{equation}\label{e:bke1} 
\begin{split}
&P_p(x)=b_0(x)p^n+b_1(x)p^{n-1}+\ldots=\sum_{j=0}^\infty b_j(x)p^{n-j}\,,\\
&\text{uniformly on compact sets relative to any $\cC^\ell$-topology.}
\end{split}
\end{equation}
More precisely, there exist
coefficients $b_j\in\cC^\infty(Y)$, $j\in\N$, such that for any compact set $K\subset Y$,
any $k,\ell\in\N$, there exists $C_{k,\ell,K} > 0$ such that for $p\in\N$
\begin{equation}\label{e:bke2}
\left\vert\frac{1}{p^n} P_p(x) -\sum_{j=0}^k b_j(x)p^{-j}\right\vert_{\cC^\ell(K)}
\leq C_{k,\ell,K}\,p^{-k-1}.
\end{equation}
Moreover, we have
\begin{equation}\label{e:coeff}
b_0=\frac{c_1(L,h)^n}{\omega^n}\,,\:\: 
b_1=\frac{1}{8\pi}\,\frac{c_1(L,h)^n}{\omega^n}
\left(r^Y-2\Delta\log\left(\frac{c_1(L,h)^n}{\omega^n}\right)\right)\,,
\end{equation}
where $r^Y$ and $\Delta$ are the scalar curvature and the Bochner Laplacian 
of the metric associated to the K\"ahler form $c_1(L,h)$.

Assume now that $X$ is compact and $c_1(L,h)=\omega$, hence $b_0=1$. 
The expansion $P_p(x)=p^n+O(p^{n-\frac12})$ was proved by Tian \cite[Section\,3]{Ti90} 
in the $\cC^4$-topology and generalized by Ruan \cite{Ru98} to $P_p(x)=p^n+O(p^{n-1})$
in any $\cC^\ell$-topology. Berndtsson \cite{Be03} gave a simple proof of the uniform convergence
$P_p(x)=b_0(x)p^n+o(p^n)$. The asymptotics \eqref{e:bke1} were proved by Catlin \cite{Ca99} and 
Zelditch \cite{Z98}.

In the quantum Hall effect (QHE) the density of states for the lowest 
Landau level on a Riemann surface, or more generally on a K\"ahler manifold, 
is given by the Bergman kernel on the diagonal, 
see \cite{DouKle10} where \eqref{e:bke1} 
was derived using perturbation theory for the quantum mechanical path integral. 
The metric dependence and gravitational anomaly in the quantum Hall states has recently 
been studied using the asymptotic expansion of the Bergman kernel \cite{SK,KMMW} 
as well as other methods \cite{AG1,AG2,CLW1,CLW2,FK14,BR,KW}, see \cite{K16} for a review. 
The quantum Hall states and the density function have been studied recently for surfaces 
with conical singularities \cite{CCLW,Kl16}, singular surfaces with 
$Z_n$-symmetry \cite{Gr16} and for cusps \cite{C16}, see \cite{AMM} 
for the results for the Bergman kernel. Remarkably, 
the quantum Hall effect  on a cone 
can also be realized experimentally, see \cite{S16}, where synthetic 
Landau levels on a cone were constructed in a photon resonator. 
In this paper we study the asymptotic profile of the Bergman kernel 
for more general singular geometries.

If $X$ is non-compact, \eqref{e:bke1} was deduced in 
\cite[Theorem\,6.1.1]{MM07}, \cite[Theorem\,3.11]{MM08} under hypothesis \eqref{e:c}, 
resp.\ \eqref{e:c1}, see also \cite[Theorem\,1.6]{HsM11}. 
We refer the reader to the book \cite{MM07} for a comprehensive study of the Bergman kernel 
and its applications and also to the survey \cite{Ma10}.

\par By the above mentioned works, the asymptotic expansion of the Bergman kernel
is well understood on fixed compact sets. 
In this paper we consider very general metrics
on a punctured Riemann surface and on a line bundle and study the behavior 
of the Bergman kernel in the neighborhood of the punctures. The asymptotics depend on the
singularities of the metrics on the base manifold and on the bundle.

In the case of metrics with Poincar\'e singularities,
\cite{AMM} provides a weighted estimate in the $C^m$-norm
 near the punctures for the global Bergman kernel compared to the Bergman kernel 
of the punctured disc, uniformly in the tensor powers of the given bundle.
Our estimates complement the results of \cite{AMM}.
 
More precisely, we consider in this paper the following setting:

\medskip

(A) $X$ is a Riemann surface and $\Sigma=\{x_j:\,j\geq1\}\subset X$ 
is a discrete closed subset. We fix a smooth Hermitian metric $\Omega$ on $X$ 
and denote by $\dist$ the induced distance.

\smallskip

(B) $\omega$ is a Hermitian metric of class $\cC^2$ on $X\setminus\Sigma$ 
such that $\omega\geq c\,\Omega$, where $c:X\to(0,\infty)$ is a continuous function
and $\ric_\omega\geq-2\pi B\omega$, for some constant $B>0$. 

\smallskip

(C) $L$ is a holomorphic line bundle on $X$ and $h$ is a Hermitian metric of class 
$\cC^3$ on $L\vert_{X\setminus\Sigma}$ such that $c_1(L,h)\geq2\varepsilon\omega$ 
on $X\setminus\Sigma$, for some constant $\varepsilon>0$. 

\medskip

\par Let $h_p:=h^{\otimes p}$ be the metric induced by $h$ on $L^p\vert_{X\setminus\Sigma}$, 
where $L^p:=L^{\otimes p}$. We denote by $H^0_{(2)}(X\setminus\Sigma,L^p)$ 
be the Bergman space of $L^2$-holomorphic sections of $L^p$ relative to the metric 
$h_p$ and $\omega$, 
\begin{equation}\label{e:bs}
H^0_{(2)}(X\setminus\Sigma,L^p)=\left\{S\in H^0(X\setminus\Sigma,L^p):\,
\|S\|_p^2:=\int_{X\setminus\Sigma}|S|^2_{h_p}\,\omega<\infty\right\},
\end{equation}
endowed with the obvious inner product. 

\par Let $d_p\in\N\cup\{\infty\}$ be the dimension of $H^0_{(2)}(X\setminus\Sigma,L^p)$. 
We denote by $P_p$ the Bergman kernel function, 
of the space $H^0_{(2)}(X\setminus\Sigma,L^p)$, which is defined as follows. 
For $p\geq1$, if $\{S_\ell^p\}_{\ell\geq1}$ is an orthonormal 
basis of $H^0_{(2)}(X\setminus\Sigma,L^p)$ , then 
\begin{equation}\label{e:BFS1}
P_p(x)=\sum_{\ell=1}^{d_p}|S^p_\ell(x)|_{h_p}^2\,.
\end{equation} 
Note that $P_p$
is independent of the choice of basis (see \cite[Lemma 3.1]{CM11}). 

\par We fix $x_j\in\Sigma$ and a constant $R_j>0$ with the property that $x_j$ 
has a coordinate neighborhood $U_{x_j}$ centered at $x_j$ such that 
the coordinate disc $B(x_j,2R_j)\Subset U_{x_j}$ and 
\begin{equation}\label{e:Rj}
\dist(B(x_j,2R_j),\Sigma\setminus\{x_j\})\geq R_j.
\end{equation}
Let $e_j$ be a local holomorphic frame of $L$ on $B(x_j,2R_j)$ and 
let $\varphi_j$ be the subharmonic weight of $h$ on $B(x_j,2R_j)\setminus\{x_j\}$ 
corresponding to $e_j$, so $|e_j|_h=e^{-\varphi_j}$. We assume that, in local coordinate 
$z$ on $U_{x_j}$, $\varphi_j$ has the form 
\begin{equation}\label{e:varphi}
\varphi_j(z)=\nu_j\log|z|+\psi_j(z),\,\text{ where }\nu_j\in\R,\;\psi_j\in\cC^3(B(x_j,2R_j)\setminus\{x_j\}).
\end{equation}
Moreover, we assume that there exist constants $A_j>0,\,\alpha_j\geq0$ such that 
the third order derivatives of $\psi_j$ verify
\begin{equation}\label{e:dpsi}
|D^\mu\psi_j(z)|\leq A_j|z|^{-\alpha_j}\,,\,\text{ for all } z\in B(x_j,2R_j)\setminus\{x_j\},
\;|\mu|=3.
\end{equation}
In particular, equation \eqref{e:varphi} allows us to consider 
the special cases when $\psi_j$ is bounded or smooth near $x_j$, 
i.e. the metric $h$ has logarithmic singularities at $\Sigma$. 

Next, we can write on $B(x_j,2R_j)\setminus\{x_j\}$,
\begin{equation}\label{e:rho}
\omega(z)=\frac{i}{2}\,\rho_j(z)\,dz\wedge d\overline z=\rho_j(z)\,dm(z),
\end{equation}
where $dm(z)$ is the Lebesgue measure in the coordinate $z$. 
We assume that there exist constants $A'_j>0,\,\beta_j\geq0$ such 
that the first order derivatives of $\rho_j$ verify 
\begin{equation}\label{e:drho}
|D\rho_j(z)|\leq A'_j|z|^{-\beta_j}\,,\,\text{ for all } z\in B(x_j,2R_j)\setminus\{x_j\}.
\end{equation}
Finally we let 
\begin{equation}\label{e:delta}
\delta_j=\max\{8/3,8\beta_j/3,8\alpha_j\}.
\end{equation}

\par In \cite[Section\,2]{Be03} Berndtsson gave a simple proof for the first order asymptotics 
of the Bergman kernel function $P_p(x)=b_0(x)p^n+o(p^n)$ in the case of powers of an 
ample line bundle on a projective manifold. Adapting his methods to our situation we prove the following
asymptotics near the singular points. They show explicitly how the estimates depend on the distance
to the singular points and on the parameters $\alpha_j$, $\beta_j$ which encode the singularities of the metrics.
\begin{Theorem}\label{T:mt1} Let $(X,\Sigma,\omega,L,h)$ 
verify assumptions (A)-(C), and let $x_j\in\Sigma$. Let $R_j$ 
be defined by \eqref{e:Rj}, $\delta_j$ by \eqref{e:delta}, 
and assume that $h,\,\omega$ satisfy \eqref{e:dpsi}, respectively \eqref{e:drho}. 
There exists a constant $C_j>1$ such that if $x\in B(x_j,R_j)\setminus\{x_j\}$ and 
\begin{equation}\label{e:p}
p>C_j\dist(x,x_j)^{-\delta_j}
\end{equation}
then
\begin{equation}\label{e:Bexp00}
\Big|\frac{P_p(x)}{p}\,\frac{\omega_x}{c_1(L,h)_x}-1\Big|
\leq C_j\Big(p^{-1/8}\dist(x,x_j)^{-\alpha_j}+p^{-3/8}\dist(x,x_j)^{-\beta_j}\Big).
\end{equation}
\end{Theorem}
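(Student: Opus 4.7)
My plan is to adapt Berndtsson's local argument \cite{Be03} for the leading-order Bergman kernel asymptotics to the present setting, in which both $\omega$ and $h$ may be singular at $x_j$. Write $r := \dist(x, x_j)$ and introduce the local scale $\eta := c_0\, p^{-3/8}$ for a small constant $c_0 > 0$. The assumption $p > C_j r^{-\delta_j}$ with $\delta_j \geq 8/3$ forces $\eta \leq r/2$, so that $B(x,\eta) \subset B(x, r/2) \subset B(x_j, 2R_j)\setminus\{x_j\}$ and this smaller disc is simply connected. On it I choose a holomorphic branch of $\log z$ and perform a local holomorphic frame change $\tilde e_j := z^{\nu_j} e^{H(z-x)} e_j$, where $H$ is a holomorphic polynomial of degree $\leq 2$ absorbing the linear and holomorphic-quadratic parts of the Taylor expansion of $\psi_j$ at $x$. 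In the new frame the weight of $h$ becomes
\[
\tilde\varphi_j(z) = \psi_j(x) + \lambda_x|z-x|^2 + R_3(z), \qquad \lambda_x := \partial_z\partial_{\bar z}\psi_j(x),
\]
where by \eqref{e:dpsi} (and $|z|\geq r/2$ on $B(x,r/2)$) the cubic remainder satisfies $|R_3(z)| \leq C\eta^3 r^{-\alpha_j}$ on $B(x,\eta)$, and by \eqref{e:drho} one has $|\rho_j(z)-\rho_j(x)| \leq C\eta\, r^{-\beta_j}$ there. Since $c_1(L,h)_x/\omega_x = 2\lambda_x/(\pi \rho_j(x))$, the target leading term $p\cdot c_1(L,h)_x/\omega_x$ matches exactly the Bergman kernel at the origin of the Gaussian model on $\C$ with weight $e^{-2p\lambda_x|z|^2}$ and volume $\rho_j(x)\,dm$.

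For the upper bound, let $S \in H^0_{(2)}(X\setminus\Sigma, L^p)$ satisfy $\|S\|_p = 1$ and write $S = \tilde f\, \tilde e_j^{\otimes p}$ on $B(x, r/2)$, so that $\tilde f$ is holomorphic and $|S|^2_{h_p} = |\tilde f|^2 e^{-2p\tilde\varphi_j}$. Applying Berndtsson's sharp weighted submean inequality for $|\tilde f|^2$ against the Gaussian weight $e^{-2p\lambda_x|z-x|^2}$ on $B(x,\eta)$ gives the constant $2p\lambda_x/\pi + o(1)$ in the limit $p\eta^2 = c_0^2 p^{1/4}\to\infty$. Reinserting the small perturbations $R_3$ in the weight and $\rho_j(z)-\rho_j(x)$ in the volume, and using $\|S\|_p^2 = 1$ to bound the integral, one obtains
\[
P_p(x) \leq \frac{2p\lambda_x}{\pi\rho_j(x)}\bigl(1 + C[p^{-1/8}r^{-\alpha_j} + p^{-3/8}r^{-\beta_j}]\bigr).
\]

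For the lower bound I construct a quasi-holomorphic peak section. Pick $\chi \in \cC_c^\infty(B(x,\eta))$ with $\chi \equiv 1$ on $B(x,\eta/2)$ and $|\db\chi|\leq C/\eta$, and set $\sigma := \chi\, \tilde e_j^{\otimes p}$. The hypotheses (B)--(C) yield $c_1(L^p, h_p) + \ric_\omega \geq (2p\varepsilon - 2\pi B)\omega \geq p\varepsilon\omega$ for large $p$, so the Kodaira--H\"ormander $L^2$-estimate produces $v$ with $\db v = \db \sigma$ and
\[
\|v\|_p^2 \leq \frac{1}{p\varepsilon}\int_{\eta/2\leq |z-x|\leq \eta} |\db\chi|^2\, e^{-2p\tilde\varphi_j}\,\omega \leq C\,p^{-1}\, e^{-p\lambda_x\eta^2/2}\,e^{-2p\tilde\varphi_j(x)},
\]
using that on the annulus $e^{-2p\tilde\varphi_j(z)}\leq e^{-2p\tilde\varphi_j(x) - p\lambda_x\eta^2/2}(1+o(1))$, and $p\lambda_x\eta^2 \sim p^{1/4}\to\infty$ gives super-polynomial decay. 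Then $S := \sigma - v$ is a global holomorphic $L^2$-section; elliptic regularity makes $v$ holomorphic on $B(x,\eta/2)$ (where $\db\sigma = 0$), so the ordinary submean value inequality yields $|v(x)|^2_{h_p} \leq C\eta^{-2}\|v\|_p^2 \ll |\sigma(x)|^2_{h_p} = e^{-2p\tilde\varphi_j(x)}$. Computing $\|\sigma\|_p^2$ as a perturbed Gaussian integral on $B(x,\eta)$ and using $\|S\|_p^2 = \|\sigma\|_p^2 - \|v\|_p^2$ produces
\[
P_p(x) \geq \frac{|S(x)|^2_{h_p}}{\|S\|_p^2} \geq \frac{2p\lambda_x}{\pi\rho_j(x)}\bigl(1 - C[p^{-1/8}r^{-\alpha_j} + p^{-3/8}r^{-\beta_j}]\bigr).
\]

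The main obstacle is the simultaneous balancing of three competing scales: the disc radius $\eta$ must be small enough for the cubic Taylor remainder (contributing $p\eta^3 r^{-\alpha_j}$) and the volume fluctuation (contributing $\eta\,r^{-\beta_j}$) to give the claimed error rates, yet large enough that the Gaussian tail $e^{-p\lambda_x\eta^2}$ is super-polynomially small so the local quadratic model accurately captures the global kernel. The choice $\eta = c_0 p^{-3/8}$ is the unique solution: it forces $p\eta^2 = c_0^2 p^{1/4}\to\infty$ and produces precisely $p\eta^3 r^{-\alpha_j}\sim p^{-1/8}r^{-\alpha_j}$ and $\eta r^{-\beta_j}\sim p^{-3/8}r^{-\beta_j}$, while the threshold $\delta_j = \max(8/3, 8\beta_j/3, 8\alpha_j)$ in \eqref{e:delta} is exactly the range of $p$ for which $\eta \leq r/2$ and both error factors remain bounded. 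A further subtlety is the extraction of the sharp leading constant in the upper bound: the naive Euclidean submean inequality loses a spurious factor of $e$, so one must use Berndtsson's Gaussian-weighted version, which is where most of the technical care must be exercised.
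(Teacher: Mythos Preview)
Your proposal is correct and follows essentially the same route as the paper: both arguments implement Berndtsson's local Gaussian model at scale $r_p=p^{-3/8}$, obtain the upper bound via the weighted sub-averaging inequality $|s(x)|^2\leq\big(\int e^{-2p\lambda_x|z-x|^2}\big)^{-1}\int|s|^2e^{-2p\lambda_x|z-x|^2}$, and the lower bound via a cutoff peak section corrected by Demailly's $L^2$ $\db$-estimate, with the same scale balancing producing the $p^{-1/8}r^{-\alpha_j}$ and $p^{-3/8}r^{-\beta_j}$ error terms. The only cosmetic differences are that the paper uses the triangle inequality $\|S\|_p\leq\|F\|_p+\|G\|_p$ rather than your orthogonality identity $\|S\|_p^2=\|\sigma\|_p^2-\|v\|_p^2$ (which requires the minimal solution), and bounds $|G(x)|_{h_p}$ by reusing the sharp upper estimate rather than the crude submean inequality; neither affects the outcome.
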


\par We consider next Bergman kernels for powers of the canonical bundle
of a punctured Riemann surface. In addition to the setting in (A) 
let us consider the following condition:

\smallskip

(B$^\prime$) $\omega$ is a smooth Hermitian metric on $X\setminus\Sigma$ 
such that $\omega\geq c\,\Omega$, where $c:X\to(0,\infty)$ is a continuous function
and $\ric_\omega\leq-\lambda\omega$ on $X\setminus\Sigma$, for some constant $\lambda>0$.

\smallskip

The Hermitian metric $\omega$ induces a Hermitian metric $h^{K_X}$ on 
$K_X|_{X\setminus\Sigma}$\,.
We denote by $h_p$ the metric induced by $h^{K_X}$ on $K_X^p$ and by
$H^0_{(2)}(X\setminus\Sigma,K_X^p)$ the space of holomorphic sections
of $K_X^p|_{X\setminus\Sigma}$ which are $L^2$ with respect to the metrics $h_p$ and volume
form $\omega$, cf.\ \eqref{e:bs}. 

As before, let $z:U_{x_j}\to\C$ be a local holomorphic coordinate, with respect to which
the metric $\omega$ has the form \eqref{e:rho}. 
We define the weight $\varphi_j$ on $B(x_j,2R_j)\setminus\{x_j\}$ of $h^{K_X}$ by
\begin{equation}\label{e:dz}
e^{-2\varphi_j}=\vert dz\vert^2_\omega\,,\:\:\varphi_j=\frac12\Big(\log\rho_j-\log2\Big),
\end{equation}
and we further write $\varphi_j$ as in \eqref{e:varphi}. 
\begin{Theorem}\label{T:mt2} Let $(X,\Sigma, \omega)$ verify 
assumptions (A) and (B$^\prime$). 
Assume that $\omega$ satisfies \eqref{e:dpsi} and \eqref{e:drho}. Let $P_p$ be
the Bergman kernel function of $H^0_{(2)}(X\setminus\Sigma,K_X^p)$.
Then there exists a constant $C_j>1$ such that if $x\in B(x_j,R_j)\setminus\{x_j\}$ and 
\begin{equation}\label{e:p1}
p>C_j\dist(x,x_j)^{-\delta_j}
\end{equation}
then
\begin{equation}\label{e:Bexp001}
\left|\frac{P_p(x)}{p}\left(-\frac{2\pi}{R_{\omega}(x)}\right)-1\right|
\leq C_j\Big(p^{-1/8}\dist(x,x_j)^{-\alpha_j}+p^{-3/8}\dist(x,x_j)^{-\beta_j}\Big),
\end{equation}
where $R_{\omega}$ is the Gauss curvature of $\omega$.
\end{Theorem}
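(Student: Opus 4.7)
The plan is to identify Theorem~\ref{T:mt2} as the ``canonical bundle version'' of Theorem~\ref{T:mt1}, in direct analogy with the dichotomy between \eqref{e:c} and \eqref{e:c1} in the introduction, and to adapt the proof of Theorem~\ref{T:mt1} to this setting.

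First I translate hypothesis (B$'$) into the framework of (A)--(C). From \eqref{e:dz} the local weight of $h^{K_X}$ is $\varphi_j=\tfrac12\log(\rho_j/2)$, and a direct computation on a Riemann surface gives
\begin{equation*}
c_1(K_X,h^{K_X})=\frac{i}{\pi}\partial\bar\partial\varphi_j=-\frac{1}{2\pi}\ric_\omega=-\frac{R_\omega}{2\pi}\,\omega,
\end{equation*}
using $\ric_\omega=R_\omega\omega$. Hypothesis (B$'$) yields $c_1(K_X,h^{K_X})\geq(\lambda/2\pi)\omega$, which is the analogue of condition (C), and produces the ratio $\omega_x/c_1(K_X,h^{K_X})_x=-2\pi/R_\omega(x)$ appearing in \eqref{e:Bexp001}. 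Writing $\varphi_j=\nu_j\log|z|+\psi_j$ as in \eqref{e:varphi}, the smooth part $\psi_j$ is (up to a constant) $\tfrac12\log(\rho_j/|z|^{2\nu_j})$, so \eqref{e:dpsi} is imposed directly on this $\psi_j$, while \eqref{e:drho} controls $D\rho_j$ and enters the error exponent $\beta_j$ precisely as in Theorem~\ref{T:mt1}.

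The principal issue is that (B$'$) provides no lower bound on $\ric_\omega$, so the Bochner-type step from the proof of Theorem~\ref{T:mt1} (where the Ricci lower bound in (B) enters H\"ormander's $L^2$-estimate for $\bar\partial$) must be replaced. The standard remedy is to view a section of $K_X^p$ as a $(1,0)$-form with values in $K_X^{p-1}$ and apply the Bochner-Kodaira-Nakano formula in that form: for top-degree forms the Ricci term drops out and only the positivity $c_1(K_X^{p-1},h^{K_X^{p-1}})\geq\tfrac{(p-1)\lambda}{2\pi}\omega$ is needed, exactly as in the passage from \eqref{e:c} to \eqref{e:c1} in \cite{MM07,MM08,HsM11}. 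Once this substitution is made, H\"ormander-type $L^2$-solvability of $\bar\partial$ is available, peak sections can be constructed in $B(x_j,R_j)\setminus\{x_j\}$ with the same quantitative control, and Berndtsson's sub-mean-value upper bound on $P_p(x)$ goes through unchanged, since it uses only positivity of $c_1(L,h)$. The localization and rescaling then produce the estimate \eqref{e:Bexp001} with the same exponents $\delta_j$, $p^{-1/8}$, $p^{-3/8}$.

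The main obstacle is bookkeeping: checking that the Kodaira-Nakano substitution preserves the quantitative constants in the scaling step of Theorem~\ref{T:mt1} (in particular that the factor $p-1$ in $c_1(K_X^{p-1})$, rather than $p$, causes no degradation of the leading asymptotic, and that the distance exponents $\alpha_j,\beta_j$ appearing in \eqref{e:Bexp001} are untouched). I expect this to reduce to inspecting each occurrence of the Ricci lower bound in the proof of Theorem~\ref{T:mt1} and replacing it with the top-degree variant above.
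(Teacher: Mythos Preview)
Your proposal is correct and follows essentially the same route as the paper. The paper simply observes that $L=K_X$, replaces the $\overline\partial$-estimate of Theorem~\ref{T:db} by the canonical-bundle version Theorem~\ref{T:db1} (whose proof is precisely the top-degree Bochner--Kodaira--Nakano trick you describe, yielding the constant $\frac{1}{(p-1)\lambda}$), and identifies the leading coefficient via $c_1(K_X,h^{K_X})=(-R_\omega/2\pi)\,\omega$; your bookkeeping concern about $p-1$ versus $p$ is harmless since the resulting factor is absorbed into the constant $C_j$.
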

In \cite{AMM} we considered the particular situation of Theorem \ref{T:mt2}
when the metric $\omega$ equals the Poincar\'e metric near the
punctures (hence $\alpha_j=\beta_j=3$, cf.\ Lemma \ref{L:abP})
and obtain estimates which are different in nature as 
those in Theorem \ref{T:mt2}. 

See Corollaries \ref{cor:disc1}, \ref{cor:par}, \ref{cor:par2} for applications
of Theorem \ref{T:mt2}.

\medskip

\par Theorem \ref{T:mt1} has the following interesting corollary which gives a uniform estimate 
on the Bergman kernel $P_p(x)$ in the regime where the distance from $x$ to $\Sigma$ 
decreases slower than some power of $1/p$. 

\begin{Corollary}\label{C:mt1} In the setting of Theorems \ref{T:mt1} or \ref{T:mt2}, 
there exists a constant $C_j>1$ such that if $\eta\in[0,1]$, $p>C_j$, 
and $x\in B(x_j,R_j)$ satisfies $\dist(x,x_j)>(C_j/p)^{\eta/\delta_j}$, then 
$$\Big|\frac{P_p(x)}{p}\,\frac{\omega_x}{c_1(L,h)_x}-1\Big|\leq 2C_j\,p^{-(1-\eta)/8}.$$
\end{Corollary}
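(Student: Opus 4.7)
The plan is to reduce the corollary to a direct application of Theorem \ref{T:mt1} (or Theorem \ref{T:mt2}), followed by an elementary estimate on the two error terms using the defining inequalities for $\delta_j$ in \eqref{e:delta}.

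First I would verify that the hypothesis of the corollary puts us inside the range where Theorem \ref{T:mt1} applies. Raising both sides of $\dist(x,x_j) > (C_j/p)^{\eta/\delta_j}$ to the power $-\delta_j$ gives $\dist(x,x_j)^{-\delta_j} < p/C_j$, i.e.\ $p > C_j\dist(x,x_j)^{-\delta_j}$, which is exactly condition \eqref{e:p} (respectively \eqref{e:p1}). Consequently the estimate \eqref{e:Bexp00} (respectively \eqref{e:Bexp001}) yields
$$\Big|\frac{P_p(x)}{p}\,\frac{\omega_x}{c_1(L,h)_x}-1\Big|\leq C_j\bigl(p^{-1/8}\dist(x,x_j)^{-\alpha_j}+p^{-3/8}\dist(x,x_j)^{-\beta_j}\bigr).$$

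Next I would estimate the two terms on the right using the hypothesis $\dist(x,x_j) > (C_j/p)^{\eta/\delta_j}$, which gives $\dist(x,x_j)^{-\gamma} \leq (p/C_j)^{\eta\gamma/\delta_j} \leq p^{\eta\gamma/\delta_j}$ for any $\gamma \geq 0$, provided $C_j \geq 1$. By the very definition $\delta_j=\max\{8/3,8\beta_j/3,8\alpha_j\}$, we have $\alpha_j/\delta_j\leq 1/8$ and $\beta_j/\delta_j\leq 3/8$, so that
$$p^{-1/8}\dist(x,x_j)^{-\alpha_j}\leq p^{-(1-\eta)/8},\qquad p^{-3/8}\dist(x,x_j)^{-\beta_j}\leq p^{-3(1-\eta)/8}\leq p^{-(1-\eta)/8},$$
where the last inequality uses $p\geq 1$ and $\eta\in[0,1]$. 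Summing the two bounds yields the desired $2C_j p^{-(1-\eta)/8}$.

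This is really a transparent consequence of the two main theorems and the specific choice of $\delta_j$, which was engineered precisely so that the two powers $p^{-1/8}\dist^{-\alpha_j}$ and $p^{-3/8}\dist^{-\beta_j}$ become comparable, of order $p^{-(1-\eta)/8}$, on the boundary $\dist(x,x_j) \sim (C_j/p)^{\eta/\delta_j}$. There is no substantive obstacle; the only bookkeeping issue is that $C_j$ may need to be enlarged beyond the constants supplied by Theorems \ref{T:mt1}--\ref{T:mt2} so as to ensure $C_j\geq 1$ and $p>C_j$, which keeps the factor $p/C_j$ positive and allows the inequality $(p/C_j)^{\eta\gamma/\delta_j}\leq p^{\eta\gamma/\delta_j}$ to hold.
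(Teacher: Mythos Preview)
Your approach is exactly the paper's, and the argument is correct in substance. One small slip: raising $\dist(x,x_j)>(C_j/p)^{\eta/\delta_j}$ to the power $-\delta_j$ yields $\dist(x,x_j)^{-\delta_j}<(p/C_j)^{\eta}$, not $p/C_j$ directly; you then need $\eta\leq1$ and $p>C_j$ to conclude $(p/C_j)^{\eta}\leq p/C_j$, which gives \eqref{e:p}. The paper handles this by first observing $(C_j/p)^{\eta/\delta_j}\geq(C_j/p)^{1/\delta_j}$ before inverting. Otherwise your bookkeeping matches the paper line for line.
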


\par The paper is organized as follows. In Section \ref{S:prelim} 
we recall some facts about singular Hermitian metrics 
on holomorphic line bundles and 
the solution of the $\overline\partial$ equation.
Section \ref{S:pfmt} 
is devoted to the proof of the main results announced in the Introduction.
In Section \ref{S:appl} we apply these results to interesting metrics 
for which the parameters $\alpha_j$ and $\beta_j$ can be explicitly given (metrics with
logarithmic, cuspidal and conical singularities). 
In Section \ref{S:rugby} we calculate the Bergman kernel
of the Riemann sphere with two conical singularities
and study its asymptotics near the singularities.

\smallskip

\noindent
\textbf{\textit{Acknowledgements.}} We would like to thank T. Can and P. Wiegmann for useful discussions.


\section{Preliminaries}\label{S:prelim} 
\subsection{Singular Hermitian holomorphic line bundles}\label{SS:lb} 
Let $L$ be a holomorphic line bundle on a complex manifold $Y$. 
The notion of singular Hermitian metric $h$ on $L$ is defined as follows 
(see \cite{D90}, \cite[p.\,97]{MM07}): if $e_\ell$ is a holomorphic frame of $L$ 
over an open set $U_\ell\subset Y$ then $|e_\ell|^2_h=e^{-2\varphi_\ell}$, 
where $\varphi_\ell\in L^1_{loc}(U_\ell)$. 
If $g_{\ell k}=e_k/e_\ell\in\mathcal O^*_Y(U_\ell\cap U_k)$ are the transition functions of $L$, 
then $\varphi_\ell=\varphi_k+\log|g_{\ell k}|$. The curvature current $c_1(L,h)$ of $h$
 is the current of bidegree $(1,1)$ on $Y$ defined by $c_1(L,h)=dd^c\varphi_\ell$ on $U_\ell$, where $d^c=\frac{1}{2\pi i}(\partial-\overline\partial)$. 
 If $c_1(L,h)\geq0$ then the weight $\varphi_\ell$ is plurisubharmonic on $U_\ell$. 
 When $Y$ is compact then the space $H^0(Y,L)$ of global holomorphic sections of $L$ is finite dimensional.
 
\par Let now $(X,\Sigma,\omega,L,h)$ be as in Theorem \ref{T:mt1} and $P_p$ be the Bergman kernel functions
of the spaces $H^0_{(2)}(X\setminus\Sigma,L^p)$ defined in \eqref{e:BFS1}. 
Then for all $x\in X\setminus\Sigma$,
\begin{equation}\label{e:Pvar}
P_p(x)=\max\Big\{|S(x)|^2_{h_p}:\,S\in H^0_{(2)}(X\setminus\Sigma,L^p),\;\|S\|_p=1\Big\}.
\end{equation}
Let $(Y,J,\omega)$ be a K\"ahler manifold, where $J$ is the complex structure of $Y$ 
and let $g^{TY}$ be the Riemannian metric associated to 
$\omega$ by $g^{TY}(u,v)=\omega(u,Jv)$ for all $u,v\in T_xY$, $x\in Y$. 
Let $\ric$ be the Ricci curvature of $g^{TY}$. The Ricci form $\ric_\omega$ is defined as the 
$(1,1)$-form associated to $\ric$ by
\begin{equation}\label{e:ric1}
\ric_\omega(u,v)=\ric(Ju,v)\,,\quad\text{for any $u,v\in T_{x}Y$, $x\in Y$}.
\end{equation}
The volume form $\omega^n$ induces a metric $h^{K^*_Y}_\omega$ on $K^*_Y$, 
whose dual metric on $K_Y$ is denoted by $h^{K_Y}_\omega$.
For simplicity we denote by $h_p:=(h^{K_Y}_\omega)^{\otimes p}$ the induced metric on $K_Y^p$.
Since the metric $g^{TY}$ is K\"ahler, we have (see e.\,g.\ \cite[Problem\,1.7]{MM07})
\begin{equation}\label{e:ric2}
\ric_\omega=iR^{K_{Y}^{*}}=-iR^{K_{Y}}=-2\pi c_1(K_Y,h^{K_Y})\,.
\end{equation}
Let us consider now the case of dimension $n=1$. 
The canonical bundle of $Y$ is just $K_Y=T^{(1,0)*}Y$ and
$K^*_Y=T^{(1,0)}Y$, moreover the metric $h^{K^*_Y}_\omega$ on $K^*_Y$ 
is directly given by $\omega$.  In local holomorphic coordinates $z:U\to\C$ we write
$\omega(z)=\frac{i}{2}\,\rho(z)\,dz\wedge d\overline z$, so $g^{TY}(z)=\rho(z)|dz|^2$. 
The Gauss curvature of $g^{TY}$ (and by a slight abuse, of $\omega$) is defined by
\begin{equation}\label{e:gauss}
R_\omega=-\frac{\:2\:}{\:\rho\:}\,\frac{\partial^2}{\partial z\partial\overline{z}}\log\rho.
\end{equation}  
Since the metric $h^{K^*_Y}_\omega$ on $K^*_Y$ 
is directly given by $\omega$, we have that $\partial/\partial z$ is a frame
of $K^*_Y$, $dz$ is the dual frame of $K_Y$ on $U$ and
\begin{equation}\label{e:dz1}
\Big\vert\frac{\partial}{\partial z}\Big\vert^2_\omega=\frac{\rho(z)}{2}\,,\:\:
\vert dz\vert^2_\omega=\frac{2}{\rho(z)}\,\cdot
\end{equation}
The weight $\varphi$ of $h^{K_Y}_\omega$ on $U$  
is given by
\begin{equation}\label{e:dz2}
e^{-2\varphi}=\vert dz\vert^2_\omega\,,\:\:\varphi=\frac12\Big(\log\rho-\log2\Big),
\end{equation}
hence
\begin{equation}\label{e:ric3}
\ric_\omega=-2\pi c_1(K_Y,h^{K_Y})=-2\pi dd^c\varphi=-\pi dd^c\log\rho=R_\omega\,\omega\,.
\end{equation}
In local normal
coordinates associated with $\omega$ near an arbitrary point $x_{0}\in Y$ we have
$\omega|_{x_{0}}= \frac{i}{2}dz\wedge d \overline{z}$
and the scalar curvature $r_\omega$
of $(Y, \omega)$ is given at $x_{0}$ by
\begin{align}\label{eq:mm3.3}
r_\omega= 4 R^{T^{(1,0)}Y}\Big(
\frac{\partial}{\partial z}, \frac{\partial}{\partial \overline{z}}\Big).
\end{align}
Thus
\begin{equation}\label{eq:mm3.2}
-\frac{i}{2} r_\omega \, \omega= R^{T^{(1,0)}Y}
= -R^{K_{Y}}
=\overline{\partial} \partial \log |\sigma|^2,
\end{equation}
where $\sigma$ is a local holomorphic frame of $T^{(1,0)}Y$.
From \eqref{e:ric3} and \eqref{eq:mm3.2} we deduce the relation between
scalar and Gaussian curvature,
\begin{equation}\label{e:ricscal}
r_\omega=2R_\omega\,.
\end{equation}

\subsection{$L^2$-estimates for $\db$}\label{SS:db} 
The following version of Demailly's estimates for the $\db$ operator 
\cite[Th\'eor\`eme 4.1]{D82} will be needed in our proofs (see also \cite[Theorem 2.5]{CMM14}).
\begin{Theorem}[{\cite{D82}}]\label{T:db} Let $Y$ be a complete 
K\"ahler manifold, $\dim Y=n$, and let $\omega$ be a K\"ahler form of class $\cC^2$ 
on $Y$ (not necessarily complete) such that its Ricci form $\ric_\omega\geq-2\pi B\omega$ on $Y$, 
for some constant $B>0$. 
Let $(L,h)$ be a Hermitian holomorphic line bundle on $Y$ such that $h$ is of class $\cC^2$ and
$c_1(L,h)\geq2\varepsilon\omega$. If $p\geq B/\varepsilon$ and $g\in L^2_{0,1}(Y,L^p,loc)$ verifies
\[
\db g=0\:\:\text{and}\:\:\int_Y|g|^2_{h_p}\,\omega^n<\infty
\]
then there exists $u\in L^2_{0,0}(Y,L^p,loc)$ such that
\[
\db u=g\:\:\text{and}\:\:\int_Y|u|^2_{h_p}\,\omega^n\leq\frac{1}{p\varepsilon}\,
\int_Y|g|^2_{h_p}\,\omega^n\,.
\] 
\end{Theorem}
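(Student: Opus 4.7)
The plan is to apply the classical H\"ormander--Andreotti--Vesentini--Demailly $L^2$-method, leveraging the canonical isomorphism $\Omega^{0,1}(Y,L^p)\cong \Omega^{n,1}(Y,L^p\otimes K_Y^*)$ to recast the problem as one of solving $\db$ with values in the twisted line bundle $F:=L^p\otimes K_Y^*$, endowed with the induced metric $h_F:=h_p\otimes h^{K_Y^*}_\omega$. The Chern curvature of $(F,h_F)$ combines the large positivity coming from $L^p$ with the correction from the Ricci form: using \eqref{e:ric2}, one has
\[
iR^F=2\pi p\,c_1(L,h)+\ric_\omega\geq (4\pi p\varepsilon-2\pi B)\omega,
\]
which under the hypothesis $p\geq B/\varepsilon$ is bounded below by a positive multiple of $p\varepsilon\,\omega$.

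The second step is the a priori estimate via the Bochner--Kodaira--Nakano identity. For any smooth compactly supported $F$-valued $(n,1)$-form $v$ on a complete K\"ahler manifold, Nakano's inequality
\[
\|\db v\|^2+\|\db^*_\omega v\|^2\geq \int_Y\big\langle [iR^F,\Lambda]\,v,v\big\rangle_{h_F}\,\omega^n
\]
combined with the fact that $[iR^F,\Lambda]$ acts on $(n,1)$-forms by the eigenvalues of $iR^F$ with respect to $\omega$ yields a uniform pointwise lower bound of order $p\varepsilon\,|v|^2_{h_F}$. A density argument in the graph norm of $\db\oplus\db^*_\omega$---legitimate because the ambient K\"ahler metric is complete---together with the closed-range theorem applied to $\db$ then produces, for every $\db$-closed $g\in L^2_{0,1}(Y,L^p)$, a solution $u$ of $\db u=g$ satisfying
\[
\int_Y|u|^2_{h_p}\,\omega^n\leq \frac{1}{p\varepsilon}\int_Y|g|^2_{h_p}\,\omega^n.
\]

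The main obstacle is that $\omega$ itself is only of class $\cC^2$ and need not be complete; completeness is postulated only of the manifold $Y$, which therefore carries \emph{some} other complete K\"ahler metric, say $\omega_0$. To bridge this gap, I would introduce the regularized family $\omega_\eta:=\omega+\eta\,\omega_0$ for $\eta\in(0,1]$. Each $\omega_\eta$ is complete K\"ahler, and the hypotheses $c_1(L,h)\geq 2\varepsilon\omega$ and $\ric_\omega\geq -2\pi B\omega$ degrade by only $O(\eta)$ terms when recast in terms of $\omega_\eta$ on any fixed compact subset, so the complete-case estimate furnishes solutions $u_\eta$ of $\db u_\eta=g$ with $L^2$-bounds uniform up to an $o(1)$ loss as $\eta\to 0^+$. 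A standard weak compactness argument in $L^2_{loc}$, combined with lower semicontinuity of the $L^2$-norm under weak convergence and monotone convergence of $\omega_\eta^n\to \omega^n$, extracts a subsequential weak limit $u$ that solves $\db u=g$ and inherits the sharp estimate. The delicate accounting occurs in this limiting step: one must verify that the positivity constants are preserved, and that the auxiliary $K_Y^*$-twisting---whose metric itself depends on $\omega_\eta$---interacts correctly with the $L^2$-norms as $\eta\to 0^+$.
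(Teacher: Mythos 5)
First, a point of reference: the paper does not prove this statement at all --- it is quoted from Demailly \cite[Th\'eor\`eme 4.1]{D82} (see also \cite[Theorem 2.5]{CMM14}), so there is no internal proof to compare against. Your sketch follows the architecture of Demailly's own argument: twist to bidegree $(n,1)$ so that the Ricci form enters through $K_Y^*$, compute $iR^{L^p\otimes K_Y^*}=2\pi p\,c_1(L,h)+\ric_\omega\geq(4\pi p\varepsilon-2\pi B)\,\omega\geq 2\pi p\varepsilon\,\omega$ for $p\geq B/\varepsilon$ (which in fact yields the constant $\tfrac{1}{2\pi p\varepsilon}$, comfortably stronger than the stated $\tfrac{1}{p\varepsilon}$), apply Bochner--Kodaira--Nakano with a density argument in the graph norm, and regularize $\omega_\eta=\omega+\eta\,\omega_0$ by an auxiliary complete K\"ahler metric. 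All of that is correct and is the standard route.

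The gap is in the limiting step, which you rightly flag as delicate but then justify by an argument that does not work. The claim that the hypotheses ``degrade by only $O(\eta)$ terms when recast in terms of $\omega_\eta$ on any fixed compact subset'' cannot produce the estimate: the a priori inequality must hold globally on $Y$, and the complete metric $\omega_0$ need not be dominated by $\omega$ near infinity, so $c_1(L,h)\geq2\varepsilon\omega$ yields no lower bound of the form $c\,\omega_\eta$ with $c$ bounded away from $0$ uniformly on $Y$. The actual mechanism --- and the very reason one must work in bidegree $(n,q)$ rather than $(0,q)$ --- is Demailly's monotonicity lemma: if $\gamma\geq\omega$ are two K\"ahler forms, then for every $(n,q)$-form $v$ with $q\geq1$ one has $|v|^2_\gamma\,dV_\gamma\leq|v|^2_\omega\,dV_\omega$ and $\langle[iR^F,\Lambda_\gamma]^{-1}v,v\rangle_\gamma\,dV_\gamma\leq\langle[iR^F,\Lambda_\omega]^{-1}v,v\rangle_\omega\,dV_\omega$. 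Applied with $\gamma=\omega_\eta$, this gives the bound $\int_Y|u_\eta|^2\,\omega^n\leq\int_Y|u_\eta|^2\,\omega_\eta^n\leq\frac{1}{2\pi p\varepsilon}\int_Y|g|^2_{h_p}\,\omega^n$ with no loss at all as $\eta\to0^+$. Relatedly, you should keep the metric on the $K_Y^*$ factor fixed (the one induced by $\omega$), so that the curvature of the twisted bundle does not move during the regularization; only the base metric used to measure forms and volumes varies with $\eta$. With these two corrections the argument closes exactly as you describe, by weak compactness and lower semicontinuity of the norm.
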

We also need the following version for powers of the canonical bundle.
\begin{Theorem}\label{T:db1} 
Let $Y$ be a complete K\"ahler manifold, $\dim Y=n$, and let $\omega$ be a K\"ahler 
form on $Y$ (not necessarily complete) such that its Ricci form satisfies
$\ric_\omega\leq-\lambda\omega$ on $Y$, for some constant $\lambda>0$. 
If $p\geq2$ and 
$g\in L^2_{0,1}(Y,K_Y^p,loc)$ verifies 
\[
\db g=0\:\:\text{and}\:\:\int_Y|g|^2_{h_p}\,\omega^n<\infty
\] 
then there exists $u\in L^2_{0,0}(Y,K_Y^p,loc)$ such that 
\[
\db u=g\:\:\text{and}\:\:\int_Y|u|^2_{h_p}\,\omega^n\leq\frac{1}{(p-1)\lambda}\,
\int_Y|g|^2_{h_p}\,\omega^n\,.
\] 
\end{Theorem}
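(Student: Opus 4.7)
The plan is to reduce Theorem~\ref{T:db1} to the classical Nakano-type $L^2$-estimate for $(n,1)$-forms with values in a Nakano-positive Hermitian bundle (which is what underlies Theorem~\ref{T:db} as well). First, I would exploit the canonical identification
$$
\Lambda^{0,q}T^*Y\otimes K_Y^p\;\xrightarrow{\sim}\;\Lambda^{n,q}T^*Y\otimes K_Y^{p-1},
$$
induced by $K_Y^p\cong K_Y\otimes K_Y^{p-1}$ together with $K_Y\otimes\Lambda^{0,q}T^*Y\cong\Lambda^{n,q}T^*Y$. With respect to the metrics induced by $\omega$ and $h_p$ this map is a pointwise isometry and intertwines $\db$. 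Hence the given $g\in L^2_{0,1}(Y,K_Y^p,loc)$ corresponds to a $\db$-closed form $\wi g\in L^2_{n,1}(Y,K_Y^{p-1})$, and finding $u\in L^2_{0,0}(Y,K_Y^p,loc)$ with $\db u=g$ is equivalent to finding $\wi u\in L^2_{n,0}(Y,K_Y^{p-1})$ with $\db\wi u=\wi g$, with equal $L^2$-norms.

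Next, the hypothesis on $\ric_\omega$ converts into Nakano positivity of $K_Y^{p-1}$. Using~\eqref{e:ric2},
$$
iR^{K_Y^{p-1}}=(p-1)\,iR^{K_Y}=-(p-1)\ric_\omega\ge(p-1)\lambda\,\omega\ge\lambda\,\omega,
$$
since $\ric_\omega\le-\lambda\omega$ and $p\ge 2$. Thus the Hermitian bundle $K_Y^{p-1}$ (with the metric induced by $\omega$) is Nakano-positive with constant $(p-1)\lambda>0$.

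I would then apply the Bochner--Kodaira--Nakano inequality on the complete K\"ahler manifold $(Y,\omega)$: on $(n,1)$-forms with values in $K_Y^{p-1}$, the curvature operator $[iR^{K_Y^{p-1}},\Lambda]$ is bounded below by $(p-1)\lambda\,\mathrm{Id}$. The standard H\"ormander--Andreotti--Vesentini density/cut-off procedure (which is where the completeness of $\omega$ enters) then produces $\wi u\in L^2_{n,0}(Y,K_Y^{p-1})$ with $\db\wi u=\wi g$ satisfying
$$
\int_Y|\wi u|^2\,\omega^n\le\frac{1}{(p-1)\lambda}\int_Y|\wi g|^2\,\omega^n.
$$
Transporting $\wi u$ back through the isomorphism of the first paragraph yields the desired section $u$ of $K_Y^p$ and the stated estimate.

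The principal technical nuisance lies in the first step: one must verify that the identification of $K_Y^p$-valued $(0,q)$-forms with $K_Y^{p-1}$-valued $(n,q)$-forms is genuinely an isometry of the induced Hermitian structures, with no stray combinatorial factors, and that it commutes with $\db$. This is a routine calculation in local holomorphic coordinates, using that $h_p$ factors compatibly under $K_Y^p=K_Y\otimes K_Y^{p-1}$ and that $\db$ acts trivially on the local holomorphic frame $dz^1\wedge\cdots\wedge dz^n$ of $K_Y$. Once this bookkeeping is in place, Theorem~\ref{T:db1} is strictly parallel to Theorem~\ref{T:db}; in fact the canonical-bundle formulation is simpler, because the assumption $\ric_\omega\le-\lambda\omega$ supplies the required Nakano positivity directly, with no separate line-bundle curvature hypothesis needed.
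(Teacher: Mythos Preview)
The paper does not supply its own proof of Theorem~\ref{T:db1}; it is stated as a preliminary companion to Demailly's estimate (Theorem~\ref{T:db}). Your reduction---identify $K_Y^p$-valued $(0,q)$-forms with $K_Y^{p-1}$-valued $(n,q)$-forms, then observe via~\eqref{e:ric2} that $iR^{K_Y^{p-1}}=-(p-1)\ric_\omega\geq(p-1)\lambda\,\omega$---is exactly the standard route and the curvature computation is correct.

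There is, however, one slip you should fix. You write ``apply the Bochner--Kodaira--Nakano inequality on the complete K\"ahler manifold $(Y,\omega)$'' and say that completeness of $\omega$ is where the cut-off procedure enters. But the hypothesis explicitly allows $\omega$ to be \emph{incomplete}: $Y$ is assumed to carry \emph{some} complete K\"ahler metric, while $\omega$ is the (possibly incomplete) form with respect to which the norms and the Ricci bound are taken. The H\"ormander--Andreotti--Vesentini density argument therefore does not apply to $\omega$ directly. What is needed is precisely Demailly's two-metric technique from \cite[Th\'eor\`eme~4.1, Th\'eor\`eme~5.1]{D82}: approximate $\omega$ by the complete forms $\omega_\varepsilon=\omega+\varepsilon\widehat\omega$ (with $\widehat\omega$ complete K\"ahler), solve $\db$ for each $\omega_\varepsilon$, and use the monotonicity $|\,\cdot\,|^2_{\omega_\varepsilon}\,\omega_\varepsilon^n\leq|\,\cdot\,|^2_{\omega}\,\omega^n$ on $(n,q)$-forms to pass to the limit $\varepsilon\to0$. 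You do say at the end that the argument is ``strictly parallel to Theorem~\ref{T:db}'', which is the right instinct; just make sure your write-up invokes that approximation step rather than assuming $(Y,\omega)$ is itself complete.
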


\section{Proof of main results}\label{S:pfmt}

In this Section we prove Theorems \ref{T:mt1} and \ref{T:mt2} together
with Corollary \ref{C:mt1}. We then give a semi-global version of these
results (Theorem \ref{T:mt3}).


\begin{proof}[Proof of Theorem \ref{T:mt1}] We use methods from \cite[Section 2]{Be03} 
(see also \cite[Theorem 1.3]{CMM14}), and divide the proof into three steps. Recall the definition \eqref{e:Rj} of $R_j$. 

\medskip

{\em Step 1.} Given $x\in B(x_j,R_j)\setminus\{x_j\}$ we  estimate the growth of the functions 
$\rho_j$ and $\psi_j$ defined in \eqref{e:rho}, resp. \eqref{e:varphi}. 

\par Note that since $\omega\geq c\,\Omega$ we have 
\begin{equation}\label{e:c0}
\rho_j(z)\geq c_0,\;\forall\,z\in B(x_j,2R_j)\setminus\{x_j\},
\end{equation}
for some constant $c_0>0$. Let $x\in B(x_j,R_j)$, $r<|x|/2$, and set 
$$M_j(x,r)=\max\{\rho_j(z):\,|z-x|\leq r\}\,,\;m_j(x,r)=\min\{\rho_j(z):\,|z-x|\leq r\}.$$
Hence 
\begin{equation}\label{e:omega}
m_j(x,r)\,dm(z)\leq\omega(z)\leq M_j(x,r)\,dm(z)\,\text{ on } B(x,r).
\end{equation}
Since $r<|x|/2$ we obtain by \eqref{e:drho}
$$|\rho_j(z)-\rho_j(x)|\leq\frac{A'_jr}{(|x|-r)^{\beta_j}}\leq\frac{2^{\beta_j}A'_jr}{|x|^{\beta_j}}\,,\;z\in B(x,r).$$
Therefore using \eqref{e:c0} we get 
\begin{eqnarray*}
&&M_j(x,r)\leq\rho_j(x)+\frac{2^{\beta_j} A'_jr}{|x|^{\beta_j}}\leq\rho_j(x)
\Big(1+\frac{2^{\beta_j}A'_jr}{c_0|x|^{\beta_j}}\Big),\\
&&m_j(x,r)\geq\rho_j(x)-\frac{2^{\beta_j} A'_jr}{|x|^{\beta_j}}\geq\rho_j(x)
\Big(1-\frac{2^{\beta_j} A'_jr}{c_0|x|^{\beta_j}}\Big).
\end{eqnarray*}
If $r<|x|/4$ and $r<c_0|x|^{\beta_j}/(2^{\beta_j+2}A'_j)$ these estimates yield
\begin{equation}\label{e:M/m}
\frac{M_j(x,2r)}{m_j(x,r)}\leq\frac{1+\dfrac{2^{\beta_j+1}A'_jr}{c_0|x|^{\beta_j}}}
{1-\dfrac{2^\beta_j A'_jr}{c_0|x|^{\beta_j}}}\leq1+\frac{C'_1r}{|x|^{\beta_j}}\,,
\end{equation}
with some constant $C'_1>0$. Note that \eqref{e:M/m} 
holds also with $\dfrac{\rho_j(x)}{m_j(x,r)}$ and $\dfrac{M_j(x,2r)}{\rho_j(x)}$ 
in place of $\dfrac{M_j(x,2r)}{m_j(x,r)}$, since the first two quantities are bounded above by the third. 

\par We next turn our attention to the weight $\varphi_j$ of the metric $h$ corresponding to the local holomorphic frame $e_j$ of $L$ on $B(x_j,2R_j)$ (see \eqref{e:varphi}). Using the Taylor expansion of order 2 of $\psi_j$ at $x$ on $B(x,|x|)$ we can write 
$$\varphi_j(z)=\nu_j\log|z|+\re f_j(z)+\lambda_x|z-x|^2+\widetilde\psi_j(z),$$
where $f_j$ is a holomorphic polynomial, $\widetilde\psi_j$ vanishes to order 3 at $x$, and if $r<|x|/2$ we have by \eqref{e:dpsi} that 
\begin{equation}\label{e:tpsi}
\max\big\{|\widetilde\psi_j(z)|:\,z\in B(x,r)\big\}\leq\frac{A_jr^3}{(|x|-r)^{\alpha_j}}\leq\frac{2^{\alpha_j}A_jr^3}{|x|^{\alpha_j}}\,.
\end{equation}
Since $c_1(L,h)_x\geq2\varepsilon\omega_x$ it follows by \eqref{e:c0} that
\begin{equation}\label{e:lambdax}
\lambda_x\geq\pi\varepsilon\rho_j(x)\geq\pi\varepsilon c_0.
\end{equation}
Note that the function $\log|z|$ is harmonic on the disc $B(x,|x|)$. Hence there exists a holomorphic function $F_j(z)$ on $B(x,|x|)$ such that $\varphi_j(z)=\re F_j(z)+\lambda_x|z-x|^2+\widetilde\psi_j(z)$. Consider the holomorphic frame $e_x=e^{F_j}e_j$ of $L$ on $B(x,|x|)$, so 
\begin{equation}\label{e:tvarphi}
\widetilde\varphi_j(z)=-\log|e_x(z)|_h=\varphi_j(z)-\re F_j(z)=\lambda_x|z-x|^2+\widetilde\psi_j(z)
\end{equation}
is the corresponding weight of $h$. Note that $\widetilde\varphi_j(x)=0$. 

\par We conclude Step 1 by introducing the following function which will be needed in the sequel:
$$E(r):=\int_{|\xi|\leq r}e^{-2|\xi|^2}\,dm(\xi)=\frac{\pi}{2}\,\left(1-e^{-2r^2}\right),$$ 
where $\,dm$ is the Lebesgue measure on $\C$. If $r\geq\delta>0$ then 
\begin{equation}\;\label{e:E(r)}
\frac{\pi}{2E(r)}=1+\frac{e^{-2r^2}}{1-e^{-2r^2}}\leq1+\frac{e^{-2r^2}}{1-e^{-2\delta^2}}\,.
\end{equation}

\medskip

{\em Step 2.} We obtain here the upper estimate for $P_p(x)$ if $x\in B(x_j,R_j)\setminus\{x_j\}$. Let $S\in H^0_{(2)}(X\setminus\Sigma,L^p)$ and write $S=se_x^{\otimes p}$, where $e_x$ is the local holomorphic frame of $L$ on $B(x,|x|)$ from Step 1 and $s\in \mathcal O_X(B(x,|x|))$. Let $r_p\in(0,|x|/4)$ be an arbitrary number which will be specified later. It follows from the sub-averaging inequality for subharmonic functions that 
$$|S(x)|^2_{h_p} = |s(x)|^2 \leq \frac{\int_{B(x,r_p)}|s(z)|^2e^{-2p\lambda_x|z-x|^2}\,dm(z)}{\int_{B(x,r_p)}e^{-2p\lambda_x|z-x|^2}\,dm(z)}\,\cdot$$
Using \eqref{e:omega}, \eqref{e:tvarphi} and \eqref{e:tpsi} we obtain 
\begin{eqnarray*}
\int_{B(x,r_p)}|s(z)|^2e^{-2p\lambda_x|z-x|^2}\,dm(z)&\leq&\frac{\exp\!\big(2p\max_{B(x,r_p)}\widetilde{\psi_j}\big)}{m_j(x,r_p)}\int_{B(x,r_p)}|s(z)|^2e^{-2p\widetilde\varphi_j(z)}\,\omega(z)\\ 
&\leq&\frac{\exp\!\big(2^{\alpha_j+1}A_jpr_p^3|x|^{-\alpha_j}\big)}{m_j(x,r_p)}\,\|S\|^2_p\,.
\end{eqnarray*}
Moreover,
\begin{equation}\label{e:Gauss}
\int_{B(x,r)}e^{-2p\lambda_x|z-x|^2}\,dm(z)=\frac{1}{p\lambda_x}\,E\big(r\sqrt{p\lambda_x}\big)\leq\frac{\pi}{2p\lambda_x}\,,\,\;\forall\,r>0.
\end{equation}
Combining these estimates it follows that
\begin{equation}\label{e:b1}
|S(x)|_{h_p}^2\leq\|S\|_p^2\,\frac{p\lambda_x}{E\big(r_p\sqrt{p\lambda_x}\big)}\,\frac{\exp\!\big(2^{\alpha_j+1}A_jpr_p^3|x|^{-\alpha_j}\big)}{m_j(x,r_p)}\,.
\end{equation}
Note that 
$$\frac{c_1(L,h)_x}{\omega_x}=\frac{2\lambda_x}{\pi\rho_j(x)}\,.$$
Taking the supremum in \eqref{e:b1} over $S\in H^0_{(2)}(X\setminus\Sigma,L^p)$ with $\|S\|_p=1$ we get by \eqref{e:Pvar} 
$$P_p(x)\leq p\,\frac{c_1(L,h)_x}{\omega_x}\,\frac{\pi}{2E\big(r_p\sqrt{p\lambda_x}\big)}\,\frac{\rho_j(x)}{m_j(x,r_p)}\,\exp\!\big(2^{\alpha_j+1}A_jpr_p^3|x|^{-\alpha_j}\big).$$
Using \eqref{e:lambdax} we obtain  
$$P_p(x)\leq p\,\frac{c_1(L,h)_x}{\omega_x}\,\frac{\pi}{2E\big(r_p\sqrt{\pi\varepsilon c_0p}\big)}\,\frac{\rho_j(x)}{m_j(x,r_p)}\,\exp\!\big(2^{\alpha_j+1}A_jpr_p^3|x|^{-\alpha_j}\big).$$
If $r_p\sqrt p\geq1$ then by \eqref{e:E(r)},  
$$\frac{\pi}{2E\big(r_p\sqrt{\pi\varepsilon c_0p}\big)}\leq1+C'_2\exp(-2\pi\varepsilon c_0pr_p^2)$$
with a constant $C'_2>0$. Moreover, if $pr_p^3<|x|^{\alpha_j}$ then 
\begin{equation}\label{e:exp}
\exp\!\big(2^{\alpha_j+1}A_jpr_p^3|x|^{-\alpha_j}\big)\leq\exp\!\big(2^{\alpha_j+4}A_jpr_p^3|x|^{-\alpha_j}\big)\leq1+C'_3pr_p^3|x|^{-\alpha_j},
\end{equation}
with a constant $C'_3>0$. If, in addition, $r_p<c_0|x|^{\beta_j}/(2^{\beta_j+2}A'_j)$ 
then applying \eqref{e:M/m} together with these estimates to the above upper bound on $P_p(x)$ yields 
\begin{equation}\label{e:b2}
\begin{split}
&P_p(x)\leq p\,\frac{c_1(L,h)_x}{\omega_x}\,\Big(1+C'_2e^{-2\pi\varepsilon c_0pr_p^2}\Big)
\Big(1+C'_1r_p|x|^{-\beta_j}\Big)\Big(1+C'_3pr_p^3|x|^{-\alpha_j}\Big),\\
&\text{provided that } 0<r_p<|x|/4,\;r_p\sqrt p\geq1,\;r_p<c_0|x|^{\beta_j}/(2^{\beta_j+2}A'_j),\;pr_p^3<|x|^{\alpha_j}.
\end{split}
\end{equation}
Set $r_p=p^{-a}$, so $r_p\sqrt{p}=p^{1/2-a}$, $pr_p^3=p^{1-3a}$. We have shown the following: 
\begin{equation}\label{e:b3}
\begin{split}
&\text{If } 1/3<a<1/2,\;p^{-a}<|x|/4,\;p^{-a}<c_0|x|^{\beta_j}/(2^{\beta_j+2}A'_j),\;p^{1-3a}<|x|^{\alpha_j}\,,\,\text{ then}\\
&P_p(x)\leq p\,\frac{c_1(L,h)_x}{\omega_x}\,\Big(1+C'_2e^{-2\pi\varepsilon c_0p^{1-2a}}\Big)\Big(1+C'_1p^{-a}|x|^{-\beta_j}\Big)\Big(1+C'_3p^{1-3a}|x|^{-\alpha_j}\Big).
\end{split}
\end{equation}
With $a=3/8$ this implies that there exists a constant $C'_4>0$ such that if 
$$p>C'_4\max\{|x|^{-8/3},|x|^{-8\beta_j/3},|x|^{-8\alpha_j}\}=C'_4|x|^{-\delta_j},$$ 
where $\delta_j$ is defined in \eqref{e:delta}, then 
\begin{equation}\label{e:b4}
\frac{P_p(x)}{p}\,\frac{\omega_x}{c_1(L,h)_x}\leq1+C'_4\Big(p^{-3/8}|x|^{-\beta_j}+p^{-1/8}|x|^{-\alpha_j}\Big).
\end{equation}

\medskip

{\em Step 3.} We obtain now the lower estimate for $P_p(x)$ if $x\in B(x_j,R_j)\setminus\{x_j\}$. As before, let $r_p\in(0,|x|/4)$ be an arbitrary number which will be specified later. Let $\chi:\C\to[0,1]$ be a smooth function such that $\chi=1$ on the unit disc $B(0,1)$ and $\supp\chi\subset B(0,2)$. If $e_x$ is the local holomorphic frame of $L$ on $B(x,|x|)$ from Step 1, define 
$$\chi_p(z)=\rho_j(x)^{-1/2}\,\chi\Big(\frac{z-x}{r_p}\Big)\,,\,\;F=\chi_pe_x^{\otimes p},\,\text{ so }|F(x)|^2_{h_p}=\rho_j(x)^{-1}e^{-2p\widetilde\varphi_j(x)}=\rho_j(x)^{-1}.$$
Using \eqref{e:omega}, \eqref{e:tvarphi}, \eqref{e:tpsi} and \eqref{e:Gauss} we obtain 
\begin{eqnarray}\label{e:b5}
\int_{B(x,2r_p)}e^{-2p\widetilde\varphi_j}\,\omega
&\leq&M_j(x,2r_p)\exp\!\big(2p\max_{B(x,2r_p)}|\widetilde{\psi_j}|\big)\int_{B(x,2r_p)}e^{-2p\lambda_x|z-x|^2}\,dm(z)\nonumber\\ 
&\leq&\frac{\pi M_j(x,2r_p)}{2p\lambda_x}\,\exp\!\big(2^{\alpha_j+4}A_jpr_p^3|x|^{-\alpha_j}\big).
\end{eqnarray}
Since $\chi_p^2\leq\rho_j(x)^{-1}$, this implies 
\begin{equation}\label{e:b6}
\|F\|^2_p=\int_{B(x,2r_p)}\chi_p^2e^{-2p\widetilde\varphi_j}\,\omega\leq\frac{\pi}{2p\lambda_x}\,\frac{M_j(x,2r_p)}{\rho_j(x)}\,\exp\!\big(2^{\alpha_j+4}A_jpr_p^3|x|^{-\alpha_j}\big).
\end{equation}

\par Note that any non-compact Riemann surface admits a complete K\"ahler metric,
since it is a Stein manifold by Behnke-Stein \cite{BS49}. Hence
if $X$ is a Riemann surface and $\Sigma$ is a discrete closed set, then
$X\setminus\Sigma$ admits a complete K\"ahler metric.
By assumptions (B) and (C), $\ric_\omega\geq-2\pi B\omega$, 
$c_1(L^p,h_p)\geq2p\varepsilon\omega$ on $X\setminus\Sigma$. 
So if $p\geq B/\varepsilon$ we can solve the 
$\db$-equation using \cite{D82} (see Theorem \ref{T:db}): 
if $\theta=\db F\in L^2_{0,1}(X\setminus\Sigma,L^p,loc)$ there exists 
$G\in L^2_{0,0}(X\setminus\Sigma,L^p,loc)$ such that $\db G=\theta=\db F$ and 
$$\|G\|^2_p=\int_{X\setminus\Sigma}|G|^2_{h_p}\omega\leq\frac{1}{p\varepsilon}
\int_{X\setminus\Sigma}|\theta|^2_{h_p}\omega.$$
Since $|\db\chi_p|^2\leq\|\db\chi\|^2\rho_j(x)^{-1}r_p^{-2}$, where 
$\|\overline\partial\chi\|$ denotes the maximum of 
$|\overline\partial\chi|$, we get by \eqref{e:b5}, 
$$\int_{X\setminus\Sigma}|\theta|^2_{h_p}\omega=\int_{B(x,2r_p)}|\db\chi_p|^2e^{-2p\widetilde\varphi_j}\,\omega\leq\frac{\pi\|\db\chi\|^2}{2\lambda_xpr_p^2}\,\frac{M_j(x,2r_p)}{\rho_j(x)}\,\exp\!\big(2^{\alpha_j+4}A_jpr_p^3|x|^{-\alpha_j}\big).$$
Thus  
\begin{equation}\label{e:b7}
\|G\|^2_p\leq\frac{1}{p\varepsilon}\,\frac{\pi\|\db\chi\|^2}{2\lambda_x}\,\frac{1}{pr_p^2}\,\frac{M_j(x,2r_p)}{\rho_j(x)}\,\exp\!\big(2^{\alpha_j+4}A_jpr_p^3|x|^{-\alpha_j}\big).
\end{equation}
Since $\overline\partial G=\overline\partial F=0$ on $B(x,r_p)$, $G$ is holomorphic on $B(x,r_p)$. Hence the estimate \eqref{e:b1} applies to $G$ on $B(x,r_p)$ and gives  
$$|G(x)|_{h_p}^2\leq\|G\|_p^2\,\frac{p\lambda_x}{E\big(r_p\sqrt{p\lambda_x}\big)}\,\frac{\exp\!\big(2^{\alpha_j+1}A_jpr_p^3|x|^{-\alpha_j}\big)}{m_j(x,r_p)}.$$
Using \eqref{e:b7} we obtain
$$|G(x)|_{h_p}^2\leq\rho_j(x)^{-1}\,\frac{\pi\|\db\chi\|^2}{2\varepsilon E\big(r_p\sqrt{p\lambda_x}\big)}\,\frac{1}{pr_p^2}\,\frac{M_j(x,2r_p)}{m_j(x,r_p)}\,\exp\!\big(2^{\alpha_j+5}A_jpr_p^3|x|^{-\alpha_j}\big).$$
If $r_p\sqrt p\geq1$ then $E\big(r_p\sqrt{p\lambda_x}\big)\geq E\big(\sqrt{\pi\varepsilon c_0}\big)$ by \eqref{e:lambdax}. So 
\begin{equation}\label{e:b8}
|G(x)|_{h_p}^2\leq\rho_j(x)^{-1}\,\frac{C'_5}{pr_p^2}\,\frac{M_j(x,2r_p)}{m_j(x,r_p)}\,\exp\!\big(2^{\alpha_j+5}A_jpr_p^3|x|^{-\alpha_j}\big),
\end{equation}
with a constant $C'_5>0$.

\par Set $S=F-G\in H^0(X\setminus\Sigma,L^p)$, as $\db S=\db F-\db G=0$. By \eqref{e:b6} and \eqref{e:b7} we have 
$$\|S\|^2_p\leq(\|F\|_p+\|G\|_p)^2
\leq\frac{\pi}{2p\lambda_x}\,\frac{M_j(x,2r_p)}{\rho_j(x)}\,\exp\!\big(2^{\alpha_j+4}A_jpr_p^3|x|^{-\alpha_j}\big)\Big(1+\frac{\|\db\chi\|}{r_p\sqrt{p\varepsilon}}\Big)^2.$$
Moreover, if 
$$Q(x,r_p):=\frac{\sqrt{C'_5}}{r_p\sqrt{p}}\,\left(\frac{M_j(x,2r_p)}{m_j(x,r_p)}\right)^{1/2}\,\exp\!\big(2^{\alpha_j+4}A_jpr_p^3|x|^{-\alpha_j}\big)<1,$$
then using \eqref{e:b8} it follows that 
$$|S(x)|_{h_p}^2\geq(|F(x)|_{h_p}-|G(x)|_{h_p})^2\geq\rho_j(x)^{-1}(1-Q(x,r_p))^2.$$
Therefore 
$$P_p(x)\geq\frac{|S(x)|_{h_p}^2}{\|S\|^2_p}\geq p\,\frac{c_1(L,h)_x}{\omega_x}
\frac{\rho_j(x)}{M_j(x,2r_p)}\,\frac{(1-Q(x,r_p))^2}
{\Big(1+\frac{\|\db\chi\|}{r_p\sqrt{p\varepsilon}}\Big)^2
\exp\!\big(2^{\alpha_j+4}A_jpr_p^3|x|^{-\alpha_j}\big)}\,.$$
If $r_p<c_0|x|^{\beta_j}/(2^{\beta_j+2}A'_j)$ and $pr_p^3<|x|^{\alpha_j}$ then by 
\eqref{e:M/m} and \eqref{e:exp},
$$Q(x,r_p)\leq\frac{\sqrt{C'_5}}{r_p\sqrt{p}}\,\big(1+C'_1r_p|x|^{-\beta_j}\big)^{1/2}\big(1+C'_3pr_p^3|x|^{-\alpha_j}\big)\leq\frac{C'_6}{r_p\sqrt{p}}\,,$$
for some constant $C'_6>1$. We may assume that 
$C'_6\geq\|\overline\partial\chi\|/\sqrt{\varepsilon}$. Applying this estimate on $Q(x,r_p)$ 
together with \eqref{e:M/m} and \eqref{e:exp} to the above lower bound on $P_p(x)$ yields 
\begin{equation}\label{e:b9}
\begin{split}
&\hspace{4mm}P_p(x)\geq p\,\frac{c_1(L,h)_x}{\omega_x}\,
\frac{\Big(1-\frac{C'_6}{r_p\sqrt{p}}\Big)^2}{\big(1+C'_1r_p|x|^{-\beta_j}\big)
\big(1+C'_3pr_p^3|x|^{-\alpha_j}\big)\Big(1+\frac{C'_6}{r_p\sqrt{p}}\Big)^2}\,,\\
&\text{if } p>B/\varepsilon,\;0<r_p<|x|/4,\;r_p\sqrt p>C'_6,\;
r_p<c_0|x|^{\beta_j}/(2^{\beta_j+2}A'_j),\;pr_p^3<|x|^{\alpha_j}.
\end{split}
\end{equation}

\par We again let $r_p=p^{-a}$, so $r_p\sqrt{p}=p^{1/2-a}$, $pr_p^3=p^{1-3a}$. Then \eqref{e:b9} implies the following: 
\begin{equation}\label{e:b10}
\begin{split}
&\text{If } 1/3<a<1/2,\;p>B/\varepsilon,\;p^{1/2-a}>C'_6,\;p^{-a}<|x|/4,\;p^{-a}<c_0|x|^{\beta_j}/(2^{\beta_j+2}A'_j),\;\\ 
& p^{1-3a}<|x|^{\alpha_j}\,,\,\text{ then}\\
& \hspace{4mm} P_p(x)\geq p\,\frac{c_1(L,h)_x}{\omega_x}\,\frac{\Big(1-C'_6p^{a-1/2}\Big)^2}{\big(1+C'_1p^{-a}|x|^{-\beta_j}\big)\big(1+C'_3p^{1-3a}|x|^{-\alpha_j}\big)\Big(1+C'_6p^{a-1/2}\Big)^2}\,.
\end{split}
\end{equation}
Taking $a=3/8$ we conclude that there exists a constant $C'_7>0$ such that if 
$$p>C'_7\max\{|x|^{-8/3},|x|^{-8\beta_j/3},|x|^{-8\alpha_j}\}=C'_7|x|^{-\delta_j}$$ 
then 
\begin{equation}\label{e:b11}
\frac{P_p(x)}{p}\,\frac{\omega_x}{c_1(L,h)_x}\geq1-C'_7\Big(p^{-3/8}|x|^{-\beta_j}+p^{-1/8}|x|^{-\alpha_j}\Big).
\end{equation}
This concludes the proof of Theorem \ref{T:mt1}.
\end{proof}

\smallskip
\begin{proof}[Proof of Theorem \ref{T:mt2}]
Here $L=K_X$. The proof is analogous to the proof of Theorem \ref{T:mt1}, with the only difference
that we apply Theorem \ref{T:db1} instead of Theorem \ref{T:db} in order to solve
the $\db$-equation in Step 3. 
For the leading term of the expansion observe that 
$c_1(K_Y,h^{K_Y})=(-R_\omega/2\pi)\,\omega$
by \eqref{e:ric3}.
\end{proof}
\smallskip

\begin{proof}[Proof of Corollary \ref{C:mt1}] Let $C_j$ be the constant from Theorem \ref{T:mt1}. 
Then $\dist(x,x_j)>(C_j/p)^{\eta/\delta_j}\geq(C_j/p)^{1/\delta_j}$. 
Hence by Theorem \ref{T:mt1}, using that $\alpha_j/\delta_j\leq1/8$ and $\beta_j/\delta_j\leq3/8$, we obtain
\begin{eqnarray*}
\Big|\frac{P_p(x)}{p}\,\frac{\omega_x}{c_1(L,h)_x}-1\Big|
&\leq&C_j\Big(p^{-1/8}\dist(x,x_j)^{-\alpha_j}+p^{-3/8}\dist(x,x_j)^{-\beta_j}\Big)\\
&\leq&C_j\Big(p^{-1/8}(p/C_j)^{\eta\alpha_j/\delta_j}+p^{-3/8}(p/C_j)^{\eta\beta_j/\delta_j}\Big)\\
&\leq&C_j\Big(p^{-(1-\eta)/8}+p^{-3(1-\eta)/8}\Big)\leq2C_j\,p^{-(1-\eta)/8}.
\end{eqnarray*}
\end{proof}

\medskip

\par We give now a semi-global version of Theorem \ref{T:mt1}. Let $K\subset X$ be a compact set and let $\Sigma\cap K=\{x_1,\ldots,x_m\}$. Fix a constant $R_0>0$ with the property that every point $x\in K$ has a coordinate neighborhood $U_x$ centered at $x$ such that the coordinate disc $B(x,2R_0)\Subset U_x$ and 
$$\dist(B(x_j,2R_0),\Sigma\setminus\{x_j\})\geq R_0,\;1\leq j\leq m.$$
Define 
\begin{eqnarray*}
&&\alpha:=\max\{\alpha_j:\,1\leq j\leq m\}\,,\,\;A:=\max\{A_j:\,1\leq j\leq m\},\\
&&\beta:=\max\{\beta_j:\,1\leq j\leq m\}\,,\,\;A':=\max\{A'_j:\,1\leq j\leq m\}.
\end{eqnarray*}
where $\alpha_j,A_j$ and $\beta_j,A'_j$ are as in \eqref{e:dpsi}, 
respectively \eqref{e:drho}. We have the following:

\begin{Theorem}\label{T:mt3} Let $(X,\Sigma,\omega,L,h)$ 
verify assumptions (A)-(C), and let $K\subset X$ be a compact set with 
$\Sigma\cap K=\{x_1,\ldots,x_m\}$. Assume that $h,\,\omega$ satisfy 
\eqref{e:dpsi}, respectively \eqref{e:drho}. 
Then there exists a constant $C=C(K,\Sigma,\omega,L,h)>1$ such that 
if $x\in K\setminus\Sigma$ and $p>C\dist(x,\Sigma)^{-\delta}$, 
where $\delta=\max\{8/3,8\beta/3,8\alpha\}$, then
$$\Big|\frac{P_p(x)}{p}\,\frac{\omega_x}{c_1(L,h)_x}-1\Big|\leq 
C\Big(p^{-1/8}\dist(x,\Sigma)^{-\alpha}+p^{-3/8}\dist(x,\Sigma)^{-\beta}\Big).$$
\end{Theorem}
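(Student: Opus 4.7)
The plan is to derive Theorem \ref{T:mt3} from Theorem \ref{T:mt1} via a two-part argument. Since $K$ is compact and $\Sigma$ is discrete and closed, I would first choose a uniform radius $R_0 \in (0, \min_j R_j]$ with $R_0 \leq 1$, small enough that any $x \in K$ with $\dist(x, \Sigma) \leq R_0$ satisfies $\dist(x, \Sigma) = \dist(x, x_j)$ for a unique $j \in \{1, \ldots, m\}$; this is possible by discreteness of $\Sigma$ together with \eqref{e:Rj}, which guarantees that the balls $B(x_j, R_j)$ contain no other points of $\Sigma$. For $x$ in this inner region, Theorem \ref{T:mt1} applies at the relevant $x_j$, and since $\dist(x, x_j) = \dist(x, \Sigma) \leq 1$ with $\alpha_j \leq \alpha$, $\beta_j \leq \beta$ and $\delta_j \leq \delta$, each singular factor $\dist(x, x_j)^{-\alpha_j}$ is bounded above by $\dist(x, \Sigma)^{-\alpha}$, and similarly for the $\beta$ and $\delta$ terms. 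This yields the bound of Theorem \ref{T:mt3} on the inner region with constant $\max_j C_j$.

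For the complementary region $K_0 := \{x \in K : \dist(x, \Sigma) \geq R_0\}$, which is a compact subset of $X \setminus \Sigma$, both $\omega$ and $h$ are uniformly $\cC^2$, resp.\ $\cC^3$, with no singular behavior. I would re-run Steps 1--3 of the proof of Theorem \ref{T:mt1} with the local coordinate centered at $x$ itself rather than at a singular point, so that the singular log term is absent and the third-derivative bounds on $\psi$ together with first-derivative bounds on $\rho$ can be replaced by uniform constants valid throughout a neighborhood of $K_0$. Choosing $r_p = p^{-a}$ with $a$ close to $1/2$ then gives an estimate of $|P_p(x)\omega_x/(p\,c_1(L,h)_x) - 1|$ of order $p^{-1/2+\epsilon}$ uniformly on $K_0$ for any $\epsilon > 0$. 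Since $\dist(x, \Sigma)^{-\alpha}$ is bounded below by a positive constant on $K_0$, this easily dominates the bound required by Theorem \ref{T:mt3} after enlarging the constant.

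The main obstacle is verifying that the auxiliary constants $C'_1, \ldots, C'_7$ in the proof of Theorem \ref{T:mt1} depend monotonically on the parameters $\alpha_j, \beta_j, A_j, A'_j$, and on a uniform lower bound $c_0$ for $\rho_j$ over the compact set $\bigcup_j \overline{B(x_j, R_0)}$. A careful inspection of the proof (estimates \eqref{e:M/m}, \eqref{e:tpsi}, \eqref{e:lambdax}, and their uses in Steps 2 and 3) shows this monotonicity holds, so replacing the individual parameters by their uniform maxima indeed produces a single constant $C = C(K, \Sigma, \omega, L, h)$ that works uniformly over $j$. Combining this with the interior estimate on $K_0$ via $C := \max\{\max_j C_j,\, C_2 R_0^{-\alpha}\}$ yields Theorem \ref{T:mt3}.
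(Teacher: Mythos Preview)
Your two-region decomposition matches the paper's proof: apply Theorem \ref{T:mt1} near each $x_j$ and take $\max_j C_j$, then re-run Steps 1--3 on the compact complement $K_0$ where $h$ and $\omega$ are uniformly smooth. However, your claim that taking $a$ close to $1/2$ on $K_0$ yields an error of order $p^{-1/2+\epsilon}$ is incorrect. In Step 3 the lower bound for $P_p(x)$ carries the factor $(1-C'_6/(r_p\sqrt{p}))^2$, i.e.\ an error term of size $C'_6\,p^{a-1/2}$ (see \eqref{e:b9}--\eqref{e:b10}), which tends to a nonzero constant as $a\to 1/2$ rather than to zero. The same issue appears in the exponential factor $e^{-2\pi\varepsilon c_0' p^{1-2a}}$ of the upper bound \eqref{e:b12}. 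The paper therefore keeps $a=3/8$, obtaining an error $O(p^{-1/8})$ uniformly on $K_0$; since $\dist(x,\Sigma)^{-\alpha}$ is bounded below on $K_0$, this already dominates the right-hand side of the theorem after enlarging the constant. Any fixed $a\in(1/3,1/2)$ would also work, but not $a\to1/2$.

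Your third paragraph on monotonicity of the auxiliary constants $C'_1,\ldots,C'_7$ is unnecessary. For the near-singular region you already have the finitely many constants $C_j$ delivered by Theorem \ref{T:mt1}, and their maximum suffices (together with the elementary monotonicity $\dist^{-\alpha_j}\leq\dist^{-\alpha}$ for $\dist\leq1$, which you noted). For $K_0$ you are running the argument afresh with uniform $\cC^3$/$\cC^2$ bounds and $\alpha=\beta=0$, so there is nothing to compare to the singular constants.
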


\begin{proof} Let $$K':=K\setminus\bigcup_{j=1}^mB(x_j,R_j).$$
There exist a positive number $r_0<R_0$ and points $y_j\in K'$, $1\leq j\leq m'$, 
such that $K'\subset\bigcup_{j=1}^{m'}B(y_j,r_0)$ and 
\begin{equation}\label{e:r0}
\dist(B(y_j,2r_0),\Sigma)\geq r_0,\;1\leq j\leq m'.
\end{equation}

\par We have to estimate $P_p(x)$ for $x\in B(y_j,r_0)$. 
Note that $h$ is of class $\cC^3$ and $\omega$ is of class $\cC^2$ 
in a neighborhood of $\overline B(y_j,2r_0)$. As in \eqref{e:rho} we write 
$\omega(z)=\rho_j(z)\,dm(z)$, where $\rho_j\geq c'_0$ 
on $B(y_j,2r_0)$, $1\leq j\leq m'$, with some constant $c'_0>0$. 
If $M_j(x,r)$ and $m_j(x,r)$ are defined as in Step 1 of the proof of 
Theorem \ref{T:mt1}, we have that \eqref{e:M/m} holds for $r<r_0/2$ 
with $\beta_j=0$ and some constant $C'_1>0$. 
Next, we can choose a holomorphic frame $e_x$ of $L$ on 
$B(x,r_0)\subset B(y_j,2r_0)$ for which the corresponding weight 
$\widetilde\varphi_j$ of $h$ verifies \eqref{e:tvarphi} and 
$$\max\{|\widetilde\psi_j(z)|:\,z\in B(x,r)\}\leq C'_8r^3,\,\;r<r_0,$$
with some constant $C'_8>0$. Moreover, $\lambda_x\geq\pi\varepsilon c'_0$. 

\par Proceeding as in Step 2 and Step 3 of the previous proof, 
we show that there exist $p_0\in\N$ and a constant $C'_9>0$, 
such that if $1/3<a<1/2$, $p>p_0$, and $p^{1/2-a}>C'_9$, then 
\begin{equation}\label{e:b12}
\begin{split}
P_p(x)&\leq p\,\frac{c_1(L,h)_x}{\omega_x}\,
\Big(1+C'_9\big(e^{-2\pi\varepsilon c_0'p^{1-2a}}+p^{-a}+p^{1-3a}\big)\Big),\\
P_p(x)&\geq p\,\frac{c_1(L,h)_x}{\omega_x}\,\Big(1-C'_9\big(p^{a-1/2}+
p^{-a}+p^{1-3a}\big)\Big).
\end{split}
\end{equation}
Choosing $a=3/8$ in \eqref{e:b12} we see that there exists $p'_0\in\N$ 
such that if $p>p'_0$ then 
$$\Big|\frac{P_p(x)}{p}\,\frac{\omega_x}{c_1(L,h)_x}-1\Big|\leq 
3C'_9\,p^{-1/8}.$$
Together with Theorem \ref{T:mt1}, this completes the proof of Theorem \ref{T:mt3}.
\end{proof}


\section{Applications}\label{S:appl}
In this section we examine some situations when the parameters $\alpha$ and
$\beta$ can be explicitly calculated. We consider metrics with logarithmic singularities
and hyperbolic metrics with parabolic singularities (cusps) or conical singularities.

\subsection{Metrics with logarithmic singularities}
Let $X$ be a Riemann surface and $\Sigma\subset X$ be a discrete closed subset. 
Let $\omega$ be a Hermitian metric of class $\cC^2$ on $X$. 
Let $(L,h)$ be a holomorphic line bundle on $X$ with singular metric
$h$, see \cite{D90}, \cite[p.\,97]{MM07}.
We assume that $h$ is smooth on $X\setminus\Sigma$ and
has weights with logarithmic singularities at $\Sigma$,
that is, in \eqref{e:varphi} we have $\varphi_j(z)=\nu_j\log|z|+\psi_j(z)$,
with $\nu_j\geq0$ and $\psi_j\in\cC^3(B(x_j,2R_j)$, where
$\varphi_j(z)=-\log|e_j|_h$ is a local weight around $x_j$.

In this situation we have $\alpha=\beta=0$, so from Theorem \ref{T:mt1}
we obtain immediately the following.
\begin{Corollary}\label{cor:disc}
Let $X$ be a Riemann surface and $\Sigma\subset X$ be a discrete closed subset.
Let $\omega$ be a K\"ahler metric of class $\cC^2$ on $X$ 
such that $\ric_\omega\geq-2\pi B\omega$, for some $B>0$.
Let $(L,h)$ be a holomorphic line bundle on $X$, where $h$ has 
weights with logarithmic singularities at $\Sigma$, it is smooth on 
$X\setminus\Sigma$ and $c_1(L,h)\geq2\varepsilon\omega$ holds 
in the sense of currents on $X$, for some $\varepsilon>0$.
Let $P_p(x)$ be the Bergman kernel function of $H^0_{(2)}(X,L^p)$.
Then for any $x_j\in\Sigma$ and any compact set 
$K\subset X$ with $K\cap\Sigma=\{x_j\}$ there exists
$C_j=C_j(K)>0$ such that if $x\in K$ then
\begin{equation}\label{e:Bexp005}
\Big|\frac{P_p(x)}{p}\,\frac{\omega_x}{c_1(L,h)_x}-1\Big|
\leq C_j\, p^{-1/8}\,,\:\:\text{for $p>C_j\dist(x,x_j)^{-8/3}$}.
\end{equation}
\end{Corollary}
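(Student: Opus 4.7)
The plan is to verify that the hypotheses of Theorem \ref{T:mt1} hold in this setting with the exponents $\alpha_j=\beta_j=0$, and then to simplify the resulting estimate. First, I would check assumptions (A)--(C). Pick any smooth Hermitian metric $\Omega$ on $X$; assumption (A) then holds trivially. For (B), since $\omega$ is a $\cC^2$ K\"ahler metric defined on all of $X$ (and not merely on $X\setminus\Sigma$), the coefficient $\rho_j$ in \eqref{e:rho} is $\cC^2$ across $x_j$ and strictly positive, so locally near $x_j$ one has $\omega\ge c\,\Omega$ for some continuous positive $c$, and the Ricci bound is part of the assumption. Assumption (C) follows from $c_1(L,h)\ge 2\varepsilon\omega$ combined with the fact that $h$ is $\cC^3$ on $X\setminus\Sigma$.

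Next I would read off the exponents. Since $\psi_j\in\cC^3(B(x_j,2R_j))$ (a neighbourhood with \emph{compact closure}), all third-order derivatives of $\psi_j$ are bounded on $B(x_j,2R_j)\setminus\{x_j\}$, so \eqref{e:dpsi} holds with $\alpha_j=0$ and some constant $A_j>0$. Similarly, $\rho_j\in\cC^2(B(x_j,2R_j))$ has bounded first derivatives there, so \eqref{e:drho} holds with $\beta_j=0$ and some $A'_j>0$. Plugging these into the definition \eqref{e:delta} gives $\delta_j=\max\{8/3,0,0\}=8/3$, which matches the hypothesis $p>C_j\dist(x,x_j)^{-8/3}$ in the corollary.

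A brief point to dispatch is that the Bergman space appearing in the corollary is $H^0_{(2)}(X,L^p)$ whereas Theorem \ref{T:mt1} uses $H^0_{(2)}(X\setminus\Sigma,L^p)$. However, any $L^2$-holomorphic section on $X\setminus\Sigma$ is locally bounded near the discrete set $\Sigma$ (by the mean-value inequality applied on small coordinate discs together with the local positivity of the weight $e^{-2p\varphi_j}$), and therefore extends holomorphically across $\Sigma$ by Riemann's removable singularity theorem. Since $\Sigma$ has measure zero, the two Bergman spaces coincide isometrically and have the same Bergman kernel function on $X\setminus\Sigma$.

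Finally, I would apply Theorem \ref{T:mt1} directly. With $\alpha_j=\beta_j=0$ the estimate \eqref{e:Bexp00} becomes
\[
\Big|\frac{P_p(x)}{p}\,\frac{\omega_x}{c_1(L,h)_x}-1\Big|
\leq C_j\bigl(p^{-1/8}+p^{-3/8}\bigr)\leq 2C_j\,p^{-1/8},
\]
for $p>C_j\dist(x,x_j)^{-8/3}$, which is \eqref{e:Bexp005} up to relabelling the constant. There is no serious obstacle here: the entire argument is a careful unpacking of the smoothness hypotheses into the parameter values $\alpha_j=\beta_j=0$ of Theorem \ref{T:mt1}. The only mildly subtle point is the identification of the two Bergman spaces under the logarithmic singularity condition $\nu_j\ge 0$, which is handled by removable singularities as above.
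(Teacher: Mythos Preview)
Your proof is correct and follows exactly the approach the paper takes: the paper's entire argument is the one-line remark ``In this situation we have $\alpha=\beta=0$, so from Theorem~\ref{T:mt1} we obtain immediately the following,'' and you have simply unpacked this in detail. Your additional paragraph identifying $H^0_{(2)}(X,L^p)$ with $H^0_{(2)}(X\setminus\Sigma,L^p)$ via removable singularities (using that $\nu_j\geq0$ forces the weight $e^{-2p\varphi_j}$ to be bounded below near $x_j$) is a genuine clarification that the paper leaves implicit. One small point you do not address, and which the paper also glosses over, is extending the estimate from $B(x_j,R_j)$ to an arbitrary compact set $K$ with $K\cap\Sigma=\{x_j\}$; this is handled by the smooth-case estimate on $K\setminus B(x_j,R_j)$ (as in Theorem~\ref{T:mt3}) and absorbed into $C_j$, since $\dist(x,x_j)$ is bounded below there.
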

Note that on a compact Riemann surface the Ricci curvature is automatically
bounded below. 
Given a compact Riemann surface, a line bundle
$(L,h_0)$ with smooth metric of positive curvature and a finite set $\Sigma\subset X$, we can
always construct a singular Hermitian metric $h$ with the properties
of Corollary \ref{cor:disc}. We can take $h=h_0\exp(-\varepsilon\psi)$, where
$\varepsilon>0$ is small enough and $\psi$ a smooth function on $X\setminus\Sigma$ with
$\psi(z)=\log|z-x_j|$ in a neighborhood of $x_j\in\Sigma$.

Consider a non-compact Riemann surface $X$ endowed with a 
K\"ahler metric of class $\cC^2$ on $X$ 
such $\ric_\omega\geq-2\pi B\omega$, for some $B>0$. One can consider, for example,
a hyperbolic domain in $\mathbb{P}^1$ endowed with the Poincar\'e metric,
or a domain in $\C$ endowed with the Euclidean metric.
Recall that a hyperbolic domain $X\subset{\mathbb P}^1$
is a domain such that ${\mathbb P}^1\setminus X$ contains at least three points.
Since $X$ is Stein, $X$ admits a strictly subharmonic exhaustion function $\varphi$.
Let $\Sigma\subset X$ be a discrete closed subset.
The metric $\exp(-\chi(\varphi)-\psi)$ on the trivial bundle $L=X\times\C$
satisfies the conditions of Corollary \ref{cor:disc} for some convex increasing
function $\chi:\R\to\R$ and for $\psi$ as above.

\begin{Remark}
In terms of the Nadel multiplier ideal sheaves (see e.\,g.\ \cite[Definition\,2.3.1]{MM07}), 
we deal in Corollary \ref{cor:disc} with a singular metric $h$ on $L$ with $c_1(L,h)\geq2\varepsilon\omega$ 
such that the zero variety of the multiplier ideal sheaves $\cI(h)$ of $h$ equals $\Sigma$. 
We have $H^0_{(2)}(X,L^p)=H^0(X,\cO(L^p)\otimes\cI(h^p))$,
the space of global sections of the sheaf $\cO(L^p)\otimes\cI(h^p)$. 
By \cite[Theorem\,1.8]{HsM11}, if $X$ is compact, the Bergman kernel function 
$P_p(x)$ of  $H^0(X,\cO(L^p)\otimes\cI(h^p))$ has the full asymptotic expansion \eqref{e:bke1} 
on compact sets of $X\setminus\Sigma$.
\end{Remark}
\subsection{Poincar\'e metric on the punctured disc} 
Let us consider $X=\D$ and $\Sigma=\{0\}$. We endow the punctured disc 
$Y:=\D^*=\D\setminus\{0\}$ with the Poincar\'e metric 
$ds^2=(|z|\log|z|^2)^{-2}|dz|^2$, that is,
\begin{equation}\label{e:pm}
\omega=\frac{i}2\frac{dz\wedge d\overline{z}}{|z|^2(\log|z|^2)^2}\,\cdot
\end{equation}
This is a complete K\"ahler metric with Gauss curvature $R_\omega=-4$, or equivalently,
$\ric_\omega=-4\omega$ (see \eqref{e:ric3}). The metric $\omega$ fulfills condition (B$^\prime$).
\begin{Lemma}\label{L:abP}
We have $\alpha=\beta=3$.
\end{Lemma}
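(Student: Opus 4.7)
The plan is a direct computation, since for $L = K_X$ the local weight $\varphi$ of $h^{K_X}$ relative to the holomorphic frame $dz$ is determined entirely by $\omega$ through \eqref{e:dz}. So no bundle choice needs to be made and the lemma reduces to differentiating two explicit scalar functions.

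First I would unpack $\varphi$. From \eqref{e:pm} we read off $\rho(z) = |z|^{-2}(\log|z|^2)^{-2}$, and \eqref{e:dz} gives $\varphi = \tfrac12(\log\rho - \log 2)$. Setting $L(z) := -\log|z|^2 > 0$ on $\D^*$, we have $\log\rho = -2\log|z| - 2\log L$, hence
\[
\varphi(z) = -\log|z| - \log L(z) - \tfrac12\log 2,
\]
which is the decomposition \eqref{e:varphi} with $\nu = -1$ and $\psi(z) = -\log L(z) - \tfrac12\log 2$.

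Next I would estimate $D^3\psi$ and $D\rho$ by Wirtinger calculus. Since $\partial_z L = -1/z$ and $\partial_{\bar z} L = -1/\bar z$, differentiation of $\psi = -\log L + \textrm{const}$ yields $\partial_z\psi = 1/(zL)$. Iterating this rule one sees that every third-order partial derivative $\partial_z^a\partial_{\bar z}^b\psi$ with $a+b = 3$ is a finite linear combination of terms $c\,z^{-a'}\bar z^{-b'} L^{-k}$ with $a'+b'=3$ and $k\geq 1$. Each such term is bounded by a constant multiple of $|z|^{-3}$ on a small punctured neighbourhood of $0$ (since $L(z)\to\infty$ there), so
\[
|D^\mu\psi(z)| \leq A\,|z|^{-3}\qquad(|\mu|=3),
\]
verifying \eqref{e:dpsi} with $\alpha = 3$. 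Analogously, one computes $\partial_z\rho = -z^{-2}\bar z^{-1}L^{-2} + 2z^{-2}\bar z^{-1}L^{-3}$ (and symmetrically for $\partial_{\bar z}\rho$), so $|D\rho(z)|\leq A'|z|^{-3}$ near $0$ and \eqref{e:drho} holds with $\beta = 3$.

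Finally, to justify that $3$ is the correct (sharp) value and not an over-estimate, I would exhibit the leading asymptotics: a short hand computation gives
\[
\partial_z^3\psi = \bigl(2L - 3 + 2/L\bigr)\,z^{-3}L^{-2}, \qquad \partial_z\rho \sim -z^{-2}\bar z^{-1}L^{-2}\quad(z\to 0),
\]
so $\partial_z^3\psi \sim 2z^{-3}/L(z)$ and $|\partial_z\rho|\sim |z|^{-3}/L(z)^2$. Because $1/L(z) = 1/(-\log|z|^2)$ is not $O(|z|^\varepsilon)$ for any $\varepsilon>0$, no value strictly smaller than $3$ works in either \eqref{e:dpsi} or \eqref{e:drho}. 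There is no serious obstacle in the argument; the only mild care needed is ruling out cancellation of the leading $k=1$ term in $\partial_z^3\psi$, which is settled by the explicit formula above exhibiting the nonzero coefficient $2$ in front of $L$.
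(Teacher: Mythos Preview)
Your proof is correct and follows essentially the same approach as the paper: both arguments read off $\varphi$, $\psi$, $\rho$ from \eqref{e:pm} and \eqref{e:dz} and then bound the relevant derivatives by direct computation, the only cosmetic difference being that you use Wirtinger derivatives while the paper uses real $\partial_x,\partial_y$. Your additional sharpness argument (showing that no exponent strictly below $3$ works) goes beyond what the paper proves, since the paper only verifies that $\alpha=\beta=3$ are admissible values in \eqref{e:dpsi}--\eqref{e:drho}.
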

\begin{proof}
We use \eqref{e:dz}, \eqref{e:varphi}, 
\eqref{e:dpsi} and \eqref{e:rho}, \eqref{e:drho}.
We have $\varphi(z)=-\log|z|-\log|\log|z|^2|-\frac12\log2$, hence 
$\psi(z)=-\log|\log(x^2+y^2)|-\frac12\log2$, with $z=x+iy$. 
By explicitly calculating $\partial^3_x\psi$
and $\partial^2_x\partial_y\psi$ we obtain that for any $r\in(0,1)$ 
there exists a constant $C=C_r$ such that $|\partial^3_x\psi(z)|\leq C|z|^{-3}$,
$|\partial^2_x\partial_y\psi(z)|\leq C|z|^{-3}$ for $|z|\leq r$. By symmetry we obtain 
the same estimates for $\partial^3_y\psi$, $\partial_x\partial^2_y\psi$. Thus $\alpha=3$.

We have $\rho(z)=(|z|\log|z|^2)^{-2}$. By direct calculation we obtain 
$|\partial_z\rho(z)|\leq|z|^{-3}$ and by symmetry the same estimate for 
$|\partial_{\overline{z}}\rho(z)|$. Thus $\beta=3$.
\end{proof}
By Theorem \ref{T:mt2} and Lemma \ref{L:abP} we obtain:
\begin{Corollary}\label{cor:disc1}
Let $P_p(x)$ be the Bergman kernel function of $H^0_{(2)}(\D^*,K_{\D^*}^p)$.
Let $r\in(0,1)$.
Then there exists $C=C(r)>1$ such that if $0<|x|\leq r$ and $p>C|x|^{-24}$ we have
\begin{equation}\label{e:Bexp002}
\left|\frac{P_p(x)}{p}\cdot\frac{\pi}{2}-1\right|
\leq C p^{-1/8}|x|^{-3}.
\end{equation}
\end{Corollary}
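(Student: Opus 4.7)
The plan is to verify that the punctured disc with the Poincar\'e metric fits into the framework of Theorem \ref{T:mt2}, and then to read off the claimed estimate by identifying the leading coefficient and combining the two error terms.

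First I would check the hypotheses (A) and (B$^\prime$) for the data $(X,\Sigma,\omega)=(\D,\{0\},\omega)$, where $\omega$ is the Poincar\'e metric \eqref{e:pm}. Condition (A) is clear: $\D$ is a Riemann surface and $\{0\}\subset\D$ is a discrete closed subset, and we fix $\Omega$ to be (say) the Euclidean metric, so that $\dist(x,0)=|x|$. For (B$^\prime$), the Poincar\'e metric on $\D^*$ is smooth and complete; moreover $\omega\geq c\,\Omega$ for a positive continuous function $c$ on $\D$, since $\rho(z)=(|z|\log|z|^2)^{-2}$ is uniformly bounded below on any compact subset of $\D^*$ and tends to $+\infty$ as $z\to 0$ (so any sufficiently small positive continuous function on $\D$ can be chosen). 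The Gauss curvature is $R_\omega\equiv-4$, so by \eqref{e:ric3} we have $\ric_\omega=-4\omega$; hence the curvature condition holds with $\lambda=4$.

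Next, the preceding lemma supplies $\alpha=\beta=3$, which gives
\[
\delta=\max\{8/3,\,8\beta/3,\,8\alpha\}=\max\{8/3,\,8,\,24\}=24,
\]
matching the threshold $p>C|x|^{-24}$ in \eqref{e:Bexp002}. The leading coefficient in Theorem \ref{T:mt2} is
\[
-\frac{2\pi}{R_\omega(x)}=-\frac{2\pi}{-4}=\frac{\pi}{2},
\]
which is exactly the factor appearing in \eqref{e:Bexp002}.

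Finally, Theorem \ref{T:mt2} applied to the single puncture $x_j=0$ gives, for any fixed $r\in(0,1)$, a constant $C=C(r)>1$ such that whenever $0<|x|\leq r$ and $p>C|x|^{-24}$,
\[
\left|\frac{P_p(x)}{p}\cdot\frac{\pi}{2}-1\right|\leq C\bigl(p^{-1/8}|x|^{-3}+p^{-3/8}|x|^{-3}\bigr).
\]
Since $p^{-3/8}\leq p^{-1/8}$ for $p\geq 1$, both error terms can be absorbed into a single bound of the form $2C p^{-1/8}|x|^{-3}$, and after relabelling $2C$ as the new $C$ this is exactly \eqref{e:Bexp002}. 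There is no genuine obstacle here: the only substantive points are the verification of $\ric_\omega\leq-\lambda\omega$ (routine from $R_\omega=-4$) and the invocation of $\alpha=\beta=3$ from the lemma, both of which are immediate.
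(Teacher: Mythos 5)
Your proposal is correct and follows exactly the route the paper intends: the preceding lemma gives $\alpha=\beta=3$, hence $\delta=24$, the leading coefficient $-2\pi/R_\omega=\pi/2$ comes from $R_\omega\equiv-4$, and the two error terms $p^{-1/8}|x|^{-3}$ and $p^{-3/8}|x|^{-3}$ combine into a single $p^{-1/8}|x|^{-3}$ bound after enlarging the constant. The verification of (A) and (B$^\prime$) is routine as you say (one can even take $c$ constant, since $|z|\,\bigl|\log|z|^2\bigr|\leq 2/e$ on $\D^*$ gives $\rho\geq e^2/4$), so there is nothing to add.
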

An explicit expression of the Bergman
kernel on the punctured unit disc was derived in \cite[(3.7)]{AMM}. 
Modifying slightly \cite[Proposition 3.3]{AMM} we can show that 
\begin{equation}\label{e:Bexp003}
P_p(x)=\frac2\pi p-\frac4\pi+O(e^{-cp})\,,\:\:p\to\infty,
\end{equation}
outside a fixed neighborhood of the origin
(or more generally, outside a shrinking neighborhood \cite[(3.9)]{AMM}). 
Note that in \cite{AMM} the metric \eqref{e:pm}
is normalized such that its Gauss curvature equals $-2$ and one works
with a line
bundle $L$ satisfying $2\pi c_1(L,h)=\omega$, hence the first two
coefficients of the expansion differ from those in \cite{AMM}
(see also Remark \ref{R:ga}).
Corollary \ref{cor:disc1} is concerned with the behavior
of the Bergman kernel inside a neighborhood of the origin.
\subsection{Hyperbolic metrics with parabolic singularities}
We consider the Riemann sphere ${\mathbb P}^1={\mathbb C}\cup{\infty}$ with $m\geq3$ 
marked points $\Sigma=\{x_1,\ldots,x_m\}$. By using a M\"obius map we can assume 
that $x_m=\infty$. 
Let $Y={\mathbb P}^1\setminus\Sigma$.
By the uniformization theorem,
$Y$ is the quotient $\mathbb{H}/\Gamma$, where $\mathbb{H}$ is the upper half-plane, 
and $\Gamma\subset\operatorname{PSL}(2,\R)$
is a finitely generated torsion-free Fuchsian group acting on $\mathbb{H}$ by linear fractional
transformations. The Poincar\'e metric on $\mathbb{H}$ descends to $Y$ and
gives a complete K\"ahler metric $ds^2$ of constant curvature $-1$.
By \cite[Lemma 2]{TZ88} the metric $ds^2$ has the form $ds^2=e^\varphi|dz|^2$, where $\varphi$ 
is a smooth function on $Y$ verifying $\varphi_{z\overline z}=\frac{1}{2}\,e^\varphi$ and

\begin{equation}\label{e:cusp}
\varphi(z)=\begin{cases}
-2\log|z-x_j|-2\log\big|\!\log|z-x_j|\big|+O(1), & \text{as } z\to x_j,\;1\leq j\leq m-1,\\ 
-2\log|z|-2\log\big|\!\log|z|\big|+O(1), & \text{as }z\to x_m=\infty.
\end{cases}
\end{equation}
\begin{Lemma}\label{L:abP1}
For each $j=1,\ldots,m$, we have  $\alpha_j=\beta_j=3$.
\end{Lemma}
\begin{proof}
Since the curvature is constant $-1$ we have 
$\varphi_{z\overline{z}}=\frac12e^{\varphi}=\rho$. 
Without loss of generality we may assume $j=1$ and $x_1=0$. 
Near $z_1=0$,
we have \[\varphi(z)=-2\log|z|-2\log\big|\!\log|z|\big|+O(1).\] 
By \cite[Lemma 2]{TZ88}, there exist $C>0$ such that near $0$ we have
\begin{equation}\label{e:phiz}
|\varphi_z|\leq \frac{C}{|z|}\,\cdot
\end{equation}
In the rest of the proof we will denote by $C$ a constant may change from line to line.
By \cite[Lemma 2]{TZ88}, see also \cite[(1.7),\,(1.8)]{TZ88}, there exists 
$c_1,\ldots,c_{m-1}$, such that 
\begin{equation}\label{e:phizz}
\varphi_{zz}-\frac12\varphi_z^2=
\sum_{i=1}^{m-1}\left(\frac{1}{2(z-x_i)^2}+\frac{c_i}{z-x_i}\right)\,,
\end{equation}
(the right hand side is actually the Schwarzian derivative of the inverse of the projection 
$\mathbb{H}\to\mathbb{H}/\Gamma=Y$ and $c_1,\ldots,c_{m-1}$ 
are called accessory parameters). By \eqref{e:phiz}, \eqref{e:phizz}, 
\[
|\varphi_{zz}|\leq \frac{C}{|z|^2}\:\:\text{near $z_1=0$},
\]
and 
\[
\varphi_{zzz}=\varphi_z\,\varphi_{zz}-
\sum_{i=1}^{m-1}\left(\frac{1}{(z-z_i)^3}+\frac{c_i}{(z-z_i)^2}\right)
\]
Hence
\[|\varphi_{zzz}|\leq\frac{C}{|z|^3}\:\:\text{near $z_1=0$}.\]
Now $\rho_z=\varphi_{z\,z\,\overline{z}}=\frac12\varphi\,\varphi_z$, 
thus
\[
|\rho_z|=|\varphi_{z\,z\,\overline{z}}|
\leq\frac{C}{|z|^3}\:\:\text{near $z_1=0$}.\]
Further, $\psi=\varphi+2\log|z|$, hence $\psi_z=\varphi_z+z^{-1}$
and $\psi_{zzz}=\varphi_{zzz}+2z^{-3}$, 
$\psi_{z\,z\,\overline{z}}=\varphi_{z\,z\,\overline{z}}$.
We deduce that $\alpha_1=\beta_1=3$.
\end{proof}
By Theorem \ref{T:mt2} and Lemma \ref{L:abP1} we obtain:
\begin{Corollary}\label{cor:par}
Let $Y:={\mathbb P}^1\setminus\Sigma$ with $\Sigma=\{x_1,\ldots,x_m\}$, $m\geq3$, 
be endowed with the induced Poincar\'e metric of constant curvature $-1$.
Let $P_p(x)$ be the Bergman kernel function of $H^0_{(2)}(Y,K_{Y}^p)$.
 Then for any $j=1,\ldots,m$, and any compact set 
$K\subset{\mathbb P}^1$ with $K\cap\Sigma=\{x_j\}$ there exists
$C_j=C_j(K)>0$ such that if $x\in K$ then
\begin{equation}
\left|\frac{2\pi P_p(x)}{p}-1\right|
\leq C_j\, p^{-1/8}\dist(x,x_j)^{-3}\,,\:\:\text{for $p>C_j\dist(x,x_j)^{-24}$}.
\end{equation}
\end{Corollary}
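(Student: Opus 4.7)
The plan is to verify hypotheses (A) and (B$^\prime$) of Theorem~\ref{T:mt2} for the tuple $(X,\Sigma,\omega)=({\mathbb P}^1,\{x_1,\ldots,x_m\},\omega)$, insert the values $\alpha_j=\beta_j=3$ supplied by the preceding lemma, and then propagate the puncture-local estimate of Theorem~\ref{T:mt2} to the entire compact set $K$ via the standard on-diagonal asymptotic on compact subsets of $Y$.

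First I would check the assumptions. Fix any smooth Hermitian metric $\Omega$ on ${\mathbb P}^1$. The hyperbolic metric $\omega$ is smooth and K\"ahler on $Y={\mathbb P}^1\setminus\Sigma$ and complete there; by \eqref{e:cusp} it blows up near each $x_j$ (using the chart $w=1/z$ at $x_m=\infty$), hence $\omega\geq c\,\Omega$ on ${\mathbb P}^1$ for some continuous $c:{\mathbb P}^1\to(0,\infty)$. Constant Gauss curvature $-1$ combined with \eqref{e:ric3} yields $\ric_\omega=-\omega$, so (B$^\prime$) holds with $\lambda=1$; hypothesis (A) is automatic since $\Sigma$ is finite.

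Next, the preceding lemma gives $\alpha_j=\beta_j=3$ at every puncture; the cusp at $\infty$ is handled verbatim after passing to the coordinate $w=1/z$, in which the second line of \eqref{e:cusp} becomes the first. By \eqref{e:delta},
\[
\delta_j=\max\{8/3,\,8\beta_j/3,\,8\alpha_j\}=\max\{8/3,\,8,\,24\}=24.
\]
Since $R_\omega\equiv-1$, the prefactor in Theorem~\ref{T:mt2} reduces to $-2\pi/R_\omega=2\pi$, so that theorem furnishes a constant $\widetilde C_j>1$ with
\[
\Big|\frac{2\pi P_p(x)}{p}-1\Big|\leq \widetilde C_j\bigl(p^{-1/8}+p^{-3/8}\bigr)\dist(x,x_j)^{-3}\leq 2\widetilde C_j\,p^{-1/8}\dist(x,x_j)^{-3}
\]
for every $x\in B(x_j,R_j)\setminus\{x_j\}$ satisfying $p>\widetilde C_j\dist(x,x_j)^{-24}$, where I used $p^{-3/8}\leq p^{-1/8}$ for $p\geq 1$.

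Finally I would handle $K\setminus B(x_j,R_j)$, a compact subset of $Y$ on which $\dist(x,x_j)\geq R_j$. Since (B$^\prime$) coincides with \eqref{e:c1}, the on-diagonal expansion \eqref{e:bke1}--\eqref{e:bke2}, available in this setting by \cite[Theorem~3.11]{MM08}, gives $2\pi P_p(x)/p=1+O(p^{-1})$ uniformly on this compact set, which is strictly stronger than the required $p^{-1/8}\dist(x,x_j)^{-3}$ bound. Enlarging the constant to absorb both contributions produces the stated $C_j$. The only mildly delicate point is the chart change at $x_m=\infty$ needed to bring that cusp into the framework of \eqref{e:varphi}--\eqref{e:drho}; everything else is a direct substitution into Theorem~\ref{T:mt2} combined with the standard interior expansion.
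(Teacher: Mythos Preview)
Your proposal is correct and follows the same route the paper implicitly takes: apply Theorem~\ref{T:mt2} with the parameters $\alpha_j=\beta_j=3$ from the preceding lemma (giving $\delta_j=24$ and prefactor $-2\pi/R_\omega=2\pi$), and then cover the remainder of $K$ by the standard interior expansion on compact subsets of $Y$. The paper states the corollary without an explicit proof, so your write-up is in fact more detailed than the original; in particular, the paper leaves the passage from $B(x_j,R_j)$ to an arbitrary compact $K$ with $K\cap\Sigma=\{x_j\}$ implicit (it is the $K_X$-analogue of Theorem~\ref{T:mt3}, or equivalently the expansion recalled in Remark~\ref{R:ga}).
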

Note that the space $H^0_{(2)}(Y,K_{Y}^p)$ is the space of cusp form 
of weight $2p$ on $Y$, so $P_p(x)$ is the Bergman kernel of the cusp forms.

Let us now point out the interpretation of Corollaries \ref{cor:disc1} and \ref{cor:par}
in terms of classical Bergman kernels for function spaces. Let us consider a general
hyperbolic domain $Y\subset{\mathbb C}$,
i.\,e.\ ${\mathbb C}\setminus Y$ contains at least two points.
As above, $Y$ admits an induced Poincar\'e metric $ds^2=\rho(z)|dz|^2$ of constant curvature $-1$.
For $p\in\N$ we define the Petersson scalar product
\begin{equation}\label{e:pet}
\langle f,g\rangle_p:=\int_D f(z)\overline{g(z)}\rho^{1-p}(z)\,dm(z)\,,
\end{equation}
on the space
$L^2_p(Y):=\big\{\text{$f:Y\to\C$ measurable}: \|f\|_p^2=\langle f,f\rangle_p<\infty\big\}$,
see e.\,g.\ \cite[p.\,88]{Kra}, where $dm(z)=\frac{i}{2}dz\wedge d\overline{z}$ 
is the Euclidean volume form. Set
\begin{equation*}
\mathcal{A}^2_p(Y):=\big\{f\in L^2_p(Y) : \text{$f$ holomorphic}\big\}\,,
\end{equation*}
which is a closed subspace of $L^2_p(Y)$. Denote by $\Pi_p:L^2_p(Y)\to\mathcal{A}^2_p(Y)$
the orthogonal projection and by $\Pi_p(\cdot,\cdot)$ its reproducing kernel. 
If $\{f_j\}$ is an orthonormal basis of 
$(\mathcal{A}^2_p(Y),\langle\cdot,\cdot\rangle)$
then $\Pi_p(z,w)=\sum_jf_j(z)\overline{f_j(w)}$. 
The restriction on the diagonal $z\mapsto\Pi_p(z):=\sum_j|f_j(z)|^2$
is the Bergman
kernel function for the Petersson scalar product.
Note that any element $S\in H^0_{(2)}(Y,K_{Y}^p)$ is of the form
$S=f (dz)^{\otimes p}$, where $f\in\mathcal{O}(Y)$. 
Let $S'=f' (dz)^{\otimes p}$ be a further element of $H^0_{(2)}(Y,K_{Y}^p)$.
By \eqref{e:dz1},
\[\langle S,S'\rangle_\omega\,\omega=f\,\overline{f'}\,|(dz)^{\otimes p}|^2_\omega\,\omega
=2^pf\,\overline{f'}\,\rho^{1-p}dm(z),\]
hence $\langle S,S'\rangle=2^p\langle f,f'\rangle_p$. The map 
$\mathcal{A}^2_p(Y)\to H^0_{(2)}(Y,K_{Y}^p)$, $f\mapsto 2^{-p/2}f(dz)^{\otimes p}$
is an isometry. Therefore
\begin{equation}\label{e:Bpet}
P_p(z)=\Pi_p(z)\rho(z)^{-p}\,,\:\:z\in Y.
\end{equation}
By Corollary \ref{cor:par} and \eqref{e:Bpet} we obtain:
\begin{Corollary}\label{cor:par2}
Let $Y={\mathbb P}^1\setminus\Sigma$ with 
$\Sigma=\{x_1,\ldots,x_m\}$, $m\geq3$, and
write $ds^2=\rho(z)|dz|^2$
for the induced Poincar\'e metric  of constant curvature $-1$ on $Y$.
Let $\Pi_p(z)$ be the Bergman kernel function associated to the 
Petersson scalar product \eqref{e:pet}.
 Then for any $j=1,\ldots,m$, and any compact set 
$K\subset{\mathbb P}^1$ with $K\cap\Sigma=\{x_j\}$ there exists
$C_j=C_j(K)>0$ such that if $z\in K$ then
\begin{equation}\label{e:Bexp004}
\left|\frac{2\pi \Pi_p(z)\rho(z)^{-p}}{p}-1\right|
\leq C_j\, p^{-1/8}\dist(z,x_j)^{-3}\,,\:\:\text{for $p>C_j\dist(z,x_j)^{-24}$}.
\end{equation}
\end{Corollary}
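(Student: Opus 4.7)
The plan is to reduce Corollary \ref{cor:par2} to Corollary \ref{cor:par} via the identification \eqref{e:Bpet} between the Bergman kernel of cusp forms and the reproducing kernel for the Petersson scalar product. Since all the analytic work has already been done in Corollary \ref{cor:par}, this is essentially a substitution; the only task is to verify the dictionary between the two points of view and check that the relevant estimates transform as claimed.

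First, I would recall from the discussion preceding the statement that the map $f\mapsto 2^{-p/2}f\,(dz)^{\otimes p}$ is an isometry from $(\mathcal{A}^2_p(Y),\langle\cdot,\cdot\rangle_p)$ onto $H^0_{(2)}(Y,K_Y^p)$ with its $L^2$-inner product induced by the Poincar\'e metric $\omega=\tfrac{i}{2}\rho\,dz\wedge d\overline z$. Consequently, any orthonormal basis $\{f_\ell\}$ of $\mathcal{A}^2_p(Y)$ gives an orthonormal basis $\{S_\ell=2^{-p/2}f_\ell(dz)^{\otimes p}\}$ of $H^0_{(2)}(Y,K_Y^p)$. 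Using $|(dz)^{\otimes p}|^2_\omega=2^p\rho^{-p}$ from \eqref{e:dz1}, this yields
\begin{equation*}
P_p(z)=\sum_\ell|S_\ell(z)|^2_{h_p}=\sum_\ell 2^{-p}|f_\ell(z)|^2\cdot 2^p\rho(z)^{-p}=\Pi_p(z)\rho(z)^{-p},
\end{equation*}
which is precisely \eqref{e:Bpet}.

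Next, I would simply plug this identity into the conclusion of Corollary \ref{cor:par}. For $x_j\in\Sigma$ and $x$ in a small compact neighbourhood of $x_j$, Corollary \ref{cor:par} supplies a constant $C_j>0$ such that
\begin{equation*}
\left|\frac{2\pi P_p(x)}{p}-1\right|\leq C_j\,p^{-1/8}\dist(x,x_j)^{-3}\quad\text{whenever } p>C_j\dist(x,x_j)^{-24}.
\end{equation*}
Replacing $P_p(z)$ by $\Pi_p(z)\rho(z)^{-p}$ yields the claimed estimate \eqref{e:Bexp004}, with the same threshold on $p$.

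There is no real obstacle here; the proof is at most a single line. The only thing to be careful about is that the constants $C_j$ and the exponent $24=\delta_j$ in the hypothesis on $p$ are inherited from Corollary \ref{cor:par} (hence ultimately from Theorem \ref{T:mt2} together with the computation $\alpha_j=\beta_j=3$ for the Poincar\'e metric on $Y$), and that the isometry factor $2^p$ in $|(dz)^{\otimes p}|^2_\omega$ exactly cancels the $2^{-p/2}$ from the rescaled basis, so no additional $p$-dependent constants enter.
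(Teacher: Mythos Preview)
Your proposal is correct and follows exactly the paper's approach: the paper simply states that Corollary~\ref{cor:par2} follows from Corollary~\ref{cor:par} together with the identity \eqref{e:Bpet}, and your argument spells out precisely this substitution, including the verification that the normalization factors cancel.
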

A similar statement can proved for the Bergman kernel $\Pi_p(z)$
of the Petersson scalar product on $\D^*$, by using Corollary \ref{cor:disc1}. 
For the explicit formula of the Bergman kernel $\Pi_p(z)$ for $\D^*$ see
\cite[\S5]{EP96}.
\begin{Remark}\label{R:ga}
Let $Y\subset\mathbb{P}^1$ be an arbitrary hyperbolic domain
endowed with the induced Poincar\'e metric of constant Gauss curvature $-1$.
Let $P_p(x)$ be the Bergman kernel function of $H^0_{(2)}(Y,K_{Y}^p)$. 
By
\cite[Theorem\,6.1.1]{MM07}, \cite[Theorem\,3.11]{MM08}, $P_p(x)$ has a full asymptotic
expansion \eqref{e:bke1} on compact sets of $Y$. 
Actually, by \cite[Corollary 2.4]{AMM} the
expansion reads
\begin{equation}
P_p(x)=\frac{p}{2\pi}-\frac{1}{4\pi}+O(p^{-\infty})\,,\:\:p\to\infty.
\end{equation}
Indeed, $b_0=\frac{1}{2\pi}$ by \eqref{e:ric3}. The Gauss curvature
of $c_1(K_{Y},h^{K_Y})$ being $-2\pi$, the scalar curvature of
$c_1(K_{Y},h^{K_Y})$ is $r^Y=-4\pi$ by \eqref{e:ricscal}. Hence $b_1=-\frac{1}{4\pi}$\,.
All other coefficients vanish by \cite[Corollary 2.4]{AMM}, so the remainder is $O(p^{-\infty})$.

If $Y={\mathbb P}^1\setminus\Sigma$ with $\Sigma=\{x_1,\ldots,x_m\}$, $m\geq3$, 
we refer to \cite{AMM} for a weighted estimate near the punctures
for the global Bergman kernel compared to the Bergman kernel of the punctured
disc.
\end{Remark}
\subsection{Hyperbolic metrics with conical singularities}
We consider again the Riemann sphere ${\mathbb P}^1={\mathbb C}\cup{\infty}$ with $m\geq3$ 
marked points $\Sigma=\{x_1,\ldots,x_m\}$. 
By using a M\"obius map we can assume 
that 
$x_m=\infty$. Suppose that 
$a_j$, $1\leq j\leq m$, are real numbers such that 
\begin{equation}\label{e:aj}
a_j<1\,,\:\:\:\sum_{j=1}^ma_j>2.
\end{equation}
Then 
$$Y:={\mathbb P}^1\setminus\{x_1,\ldots,x_m\}={\mathbb C}\setminus\{x_1,\ldots,x_{m-1}\}$$ 
admits a unique K\"ahler metric of constant curvature $-1$, which on 
${\mathbb P}^1$ is with conical singularities of order $a_j$ (or angle $2\pi(1-a_j))$ 
at $x_j$, $1\leq j\leq m$ (see \cite[Section 2]{TZ03} and references therein). 
This metric has the form $ds^2=e^\varphi|dz|^2$, where $\varphi$ 
is a smooth function on $Y$ verifying 
\begin{equation}\label{e:con}
\varphi_{z\overline z}=\frac{1}{2}\,e^\varphi\,,\,\;\;\varphi(z)=\begin{cases}
-2a_j\log|z-x_j|+O(1), & \text{as } z\to x_j,\;1\leq j\leq m-1,\\ 
-2(2-a_m)\log|z|+O(1), & \text{as }z\to x_m=\infty.
\end{cases}
\end{equation}
\begin{Lemma}
For each $j=1,\ldots,m$, we have  $\alpha_j=\beta_j=1+2a_j$.
\end{Lemma}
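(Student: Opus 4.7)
The approach parallels the cuspidal case, with the singular model $-2\log|z|-2\log|{\log|z|}|$ replaced by the conical model $-2a_j\log|z|$. I would first reduce to $x_j=0$; the case $j=m$ reduces to an ordinary conical singularity of order $a_m$ at the origin via $w=1/z$, using $|dz|^2=|w|^{-4}|dw|^2$ together with \eqref{e:con}. By \eqref{e:dz} the weight of $h^{K_Y}$ is $\varphi_j=\tfrac12(\varphi-\log 2)$, so matching with \eqref{e:con} identifies $\nu_j=-a_j$ in the decomposition \eqref{e:varphi}, and
\[
\psi_j(z)=\tfrac12 u(z)-\tfrac{\log 2}{2},\qquad u(z):=\varphi(z)+2a_j\log|z|,\qquad \rho_j(z)=|z|^{-2a_j}e^{u(z)},
\]
with $u$ bounded near $0$ and satisfying the Liouville equation $u_{z\overline z}=\tfrac12|z|^{-2a_j}e^{u}$ on the punctured disc.

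The key input is the sharp asymptotic expansion
\[
u(z)=u(0)+c_j\,|z|^{2(1-a_j)}+O\bigl(|z|^{4(1-a_j)}\bigr),
\]
forced by matching the leading source $\tfrac12|z|^{-2a_j}e^{u(0)}$ against the particular solution $4\partial_z\partial_{\overline z}|z|^{2(1-a_j)}=4(1-a_j)^2|z|^{-2a_j}$, together with the fact that the residual harmonic correction is bounded on the punctured disc and hence extends harmonically across $0$ with bounded derivatives of all orders. This is the local regularity statement for the uniformizing hyperbolic metric with prescribed conical data from \cite[Sect.~2]{TZ03}, which I would invoke here in place of the Schwarzian identity \eqref{e:phizz} used in the parabolic lemma.

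Granted the expansion, both estimates follow by direct differentiation. For $\alpha_j$: real third derivatives of $(x^2+y^2)^{1-a_j}$ have exact size $|z|^{2(1-a_j)-3}=|z|^{-(1+2a_j)}$, the $k$-th correction $|z|^{2k(1-a_j)}$ contributes $|z|^{2k(1-a_j)-3}$, strictly less singular for $a_j<1$ and $k\geq 2$, and the smooth harmonic remainder contributes only bounded derivatives. Hence $|D^\mu\psi_j|\leq A|z|^{-(1+2a_j)}$ for $|\mu|=3$, giving $\alpha_j=1+2a_j$. For $\beta_j$: $\partial_z\rho_j=(-a_j/z+u_z)\rho_j$, whose first piece is $(-a_j/z)|z|^{-2a_j}e^u\sim|z|^{-(1+2a_j)}$ and whose second piece is $O(|z|^{-2a_j})$, negligible near $0$ when $a_j>0$; the analogous bound holds for $\partial_{\overline z}\rho_j$, yielding $\beta_j=1+2a_j$. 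The main obstacle I anticipate is rigorously establishing the sharp expansion above: the generic PDE/Schwarzian bounds of the cuspidal proof yield only $\alpha_j,\beta_j\leq 3$, and sharpening to $1+2a_j$ requires the finer regularity of the conical Liouville equation (equivalently, the smoothness of the pull-back of the hyperbolic metric under the uniformizing map $\zeta=z^{1-a_j}$), together with the crucial point that the bounded harmonic correction extends across $0$ in the $z$-coordinate rather than in $\zeta$.
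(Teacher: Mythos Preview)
Your plan is correct in outline and reaches the right exponents, but the paper takes a more direct route. Rather than positing an asymptotic expansion for $u=\varphi+2a_j\log|z|$ and then worrying about how to justify it, the paper invokes the developing--map formula from \cite[Lemma~2,\,(9)]{TZ03},
\[
e^{\varphi}=\frac{4\,|w'|^{2}}{(1-|w|^{2})^{2}},\qquad w(z)=z^{\,1-a_j}g(z),\quad g\text{ holomorphic},\ g(0)\neq0,
\]
and with $F(z)=(1-a_j)g(z)+zg'(z)$ obtains at once
\[
\psi=\log|F|^{2}-2\log(1-|w|^{2}).
\]
This already \emph{is} the decomposition you are aiming for: a harmonic piece $\log|F|^2$ with bounded derivatives of all orders, plus a piece governed by powers of $|w|^{2}\sim|z|^{2(1-a_j)}$. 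The paper then computes $\widetilde\psi_{zzz}$ for $\widetilde\psi=-\log(1-|w|^2)$ by the chain rule and uses only the elementary estimates $|w^{(k)}(z)|\le C|z|^{1-a_j-k}$ to get $|\psi_{zzz}|\le C|z|^{-1-2a_j}$; similarly $\rho_z=\tfrac12 e^{\varphi}\varphi_z$ together with $|\varphi_z|\le C|z|^{-1}$ and $e^{\varphi}\le C|z|^{-2a_j}$ gives $\beta_j=1+2a_j$. So the developing map is precisely the ``finer regularity of the conical Liouville equation'' you flag as the main obstacle, and working with it directly dissolves that obstacle rather than leaving it as a black box to be invoked.

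One small correction to your $\beta_j$ step: you assert $u_z\rho_j=O(|z|^{-2a_j})$, which presumes $u_z=O(1)$. But $\partial_z|z|^{2(1-a_j)}=(1-a_j)\,\overline{z}\,|z|^{-2a_j}$ has size $|z|^{1-2a_j}$, which blows up when $a_j>\tfrac12$. The honest bound there is $u_z\rho_j=O(|z|^{1-4a_j})$; this is still dominated by $|z|^{-(1+2a_j)}$ since $a_j<1$, so your conclusion survives, but the intermediate estimate should be adjusted.
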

\begin{proof}
Since the curvature is constant $-1$ we have 
$\varphi_{z\overline{z}}=\frac12e^{\varphi}=\rho$. 
Without loss of generality we may assume $j=1$ and $x_1=0$. 
By \cite[Lemma\,2 and (9)]{TZ03},  we have
\begin{equation}\label{e:phi}
e^\varphi=\frac{4|w'|}{(1-|w|^2)^2}\,,\:\:\text{with $w(z)=z^{1-a_1}g(z)$,
$g$ holomorphic near $x_1=0$, $g(0)\neq0$.}
\end{equation}
The function $w$ is a multi-valued meromorphic function on $\mathbb{P}^1$
with ramification points at $\{x_1,\ldots,x_m\}$, and it becomes single-valued on the universal 
cover of $Y$.

Let us consider the function $F(z)=(1-a_1)g(z)+zg'(z)$,
holomorphic near $0$, $F(0)=(1-a_1)g(0)\neq0$.
By \eqref{e:phi}, 
\[
\varphi(z)=-2a_1\log|z|+\psi(z)+\log4\,,\:\:\text{where
$\psi(z)=\log|F(z)|^2-2\log(1-|w|^2)$}.
\]
Let us denote
$\widetilde{\psi}(z)=-\log(1-|w|^2)$. We have $\psi_z=(F'/F)+2\widetilde\psi_z$,
with $(F'/F)$ holomorphic near $0$. We have thus to estimate only the derivatives of $\widetilde\psi$.
By a direct computation we obtain
\[
\widetilde\psi_{zzz}=\frac{2(w'\overline{w})^3}{(1-|w|^2)^3}
+\frac{3w'w''\overline{w}^2}{(1-|w|^2)^2}+\frac{w'''\overline{w}}{(1-|w|^2)}\,\cdot
\]
Taking into account \eqref{e:phi}, there exists $C>0$ such that near $0$ we have
\begin{equation}\label{e:w}
|w|\leq C|z|^{1-a_1}\,,\:\:|w'|\leq C|z|^{-a_1}\,,\:\:|w''|\leq C|z|^{-1-a_1}
\,,\:\:|w'''|\leq C|z|^{-2-a_1}\,.
\end{equation}
By \eqref{e:phi} and \eqref{e:w} we infer that near $0$
\[
|\widetilde\psi_{zzz}|\leq C(|z|^{3-6a_1}+|z|^{1-4a_1}+|z|^{-1-2a_1})\leq C|z|^{-1-2a_1}.
\]
We have used here that $a_1<1$. The previous estimate implies
\begin{equation}\label{e:psizzz}
|\psi_{zzz}|\leq C|z|^{-1-2a_1}.
\end{equation}
We estimate now $\rho_z$ and $\psi_{zz\overline{z}}$ simultaneously.
We have $\varphi_{z\overline{z}}=\psi_{z\overline{z}}=
2\widetilde\psi_{z\overline{z}}=\frac12 e^{\varphi}=\rho$
so $\rho_z=\psi_{zz\overline{z}}=\frac12 e^{\varphi}\varphi_z$.
On the other hand, $\varphi_z=-\frac{a_1}{z}+\frac{F'}{F}+2\widetilde\psi_z$,
where $\widetilde\psi_z=\frac{w'\overline{w}}{1-|w|^2}$, so $|\widetilde\psi_z|\leq C|z|^{1-2a_1}$.
We deduce that $|\varphi_z|\leq C|z|^{-1}$. Since $e^\varphi\leq C|z|^{-2a_1}$ we deduce
\begin{equation}\label{e:roz}
|\rho_z|=|\psi_{zz\overline{z}}|\leq C |z|^{-1-2a_1}\,.
\end{equation}
By symmetry, we obtain similar estimates as \eqref{e:psizzz}, \eqref{e:roz} 
for $\psi_{\overline{z}\,\overline{z}\,\overline{z}}$ and $\rho_{\overline{z}}$, 
$\psi_{z\,\overline{z}\,\overline{z}}$, which show that $\alpha_1=\beta_1=1+2a_1$.
\end{proof}
The associated $(1,1)$-form to $ds^2$ is $\omega=\frac{i}{2}\,e^\varphi dz\wedge d\overline z$, thus 
$$\ric_\omega=-i\,\partial\db\log\frac{e^\varphi}{2}
=-i\,\varphi_{z\overline z}\,dz\wedge d\overline z=-\omega.$$ 
Note that $\omega$ induces a Hermitian metric $h_\omega$ on 
$K_Y=K_{{\mathbb P}^1}\vert_Y=\cO_{{\mathbb P}^1}(-2)\vert_Y$ with curvature 
$$c_1(K_Y,h_\omega)=-\frac{1}{2\pi}\,\ric_\omega=\frac{1}{2\pi}\,\omega.$$
\par In the chart $\mathbb C$ the metric $h_\omega$ has weight 
$\varphi_0=\frac{1}{2}\,\log\frac{e^\varphi}{2}=\frac{\varphi}{2}-\frac{\log 2}{2}$. 
Letting $z=1/\zeta$ we obtain in coordinate $\zeta$ near $x_m=\infty$ that 
$$ds^2=e^{\varphi(1/\zeta)}\,\frac{|d\zeta|^2}{|\zeta|^4}
=e^{\varphi'(\zeta)}|d\zeta|^2,\;\varphi'(\zeta)=\varphi(1/\zeta)-4\log|\zeta|
=-2a_m\log|\zeta|+O(1)\text{ as }\zeta\to0.$$
As before, in this chart the weight of 
$h_\omega$ is $\varphi_1=\frac{\varphi'}{2}-\frac{\log 2}{2}$. 
It follows that $h_\omega$ extends to a singular Hermitian metric on $K_{{\mathbb P}^1}$ 
which does not have positive curvature measure since $a_j$ cannot be all $\leq0$.

\par Note that $\omega\geq c\,\Omega$ for some positive metric $\Omega$ on 
${\mathbb P}^1$ if and only if $a_j\geq0$ for all $1\leq j\leq m$. 
We conclude that $({\mathbb P}^1,\Sigma,\omega,K_{{\mathbb P}^1},h_\omega)$ 
verify assumptions (A)-(C) if and only if $a_j\geq0$, $1\leq j\leq m$, 
hence Theorem \ref{T:mt1} applies in this case.

\begin{Corollary}\label{cor:con}
Let $Y:={\mathbb P}^1\setminus\Sigma$ with $\Sigma=\{x_1,\ldots,x_m\}$, $m\geq3$, 
be endowed with the K\"ahler metric $\omega$ of constant curvature $-1$ and conical 
singularities of order $a_j\in[0,1)$ at $x_j$, cf.\ \eqref{e:aj}, \eqref{e:con}.
Let $P_p$ be the Bergman kernel function of $H^0_{(2)}(Y,K_Y^p)$
associated to $\omega$ and $h_\omega$.
 Then for any $j=1,\ldots,m,$ and any compact set 
$K\subset{\mathbb P}^1$ with $K\cap\Sigma=\{x_j\}$ there exists
$C_j=C_j(K)>0$ such that, for $x\in K$ and $p>C_j\dist(x,x_j)^{-8(1+2a_j)}$,
\begin{equation}
\left|\frac{2\pi P_p(x)}{p}-1\right|
\leq C_j\, p^{-1/8}\dist(x,x_j)^{-(1+2a_j)}\,.
\end{equation}
\end{Corollary}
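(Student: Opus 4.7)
The plan is to apply Theorem \ref{T:mt1} directly (and the semi-global Theorem \ref{T:mt3} for the portion of $K$ bounded away from $x_j$) to the data $(\mathbb{P}^1,\Sigma,\omega,K_{\mathbb{P}^1},h_\omega)$. The paragraph preceding the statement has already verified that assumptions (A)--(C) hold exactly when $a_j\in[0,1)$, and the lemma above has supplied the exponents $\alpha_j=\beta_j=1+2a_j$. What remains is a routine identification of the quantities that appear in \eqref{e:Bexp00}.

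First I would record that, because $\omega$ has constant Gauss curvature $-1$, formula \eqref{e:ric3} gives $c_1(K_Y,h_\omega)=-(R_\omega/2\pi)\,\omega=\omega/(2\pi)$, so the ratio $\omega_x/c_1(K_{\mathbb{P}^1},h_\omega)_x$ equals $2\pi$. This is precisely the factor $2\pi P_p(x)/p$ on the left-hand side of the statement. Next I would substitute $\alpha_j=\beta_j=1+2a_j$ into the definition \eqref{e:delta}. Since $a_j\geq 0$ forces $1+2a_j\geq 1$, the maximum $\max\{8/3,\,8(1+2a_j)/3,\,8(1+2a_j)\}$ is always attained at the last entry, so $\delta_j=8(1+2a_j)$, matching the threshold $p>C_j\dist(x,x_j)^{-8(1+2a_j)}$ in the corollary.

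With these inputs, Theorem \ref{T:mt1} yields, for $x\in B(x_j,R_j)\setminus\{x_j\}$ and $p>C_j\dist(x,x_j)^{-8(1+2a_j)}$,
\[
\Bigl|\frac{2\pi P_p(x)}{p}-1\Bigr|\leq C_j\bigl(p^{-1/8}+p^{-3/8}\bigr)\dist(x,x_j)^{-(1+2a_j)}.
\]
Since $p\geq 1$ gives $p^{-3/8}\leq p^{-1/8}$, absorbing the factor of $2$ into $C_j$ produces the claimed inequality on the coordinate disc around $x_j$. For $x\in K\setminus B(x_j,R_j)$, $\dist(x,x_j)$ is bounded below by a positive constant, so Theorem \ref{T:mt3} delivers a uniform bound of order $p^{-1/8}$, which is in turn dominated by $C_j\,p^{-1/8}\dist(x,x_j)^{-(1+2a_j)}$ after enlarging $C_j$.

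I expect no genuine obstacle here: the analytic content was already packaged into Theorem \ref{T:mt1} and into the lemma identifying $\alpha_j=\beta_j=1+2a_j$ via the developing map $w=z^{1-a_j}g(z)$ and the Liouville identity $\varphi_{z\overline z}=\tfrac12 e^\varphi$. The only point that requires a second glance is confirming that $a_j\geq 0$ forces $\delta_j$ to be exactly $8(1+2a_j)$ rather than $8/3$, which follows as observed above.
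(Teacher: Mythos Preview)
Your argument is correct and matches the paper's intended derivation: the corollary is stated without a separate proof precisely because the preceding discussion verifies hypotheses (A)--(C) and the lemma gives $\alpha_j=\beta_j=1+2a_j$, so one just plugs into Theorem~\ref{T:mt1} (with Theorem~\ref{T:mt3} handling the part of $K$ away from $x_j$), exactly as you do. Your identifications $\omega_x/c_1(K_{\mathbb{P}^1},h_\omega)_x=2\pi$ and $\delta_j=8(1+2a_j)$ are correct.
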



\section{Riemann sphere with two conical singularities}\label{S:rugby}

In this section we calculate explicitly the Bergman kernel and study its scaling asymptotics near conical singularities using rescaled coordinates involving the magnetic length, suggested by \cite{CCLW}. We also interpret our results in terms of the density of states on the lowest Landau level.

\subsection{Metrics with conical singularities}\label{SS:rugby} 
We take the line bundle $L={\mathcal O}(1)$ on the projective space 
${\mathbb P}^1$, endowed with the Hermitian metric $h_a$ given by 
the logarithmically homogeneous plurisubharmonic function on ${\mathbb C}^2$,
\begin{equation}\label{e:phia}
\varphi_a(t,z)=\frac{1}{2a}\,\log(|t|^{2a}+|z|^{2a})\,,\,\;0<a\leq1.
\end{equation}
Consider the standard embedding $z\in{\mathbb C}\hookrightarrow[1:z]\in{\mathbb P}^1$. 
Then $\omega_a:=c_1(L,h_a)$ is given by
\begin{equation}\label{e:oma}
\omega_a|_{_{\mathbb C}}=dd^c\varphi_a(1,z)=
i\,\frac{a}{2\pi |z|^{2(1-a)}(1+|z|^{2a})^2}\,dz\wedge d\overline z\,,
\end{equation}
and is the K\"ahler form associated to the metric 
$$ds^2=\frac{a}{\pi |z|^{2(1-a)}(1+|z|^{2a})^2}\,|dz|^2$$ 
on ${\mathbb P}^1$ with conical singularities of order $a$ 
(or angle $2\pi(1-a))$ at $0$ and $\infty$. 
This surface is sometimes also called american football or spindle. 
Thus $\omega_a$ is polarized by $(L,h_a)$. Moreover, $\int_{{\mathbb P}^1}\omega_a=1$ and 
$$\ric_{\omega_a}
=-i\,\partial\overline\partial\,\log\frac{a}{2\pi |z|^{2(1-a)}(1+|z|^{2a})^2}
=4\pi a\omega_a+2\pi(1-a)(\delta(0)+\delta(\infty))$$
in the sense of currents on ${\mathbb P}^1$, 
where $\delta(0)=\frac{i}{2}\,\delta_0\,dz\wedge d\overline z$ and $\delta_0$ 
is the Dirac measure at $0$. So $\omega_a$ has constant Gauss curvature on 
${\mathbb C}^\star$ (and also on ${\mathbb P}^1$, in the sense of distributions),
$$R_{\omega_a}=\frac{\ric_{\omega_a}}{\omega_a}=4\pi a\,.$$
Let us define the function $\psi$ on ${\mathbb P}^1$,
\begin{equation}\label{e:psi}
\psi([t:z])=\frac{\nu}{2a}\,\log\frac{|z|^{2a}}{|t|^{2a}+|z|^{2a}}\,,\,\text{ where }\nu\in\mathbb R\,,
\end{equation}
and consider on $L^p={\mathcal O}(p)$ the singular Hermitian metric 
\begin{equation}\label{e:hp}
h_p=e^{-2\psi}\,h_a^{\otimes p}\,.
\end{equation}
We have that 
\begin{equation}\label{e:c2}
c_1(L^p,h_p)=(p-\nu)\omega_a+\nu\delta(0)\,.
\end{equation}
The motivation for adding the weight $\psi$ in the metric $h_p$
is to create a $\delta$-distribution independent of $p$ in the curvature, which can
be interpreted as the Aharonov-Bohm flux in the magnetic field, 
see Section \ref{SS:dsLLL}.

Let $P^{a,\nu}_p$ be the Bergman kernel of the Hilbert space 
$H^0_{(2)}({\mathbb P}^1\setminus\{0,\infty\},L^p)$ 
of $L^2$-integrable holomorphic sections of $L^p$ 
relative to the metrics $h_p$ and $\omega_a$. Let $(\cdot,\cdot)_p$ 
denote the corresponding inner product. 

\begin{Proposition}\label{P:rugby}
In the above setting, we have that 
\begin{equation}\label{e:Pp}
P_p(z):=P^{a,\nu}_p(z)=\frac{|z|^{2(j_0-\nu)}}{(1+|z|^{2a})^{\frac{p-\nu}{a}}}\,
\sum_{j=0}^{p-j_0}\frac{|z|^{2j}}{B\left(1+\frac{j+j_0-\nu}{a}\,,1
+\frac{p-j-j_0}{a}\right)}\,,\,\;z\in\mathbb C\,,
\end{equation}
where $B$ is the Euler Beta function,  
$$j_0=\max\{\floor{\nu-a}+1,0\}=\left\{\begin{array}{ll}0,
\mbox{ \hspace{17.1mm} if $\nu<a$},\\ \floor{\nu-a}+1,
\mbox{ if  $\nu\geq a$}, \end{array}\right.$$
and $\floor{x}$ is the largest integer $\leq x$. 
Moreover, $P_p(z)=O\big(|z|^{2(j_0-\nu)}\big)$ for $z$ near 0, and 
$$j_0-\nu=\left\{\begin{array}{ll}-\nu,\mbox{ \hspace{7mm} if $\nu<0$},\\
-\{\nu\},\mbox{ \hspace{2.7mm} if $\nu\geq0$ and $0\leq\{\nu\}<a$},\\
1-\{\nu\},\mbox{ if $\nu\geq0$ and $a\leq\{\nu\}<1$}, 
\end{array}\right.$$
where $\{x\}:=x-\floor{x}$. In particular $0<P_p(0)<+\infty$ 
if and only if $\nu\in\mathbb N$, in which case $P_p(0)=\frac{p-\nu}{a}+1$.
\end{Proposition}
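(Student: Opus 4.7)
My plan is to exploit the full $S^1$-symmetry of the data to diagonalize the Bergman kernel explicitly. Working in the affine chart $\{t=1\}$ with coordinate $z$, the weight of $h_p$ in the standard frame $e^p$ is
\[\varphi_p(z)=p\varphi_a(1,z)+\psi([1:z])=\frac{p-\nu}{2a}\log(1+|z|^{2a})+\nu\log|z|,\]
which together with $\omega_a$ from \eqref{e:oma} is invariant under $z\mapsto e^{i\theta}z$. I identify $H^0_{(2)}(\mathbb{P}^1\setminus\{0,\infty\},L^p)$ with the global sections $s(z)e^p$ of $\mathcal{O}(p)$ (so $s$ a polynomial of degree $\leq p$) having finite $L^2$ norm; by rotational invariance of $(\cdot,\cdot)_p$ distinct monomials $z^j$ are pairwise orthogonal, so an orthogonal basis consists of those $z^j$ with $\|z^j\|_p<\infty$.

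Next I compute $\|z^j\|_p^2$ for integer $j$. Using \eqref{e:oma}, passing to polar coordinates and substituting $u=|z|^{2a}$ collapses the integral into a standard Beta integral,
\[\|z^j\|_p^2=\int_0^\infty u^{(j-\nu)/a}(1+u)^{-(p-\nu)/a-2}\,du=B\!\left(1+\tfrac{j-\nu}{a},\,1+\tfrac{p-j}{a}\right),\]
which is finite exactly when both entries are positive, i.e.\ $\nu-a<j<p+a$. For polynomial sections $0\leq j\leq p$ the upper constraint is automatic (since $0<a\leq 1$) and the lower constraint reduces to $j\geq j_0=\max\{\lfloor\nu-a\rfloor+1,\,0\}$.

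From \eqref{e:BFS1} the Bergman kernel is then
\[P_p(z)=e^{-2\varphi_p(z)}\sum_{j=j_0}^{p}\frac{|z|^{2j}}{\|z^j\|_p^2}=\frac{|z|^{-2\nu}}{(1+|z|^{2a})^{(p-\nu)/a}}\sum_{j=j_0}^{p}\frac{|z|^{2j}}{B(1+(j-\nu)/a,\,1+(p-j)/a)},\]
and re-indexing $k=j-j_0$ while factoring $|z|^{2j_0}$ out of the sum produces \eqref{e:Pp} exactly.

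For the behavior near $z=0$, the $k=0$ term dominates and gives $P_p(z)=O(|z|^{2(j_0-\nu)})$. The piecewise formula for $j_0-\nu$ I obtain by writing $\nu=\lfloor\nu\rfloor+\{\nu\}$ and observing that $\lfloor\nu-a\rfloor$ equals $\lfloor\nu\rfloor-1$ when $\{\nu\}<a$ and $\lfloor\nu\rfloor$ when $\{\nu\}\geq a$, together with the separate case $\nu<0$ where the $\max$ with $0$ is binding. Finiteness and positivity of $P_p(0)$ force $j_0-\nu=0$, which among these three cases occurs only when $\nu\in\mathbb{N}$; then $j_0=\nu$ and $P_p(0)=1/B(1,\,1+(p-\nu)/a)=(p-\nu)/a+1$ using $B(1,y)=1/y$. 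The entire argument is an elementary explicit calculation enabled by the symmetry; the only delicate point is the floor/fractional-part bookkeeping that determines $j_0$, which is routine but not fully automatic.
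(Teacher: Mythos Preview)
Your argument is essentially the paper's: orthogonality of monomials by rotational symmetry, evaluation of $\|z^j\|_p^2$ as a Beta integral, and summation; you additionally spell out the floor/fractional-part bookkeeping that the paper dismisses as ``straightforward''.

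One step you assert without proof is the identification of $H^0_{(2)}(\mathbb{P}^1\setminus\{0,\infty\},L^p)$ with $L^2$ \emph{polynomial} sections of $\mathcal{O}(p)$. A priori, a holomorphic section of $L^p$ on $\mathbb{C}^*$ is $s(z)e^p$ with $s$ a Laurent series, and one must argue that the $L^2$ condition kills all powers outside $\{0,\ldots,p\}$. Your own Beta computation gives $\|z^j\|_p<\infty$ iff $\nu-a<j<p+a$; since $0<a\leq1$ the upper bound forces $j\leq p$, but the lower bound forces $j\geq0$ only when $\nu\geq a-1$. For $\nu<a-1$ (a subcase of the $\nu<0$ branch in the statement) the monomials $z^j$ with $\lfloor\nu-a\rfloor+1\leq j<0$ are also $L^2$ and must appear in the Bergman sum, so the $\max$ with $0$ in the definition of $j_0$ is not right there. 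The paper's proof begins ``For $0\leq j\leq p$\dots'' and thus elides exactly the same point, so this is a shared gap rather than a defect specific to your approach; for $\nu\geq a-1$ (in particular for all $\nu\geq0$) your argument and the stated formula are correct.
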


\begin{proof} For $0\leq j\leq p$ we obtain that 
$$\|z^j\|^2_p=\int_{\mathbb C}\frac{|z|^{2(j-\nu)}}{(1+|z|^{2a})^{\frac{p-\nu}{a}}}\,
\frac{a}{2\pi |z|^{2(1-a)}(1+|z|^{2a})^2}\,i\,dz\wedge d\overline z
=\int_0^{+\infty}\frac{2a\,r^{2(j-\nu)+2a-1}}{(1+r^{2a})^{\frac{p-\nu}{a}+2}}\,dr\;.$$
Using the substitution $r^{2a}=x/(1-x)$, it follows that 
\begin{equation}\label{e:zj}
\|z^j\|^2_p=B\left(1+\frac{j-\nu}{a}\,,1+\frac{p-j}{a}\right)<\infty\, \text{ if and only if }\, j_0\leq j\leq p\,.
\end{equation}
Since $(z^j,z^k)_p=0$ for $j\neq k$ we conclude that  
$$P_p(z)=\sum_{j=j_0}^p\frac{|z^j|^2_{h_p}}{\|z^j\|^2_p}\;,$$ 
which yields the desired formula for $P_p$. The remaining assertions are straightforward. 
\end{proof}

\begin{Remark}\label{R:cancel} If $\nu>0$ the function $\psi$ defined 
in \eqref{e:psi} is quasisubharmonic and has a pole at $0$ 
with Lelong number $\nu$. When $a=1$ and $0<\nu<1$, $\omega_1$ 
is the Fubini-Study metric on ${\mathbb P}^1$, hence it is smooth, 
and $P^{1,\nu}_p(z)\sim|z|^{-2\nu}$ blows up at $0$. 
So the presence of a logarithmic pole at $0$ in the Hermitian metric 
$h_p$ on $L^p$ makes $P_p(0)$ become infinite. 
On the other hand, if $\nu=0$ and $0<a<1$ then 
$P^{a,0}_p(0)\sim p/a$ while $P^{a,0}_p(z)\sim p$ for $z\neq 0$, 
by Theorem \ref{T:mt1}. So the presence of a conical singularity in $\omega_a$ 
at $0$ makes $P_p$ peak at $0$. 
Proposition \ref{P:rugby} shows that the following 
``interference" can appear in the presence of both a logarithmic pole at $0$ 
in $h_p$ and a conical singularity at $0$ in $\omega_a$: if $a\leq\nu<1$ 
then $P^{a,\nu}_p(0)=0$.
\end{Remark}

Next we let $|z|^{2a}=\frac{ay}{p}$ and, inspired by \cite{CCLW}, 
we are interested in the limit as $p\to\infty$ of the scaled Bergman kernel function

\begin{equation}\label{e:scale1}
F_p(y)=F^{a,\nu}_p(y):=\frac{1}{p}\,P^{a,\nu}_p\left(\left(\frac{ay}{p}\right)^{\frac{1}{2a}}\right),\,\;y\geq0\;.
\end{equation} 
Recall the definition of the Mittag-Leffler function 

\begin{equation}\label{e:MLfunc}
E_{r,s}(\zeta)=\sum_{j=0}^\infty\frac{\zeta^j}{\Gamma(rj+s)}\,,
\end{equation}
where $r>0$, $s\geq0$, and $\Gamma$ is the Euler Gamma function. 

\begin{Theorem}\label{T:rugby}
In the above setting we have that 
$$F_p(y)\to\frac{1}{a}\,y^{\frac{j_0-\nu}{a}}e^{-y}
E_{\frac{1}{a},1+\frac{j_0-\nu}{a}}\big(y^{1/a}\big)\,\text{ as } p\to\infty\,,$$
locally uniformly for $y\in(0,+\infty)$ (or for $y\in[0,+\infty)$ when $j_0-\nu\geq0$). 
\end{Theorem}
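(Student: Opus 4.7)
The plan is to work directly from the closed-form expression in Proposition~\ref{P:rugby}. After re-indexing by $\ell=j+j_0$, the formula becomes
\begin{equation*}
P_p(z)=\frac{\Gamma\!\big(2+\tfrac{p-\nu}{a}\big)}{|z|^{2\nu}(1+|z|^{2a})^{(p-\nu)/a}}\sum_{\ell=j_0}^{p}\frac{|z|^{2\ell}}{\Gamma\!\big(1+\tfrac{\ell-\nu}{a}\big)\Gamma\!\big(1+\tfrac{p-\ell}{a}\big)}\,.
\end{equation*}
Substituting $|z|^{2a}=ay/p$ gives $|z|^{2\ell}=(ay/p)^{\ell/a}$. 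The prefactor $(1+ay/p)^{-(p-\nu)/a}$ converges to $e^{-y}$ locally uniformly in $y\geq 0$ as $p\to\infty$, and I would isolate this factor at the outset. What remains is then a sum over $\ell$ (equivalently $j=\ell-j_0\geq 0$) to be analyzed by Stirling's formula plus dominated convergence.

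For the summand-wise limit, fix $j\geq 0$, set $\lambda_j=(j+j_0-\nu)/a$, and use the classical asymptotic $\Gamma(x+c)/\Gamma(x)\sim x^c$ to obtain
\begin{equation*}
\frac{\Gamma\!\big(2+\tfrac{p-\nu}{a}\big)}{\Gamma\!\big(1+\tfrac{p-j-j_0}{a}\big)}\sim\Big(\frac{p}{a}\Big)^{1+\lambda_j}\quad(p\to\infty).
\end{equation*}
Combining this with the powers of $ay/p$, all factors of $p$ and $a$ collapse to leave $\tfrac{p}{a}y^{(j+j_0-\nu)/a}$, so the $j$th summand of $F_p(y)=P_p(z)/p$ converges to
\begin{equation*}
\frac{1}{a}\,\frac{e^{-y}\,y^{(j+j_0-\nu)/a}}{\Gamma\!\big(1+\tfrac{j+j_0-\nu}{a}\big)}\,,
\end{equation*}
which I would establish with quantitative error bounds locally uniformly in $y$ using the second-order Stirling expansion.

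The main obstacle is justifying the interchange of limit and sum, since the number of terms grows with $p$ and for $j$ comparable to $p$ the naive pointwise asymptotics breaks down. My plan is to produce a $p$-independent, summable upper bound of the form
\begin{equation*}
\frac{T_{p,j}}{p}\leq C(Y)\,\frac{Y^{(j+j_0-\nu)/a}}{\Gamma\!\big(1+\tfrac{j+j_0-\nu}{a}\big)}\qquad\text{for all }0\leq y\leq Y,\ 0\leq j\leq p-j_0\,.
\end{equation*}
To get this I would combine two ingredients: first, a uniform Stirling-type estimate $\Gamma(M+2)/\Gamma(M-\lambda_j+1)\leq C_0\,M^{1+\lambda_j}e^{\lambda_j^2/M}$ coming from $\log\Gamma(x+c)-\log\Gamma(x)=c\log x+O(c^2/x)$, and second, the elementary bound $(1+ay/p)^{(p-\nu)/a}\geq e^{cy}$ for $p$ large, which absorbs the Gaussian correction factor $e^{\lambda_j^2/M}$ once one notes $\lambda_j^2/M\leq Cj\cdot(j/p)$. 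This controls the regime $j=o(\sqrt p)$ by direct comparison, while for $j$ of order $p$ the ratio $T_{p,j+1}/T_{p,j}\sim(ay/j)^{1/a}\to 0$ makes the remaining terms geometrically (in fact super-geometrically) negligible compared to any fixed tail of the Mittag-Leffler series. Once the dominating series $\sum_{j\geq 0}Y^{(j+j_0-\nu)/a}/\Gamma(1+(j+j_0-\nu)/a)=Y^{(j_0-\nu)/a}E_{1/a,\,1+(j_0-\nu)/a}(Y^{1/a})$ is in hand, dominated convergence yields the required interchange.

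Finally I would identify the limit: with $\zeta=y^{1/a}$, summing the pointwise limits gives
\begin{equation*}
\lim_{p\to\infty}F_p(y)=\frac{e^{-y}}{a}\,y^{(j_0-\nu)/a}\sum_{j=0}^{\infty}\frac{\zeta^{j}}{\Gamma\!\big(\tfrac{j}{a}+1+\tfrac{j_0-\nu}{a}\big)}=\frac{1}{a}\,y^{(j_0-\nu)/a}e^{-y}E_{1/a,\,1+(j_0-\nu)/a}(y^{1/a})
\end{equation*}
by the definition \eqref{e:MLfunc}. Local uniformity on $(0,\infty)$ follows because all the estimates above are uniform on compact subsets $[y_0,Y]\subset(0,\infty)$; when $j_0-\nu\geq 0$ the factor $y^{(j_0-\nu)/a}$ is continuous at $0$, so the convergence extends up to $y=0$.
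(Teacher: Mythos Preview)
Your overall strategy matches the paper's: factor out $(1+ay/p)^{-(p-\nu)/a}\to e^{-y}$, establish termwise convergence of the remaining series via Stirling, and justify the interchange of sum and limit by a $p$-uniform summable majorant. The termwise limits and the final identification with the Mittag--Leffler function are correct.

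The weak point is your uniform majorant. The second-order Stirling estimate $\Gamma(M+2)/\Gamma(M-\lambda_j+1)\leq C_0\,M^{1+\lambda_j}e^{\lambda_j^2/M}$ is only useful when $\lambda_j=o(\sqrt{M})$; for larger $j$ the correction $e^{\lambda_j^2/M}$ blows up, and your claim that the bound $(1+ay/p)^{(p-\nu)/a}\geq e^{cy}$ ``absorbs'' it cannot work as stated, since that factor depends only on $y$ while $e^{\lambda_j^2/M}$ depends on $j$ and $p$. The ratio-test argument you sketch for $j$ of order $p$ is plausible, but neither it nor the Stirling bound obviously covers the intermediate range $\sqrt{p}\ll j\ll p$, so as written the domination step has a gap.

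The paper closes this gap with a single inequality (its Lemma~\ref{L:gamma}): for all $r\geq0$ and $s\geq1$,
\[
\frac{\Gamma(r+s)}{\Gamma(s)}\;\leq\;e^{1/12}\,(r+s)^r.
\]
The point is that $(r+s)^r$, not $s^r e^{O(r^2/s)}$, appears on the right, and this holds uniformly even when $r$ is comparable to $s$. Taking $r=1+\tfrac{j+j_0-\nu}{a}$ and $s=1+\tfrac{p-j-j_0}{a}$ gives at once
\[
\frac{\Gamma\!\big(2+\tfrac{p-\nu}{a}\big)\,a^{\,1+(j+j_0-\nu)/a}}{\Gamma\!\big(1+\tfrac{p-j-j_0}{a}\big)\,p^{\,1+(j+j_0-\nu)/a}}
\;\leq\;e^{1/12}\Big(\big(2+\tfrac{p-\nu}{a}\big)\tfrac{a}{p}\Big)^{1+(j+j_0-\nu)/a}\;<\;C^j,
\]
hence $c_{p,j}\leq C^j/\Gamma\!\big(1+\tfrac{j+j_0-\nu}{a}\big)$ uniformly in $p$ and $j$. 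A Weierstrass $M$-test (the paper packages this as Lemma~\ref{L:pser}) then yields locally uniform convergence of the whole series with no regime splitting at all. If you replace your two-regime argument by this one-line bound, your proof becomes complete and essentially identical to the paper's.
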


The theorem gives a scaling asymptotics of the Bergman kernel for $z$ 
approaching the singularity at 0 (resp. at $\infty$) as $p\to\infty$. 
Namely, if $|z|^{2a}=\frac{ay}{p}$ then 
\begin{equation}\label{e:scasym1}
P_p(z)\simeq\frac{p}{a}\,y^{\frac{j_0-\nu}{a}}e^{-y}
E_{\frac{1}{a},1+\frac{j_0-\nu}{a}}\big(y^{1/a}\big)\,\text{ as } p\to\infty\,.
\end{equation}
In particular, if $\nu=0$ we obtain that 
$$P_p(z)\simeq\frac{p}{a}\,e^{-y}E_{\frac{1}{a},1}
\big(y^{1/a}\big)\,\text{ as } p\to\infty\,,\,\text{ and }P_p(0)=\frac{p}{a}+1\,.$$

For the proof of Theorem \ref{T:rugby} we need the following lemmas. 

\begin{Lemma}\label{L:gamma}
If $r\geq0$ and $s\geq1$ then $\displaystyle\frac{\Gamma(r+s)}{\Gamma(s)}\leq 
e^{1/12}(r+s)^r$. Moreover, 
$\displaystyle\lim_{s\to\infty}\frac{\Gamma(r+s)}{\Gamma(s)s^r}=1$ 
locally uniformly in $r\geq0$. 
\end{Lemma}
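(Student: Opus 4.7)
The plan is to prove both assertions using Stirling's explicit bounds for the Gamma function, namely
\[
\sqrt{2\pi}\,x^{x-1/2}e^{-x} \leq \Gamma(x) \leq \sqrt{2\pi}\,x^{x-1/2}e^{-x}e^{1/(12x)}, \quad x>0.
\]

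For the first inequality, I would apply Stirling's upper bound to $\Gamma(r+s)$ and Stirling's lower bound to $\Gamma(s)$, then factor out $(r+s)^r$ from the numerator to obtain
\[
\frac{\Gamma(r+s)}{\Gamma(s)} \leq (r+s)^r \left(\frac{r+s}{s}\right)^{s-1/2} e^{-r}\, e^{1/(12(r+s))}.
\]
It then suffices to prove that $(1+r/s)^{s-1/2} e^{-r}\, e^{1/(12(r+s))} \leq e^{1/12}$. The elementary bound $\log(1+x) \leq x$ applied with exponent $s-1/2>0$ yields $(1+r/s)^{s-1/2} \leq \exp\bigl((s-1/2)r/s\bigr) \leq e^r$, which cancels against $e^{-r}$. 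Since $r+s \geq s \geq 1$, one has $e^{1/(12(r+s))} \leq e^{1/12}$, completing the bound. The case $r=0$ reduces to the trivial inequality $1\leq e^{1/12}$.

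For the asymptotic statement, I would use the refined form $\Gamma(x) = \sqrt{2\pi}\,x^{x-1/2}e^{-x}(1+O(1/x))$ as $x\to\infty$ to write
\[
\frac{\Gamma(r+s)}{\Gamma(s)\,s^r} = \left(1+\frac{r}{s}\right)^{s+r-1/2} e^{-r}(1+O(1/s)).
\]
Splitting $(1+r/s)^{s+r-1/2} = (1+r/s)^s \cdot (1+r/s)^{r-1/2}$, the first factor tends to $e^r$ and the second to $1$ as $s\to\infty$, giving the pointwise limit $1$. For locally uniform convergence in $r\in[0,M]$, I would note that $s\log(1+r/s) = r - r^2/(2s) + O(M^3/s^2)$ converges to $r$ uniformly on $[0,M]$, and likewise the factors $(1+r/s)^{r-1/2}$ and the $O(1/s)$ remainder coming from Stirling are uniformly controlled for $r$ in a compact set.

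No substantial obstacle is expected: the proof is routine once the sharper Stirling inequality with explicit $e^{1/(12x)}$ correction is invoked. The only mild subtlety is that a cruder form of Stirling's inequality would not yield the constant $e^{1/12}$ exactly, so the precise form of the upper bound must be used.
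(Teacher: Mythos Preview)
Your proposal is correct and takes essentially the same approach as the paper: both proofs use Stirling's formula with the explicit error term $0<\mu(x)<\tfrac{1}{12x}$, bound $(1+r/s)^{s}\leq e^r$ (or equivalently $(1+r/s)^{s-1/2}\leq e^r$), and use $r+s\geq1$ to control the exponential correction by $e^{1/12}$. The only cosmetic difference is that the paper writes the Stirling factor as $\sqrt{s/(r+s)}\,(1+r/s)^s$ while you write it as $(1+r/s)^{s-1/2}$, which is the same thing.
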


\begin{proof} By Stirling's formula we have for all $x>0$ that 
$$\Gamma(x)=\sqrt{\frac{2\pi}{x}}\left(\frac{x}{e}\right)^xe^{\mu(x)}\,,\,
\text{ where }0<\mu(x)<\frac{1}{12x}\,.$$
Thus
$$\frac{\Gamma(r+s)}{\Gamma(s)}
=\sqrt{\frac{s}{r+s}}\;e^{-r}\left(1+\frac{r}{s}\right)^s(r+s)^re^{\mu(r+s)-\mu(s)}.$$
Since $s\geq1$, $\mu(r+s)-\mu(s)<1/12$. 
Moreover, $(1+r/s)^s\leq e^r$, so the inequality in the statement follows. Next
$$\frac{\Gamma(r+s)}{\Gamma(s)s^r}
=\sqrt{\frac{s}{r+s}}\;e^{-r}\left(1+\frac{r}{s}\right)^s
\left(1+\frac{r}{s}\right)^re^{\mu(r+s)-\mu(s)}\to1$$
as $s\to\infty$, locally uniformly for $r\geq0$.
\end{proof}

The next lemma is very simple and we omit its proof.

\begin{Lemma}\label{L:pser}
Let $f_n(\zeta)=\sum_{j=0}^\infty c_{n,j}\zeta^j$ be entire functions 
such that $c_{n,j}\to d_j$ as $n\to\infty$, for all $j\geq0$. 
Assume that there exist $\xi_j>0$ such that $|c_{n,j}|<\xi_j$ 
for all $n,\,j$, and the function $g(\zeta)=\sum_{j=0}^\infty\xi_j\zeta^j$ is entire. 
Then the function $f(\zeta)=\sum_{j=0}^\infty d_j\zeta^j$ is entire and 
$f_n\to f$ as $n\to\infty$ locally uniformly on $\mathbb C$.
\end{Lemma}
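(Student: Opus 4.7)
The plan is to reduce the lemma to a standard tail-head splitting argument of Weierstrass M-test type: first verify that the candidate limit function $f$ is entire, and then show that on any compact disc the tails of all $f_n$ and of $f$ can be made uniformly small by the single dominating series $g$.

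First I would note that the hypothesis $|c_{n,j}|<\xi_j$ for every $n$ passes to the limit as $n\to\infty$ to give $|d_j|\leq\xi_j$ for all $j\geq0$. Since $g(\zeta)=\sum_{j\geq0}\xi_j\zeta^j$ is entire, for every $R>0$ one has $\sum_{j\geq0}\xi_j R^j<\infty$. Hence $\sum_{j\geq0}d_j\zeta^j$ converges absolutely on every disc $\{|\zeta|\leq R\}$, so $f$ is entire. This also shows that both series $f_n$ and $f$ are dominated on $\{|\zeta|\leq R\}$ by the convergent positive series $\sum_{j\geq0}\xi_j R^j$.

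Next, to establish the locally uniform convergence $f_n\to f$, I would fix $R>0$ and $\varepsilon>0$, and choose $N=N(R,\varepsilon)$ so that $\sum_{j>N}\xi_j R^j<\varepsilon/4$; this is possible because the tail of the entire series for $g$ at $\zeta=R$ tends to $0$. For $|\zeta|\leq R$ I would split
\begin{equation*}
|f_n(\zeta)-f(\zeta)|\;\leq\;\sum_{j=0}^{N}|c_{n,j}-d_j|\,R^{j}
\;+\;\sum_{j>N}\bigl(|c_{n,j}|+|d_j|\bigr)\,R^{j}
\;\leq\;\sum_{j=0}^{N}|c_{n,j}-d_j|\,R^{j}+2\!\sum_{j>N}\xi_j R^{j}.
\end{equation*}
The second term is at most $\varepsilon/2$ by the choice of $N$. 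The first term is a finite sum in which each $|c_{n,j}-d_j|\to0$ as $n\to\infty$, so there exists $n_0=n_0(R,\varepsilon,N)$ with that sum bounded by $\varepsilon/2$ for $n\geq n_0$. Combining these gives $\sup_{|\zeta|\leq R}|f_n(\zeta)-f(\zeta)|<\varepsilon$ for $n\geq n_0$, which is the required locally uniform convergence.

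There is essentially no serious obstacle here: the argument is a textbook dominated-convergence style estimate, and the only mild point is to record the a priori bound $|d_j|\leq\xi_j$ before invoking dominance of the tails. No further structural input from the paper (neither subharmonicity, $\db$-estimates, nor Bergman kernel properties) is needed.
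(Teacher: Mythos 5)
Your proof is correct. The paper itself omits the proof of this lemma, stating only that it is ``very simple,'' and your head--tail splitting with the entire dominating series $g$ controlling the tails uniformly on discs is precisely the standard argument the authors evidently had in mind; the preliminary observation that $|d_j|\leq\xi_j$ is the right (and only) point that needs recording before the tail estimate.
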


\begin{proof}[Proof of Theorem \ref{T:rugby}] We have 
\begin{eqnarray*}
F_p(y)&=&\left(1+\frac{ay}{p}\right)^{-\frac{p-\nu}{a}}
\frac{(ay)^{\frac{j_0-\nu}{a}}}{p^{1+\frac{j_0-\nu}{a}}}\,
\sum_{j=0}^{p-j_0}\frac{\Gamma\left(2+\frac{p-\nu}{a}
\right)(ay)^{\frac{j}{a}}}{\Gamma\left(1+\frac{j+j_0-\nu}{a}\right)
\Gamma\left(1+\frac{p-j-j_0}{a}\right)p^{\frac{j}{a}}}\\
&=&\frac{1}{a}\;y^{\frac{j_0-\nu}{a}}\left(1+\frac{ay}{p}\right)^{-\frac{p-\nu}{a}}G_p(y)\,,
\end{eqnarray*}
where
$$G_p(y)=\sum_{j=0}^{p-j_0}c_{p,j}y^{\frac{j}{a}}\;,\,\;c_{p,j}
=\frac{\Gamma\left(2+\frac{p-\nu}{a}\right)a^{1+\frac{j+j_0-\nu}{a}}}
{\Gamma\left(1+\frac{j+j_0-\nu}{a}\right)\Gamma\left(1+\frac{p-j-j_0}{a}\right)
p^{1+\frac{j+j_0-\nu}{a}}}\,.$$
By Lemma \ref{L:gamma} we have that for all $p,j$,
$$\frac{\Gamma\left(2+\frac{p-\nu}{a}\right)a^{1+\frac{j+j_0-\nu}{a}}}
{\Gamma\left(1+\frac{p-j-j_0}{a}\right)p^{1+\frac{j+j_0-\nu}{a}}}
\leq e^{1/12}\left(\left(2+\frac{p-\nu}{a}\right)\frac{a}{p}\right)^{1+\frac{j+j_0-\nu}{a}}<C^j\;,$$
for some constant $C>1$. Moreover, for $j$ fixed,
$$\lim_{p\to\infty}\frac{\Gamma\left(2+\frac{p-\nu}{a}\right)a^{1+\frac{j+j_0-\nu}{a}}}
{\Gamma\left(1+\frac{p-j-j_0}{a}\right)p^{1+\frac{j+j_0-\nu}{a}}}=1\,.$$
Hence
$$0<c_{p,j}<\frac{C^j}{\Gamma\left(1+\frac{j+j_0-\nu}{a}\right)}\;,\,\;
\lim_{p\to\infty}c_{p,j}=\frac{1}{\Gamma\left(1+\frac{j+j_0-\nu}{a}\right)}\;.$$
Now by Lemma \ref{L:pser}, 
$f_p(\zeta):=\sum_{j=0}^{p-j_0}c_{p,j}\zeta^j\to E_{\frac{1}{a},1+\frac{j_0-\nu}{a}}(\zeta)$ 
locally uniformly on $\mathbb C$. 
So $G_p(y)=f_p(y^{1/a})\to E_{\frac{1}{a},1+\frac{j_0-\nu}{a}}\big(y^{1/a}\big)$
locally uniformly in $y\geq0$, and the proof is complete. 
\end{proof}

We conclude this section with a closed formula for $P^{a,0}_p$ 
in the case when $a=1/s$ for some positive integer $s$. 

\begin{Proposition}\label{P:rugbyr} If $\nu=0$ and $a=1/s$, 
where $s>0$ is an integer, then
$$P_p(z)=P^{1/s,0}_p(z)=\left(p+\frac{1}{s}\right)
\left(1+\sum_{\ell=1}^{s-1}\left(\frac{1+e^{2\pi\ell i/s}|z|^{2/s}}{1+|z|^{2/s}}\right)^{ps}\right)\,.$$
In particular, $P_p(0)=sp+1$, while $P_p$ has the following asymptotic expansion 
on ${\mathbb C}\setminus\{0\}$\/{\rm:} for every $M>1$ 
there exists $\theta=\theta(M)>0$ such that if $1/M\leq|z|\leq M$ then 
$$P_p(z)=p+\frac{1}{s}+O(e^{-\theta p})\,.$$
\end{Proposition}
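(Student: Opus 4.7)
The plan is to specialize the explicit formula from Proposition~\ref{P:rugby} to the case $\nu = 0$, $a = 1/s$, evaluate the Beta functions (which now involve only integer arguments), and then apply a roots-of-unity filter to collapse the resulting sum into a closed form.

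First I would observe that with $\nu=0$ we have $j_0 = 0$, and with $a = 1/s$ both $js$ and $(p-j)s$ are nonnegative integers. Hence
\[
\frac{1}{B(1+js,\,1+(p-j)s)} = \frac{\Gamma(2+ps)}{\Gamma(1+js)\,\Gamma(1+(p-j)s)} = (ps+1)\binom{ps}{js}.
\]
Writing $w = |z|^{2/s}$, Proposition~\ref{P:rugby} then gives
\[
P_p(z) = \frac{ps+1}{(1+w)^{ps}} \sum_{j=0}^{p} \binom{ps}{js}\, w^{js}.
\]

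The key step is to recognize the inner sum as the ``every-$s$-th-term'' slice of the binomial expansion $(1+w)^{ps} = \sum_{k=0}^{ps} \binom{ps}{k} w^k$. The standard roots-of-unity filter with $\zeta_s = e^{2\pi i/s}$ yields
\[
\sum_{j=0}^{p}\binom{ps}{js}\,w^{js} = \frac{1}{s}\sum_{\ell=0}^{s-1} (1+\zeta_s^\ell w)^{ps}.
\]
Substituting, factoring out the $\ell=0$ contribution (which equals $(1+w)^{ps}$), and using $(ps+1)/s = p+1/s$ produces the stated formula. Setting $w=0$ makes every ratio equal to $1$, so $P_p(0) = (p+1/s)\cdot s = sp+1$.

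For the asymptotics on compact subsets of $\mathbb{C}^\ast$, I would estimate each $\ell \neq 0$ summand. A direct computation gives
\[
\Big|\frac{1+\zeta_s^\ell w}{1+w}\Big|^2 = \frac{1+2w\cos(2\pi\ell/s)+w^2}{(1+w)^2} = 1 - \frac{2w\bigl(1-\cos(2\pi\ell/s)\bigr)}{(1+w)^2}.
\]
For $1/M \leq |z| \leq M$, the variable $w$ ranges in the compact set $[M^{-2/s}, M^{2/s}] \subset (0,\infty)$, so the above ratio is bounded by some $q = q(M) < 1$ uniformly in $\ell \in \{1,\dots,s-1\}$. Raising to the $ps$-th power gives the bound $O(e^{-\theta p})$ with $\theta = -s\log q > 0$, and summing over the $s-1$ nontrivial roots preserves this estimate. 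This yields $P_p(z) = p + 1/s + O(e^{-\theta p})$.

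There is no real obstacle here: the entire argument is a computation once the roots-of-unity identity is invoked. The only mild subtlety is verifying that the quantity $|1+\zeta_s^\ell w|/(1+w)$ is bounded away from $1$ uniformly on $\{1/M\leq |z|\leq M\}$, which follows from compactness and the strict inequality $\cos(2\pi\ell/s) < 1$ for $\ell \not\equiv 0 \pmod s$.
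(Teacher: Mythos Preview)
Your proof is correct and follows essentially the same route as the paper: specialize Proposition~\ref{P:rugby}, reduce the Beta function to a binomial coefficient, and apply the roots-of-unity filter $\sum_{\ell=0}^{s-1}(1+\zeta^\ell y)^{ps}=s\sum_{j}\binom{ps}{js}y^{js}$ with $y=|z|^{2/s}$. You also spell out the exponential decay of the $\ell\neq0$ terms on annuli, which the paper leaves as an implicit consequence of the closed formula.
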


\begin{proof} By Proposition \ref{P:rugby},
$$P_p(z)=\frac{1}{(1+|z|^{2/s})^{ps}}
\sum_{j=0}^p\frac{|z|^{2j}}{B(1+sj,1+s(p-j))}
=\frac{sp+1}{(1+|z|^{2/s})^{ps}}\sum_{j=0}^p\binom{ps}{js}|z|^{2j}\,.$$
If $\zeta=e^{2\pi i/s}$ we have that 
$$\sum_{\ell=0}^{s-1}(1+\zeta^\ell y)^{ps}=
\sum_{k=0}^{ps}\binom{ps}{k}y^k\sum_{\ell=0}^{s-1}\zeta^{kl}
=s\sum_{j=0}^p\binom{ps}{js}y^{sj},$$
since $\zeta^k=1$ if $s$ divides $k$, and $\sum_{\ell=0}^{s-1}\zeta^{kl}=0$ 
if $s$ does not divide $k$. The conclusion follows if we let $y=|z|^{2/s}$ in the above formula.
\end{proof}
Thus the Bergman kernel $P^{1/s,0}_p$ has the same structure \eqref{e:Bexp003}
as in the case of the punctured disc endowed with the Poincar\'e metric,
namely only the first two terms are non-vanishing and the remainder
has exponential decay (cf.\ also Remark \ref{R:ga}). 

\subsection{Density of states on the lowest Landau level}\label{SS:dsLLL}

Here we comment on the relation of the results in the previous 
subsection to the density of states function on the lowest Landau level (LLL) 
on singular surfaces. The surface with the metric \eqref{e:oma} is pictured on Fig.\ 1. 
In physics terms \eqref{e:c2} means that the constant magnetic field 
(not pictured on Fig. 1) $B=p-\nu$ is turned on, with flux lines everywhere perpendicular to the surface.

\begin{wrapfigure}{l}{0.25\textwidth}
\centering
\begin{tikzpicture}
    \draw[scale=2,domain=-1:1,smooth,variable=\x,purple,very thick]  plot ({-(-(\x)^2+3)^.5},{\x});
        \draw[scale=2,domain=-1:1,smooth,variable=\x,purple,very thick]  plot ({(-(\x)^2+3)^.5-2.835},{\x});
                \draw[scale=1,domain=-.61:.63,dashed,variable=\x,purple,very thick]  
                plot ({\x-2.8},{(-(\x)^2+3)^.5-1.6});
                                \draw[scale=1,domain=-.65:.61,smooth,variable=\x,purple,very thick]  
                                plot ({\x-2.8},{-(-(\x)^2+3)^.5+1.6});
                                    \draw[black] (-2.54,2.4) node[below]{$\nu$};
                                    \draw[->,blue,very thick] (-2.83,1.7) -- (-2.84,2.3);
                                                                        \draw[black] (-2.84,-1.4) node[below]{$a$};
                                                                        \draw[black] (-2.84,1.8) node[below]{$a$};
  \end{tikzpicture}
  \vspace{0.5cm} \\
{\small Figure 1.\, Spindle with the cone angle $a$ and Aharonov-Bohm flux $\nu$.}\\
\end{wrapfigure}

In addition, there is a delta-function (Aharonov-Bohm) flux 
$\nu$ localized exactly at the north pole, see Fig.\ 1, so that the total 
flux of the magnetic field $p\in\mathbb Z$ 
through the compact surface is an integer. 
In terms of the singular metric \eqref{e:hp} 
the AB-flux $\nu$ is the Lelong number of the weight $\psi$ from \eqref{e:psi}.
This is the compact surface version of the setup of \cite{CCLW} 
where density of states was studied on a flat cone with a boundary. 

As was already pointed out in Remark \ref{R:cancel} the Bergman kernel \eqref{e:Pp} diverges as $P_p(z)\sim|z|^{2\nu}$ for $\nu<0$ and for 
$0\leq\{\nu\}<a$. This is because the Hermitian norm $|z^0|_{h^p}^2$ 
of the identity section $z^0$ \eqref{e:zj} corresponding to the 
LLL wave function with the smallest angular momentum, 
is singular at $z=0$ for these values of $\nu$, 
while the section is $L^2$-normalizable. 
The question may arise whether this section shall be kept in the spectrum. 
Here we answer this question in affirmative. The singular value of 
$P_p(0)$ is an artefact  of the delta-function form of the AB-flux, which should be smeared 
over some $\varepsilon$-neighborhood around zero. Hence the density is also smoothed out in this neighborhood. 

This leads to an interesting effect, when the AB-flux $\nu$ 
is allowed to vary over the real line, say, in the range $0\leq\nu<\infty$. 
Between $0\leq\nu<a$ the density $P_p(0)$ is peaked, 
while at $a\leq\nu<1$ it drops to zero $P_p(0)=0$. This pattern then repeats in the interval $[1,2]$ and so on. 
This is a manifestation of the Laughlin's ``shift register" on the LLL, 
first described in the original argument for the quantization 
of the Hall conductance \cite{L1}. As $\nu$ becomes greater than $a$, 
the identity section (wave function) $z^0$ becomes non-normalizable \eqref{e:zj} 
and drops from the spectrum of physical states, i.e. disappears into the conical singularity. 
At the same time a new $L^2$-normalizable section localized at the equator 
emerges so the that the total number of states is preserved. 
(In Laughlin's setting of annulus geometry, the wave functions travelled 
from the outer to the inner edge of the annulus, as AB-flux varied form $0$ to $1$.)

One consequence of Theorems \ref{T:mt1} and \ref{T:mt2} 
is that at a certain small distance (in units set by magnetic length $l_B^2\sim1/p$) 
away from the singular point the Bergman kernel tends to its constant value 
$2\pi P_p\sim p$. Thus the interesting behavior of the density profile 
happens around a small area near the singular point which shrinks 
as $p$ tends to infinity. One way to study the density profile, 
suggested in \cite{CCLW} is to use the rescaled coordinate 
$y=p|z|^{2a}/a$ in order to zoom in on the point $z=0$.
Remarkably, this leads to the universal finite result 
for the density profile near the conical singularity in Theorem \ref{T:rugby}, 
in agreement with the results of \cite{CCLW} for the flat cone.


\subsection{Metrics with a logarithmic pole}\label{SS:pole} 
We consider again ${\mathbb P}^1$ and the metric $\omega_a$ 
with conical singularities at $0$ and $\infty$, defined in \eqref{e:oma}. 
But here we endow the line bundle $L={\mathcal O}(1)$ 
with the Hermitian metric $h$ determined by the plurisubharmonic function 
\begin{equation}\label{e:phip}
\varphi(t,z)=\nu\log|z|+\frac{1-\nu}{2a}\,\log(|t|^{2a}+|z|^{2a})\,,\,\;0<a\leq1\,,\;0<\nu\leq1.
\end{equation}
We let $h_p=h^{\otimes p}$ be the induced metric on $L^p$. 
Note that $\varphi(1,z)$ has a logarithmic pole at $0$ with Lelong number $\nu$, and 
$$c_1(L^p,h_p)=p(1-\nu)\omega_a+p\nu\delta(0)\,.$$

Let $P_p=P^{a,\nu}_p$ be the Bergman kernel of the Hilbert space 
$H^0_{(2)}({\mathbb P}^1\setminus\{0,\infty\},L^p)$ 
of $L^2$-integrable holomorphic sections of $L^p$ relative to the metrics 
$h_p$ and $\omega_a$. By a similar calculation as in the proof of 
Proposition \ref{P:rugby} we obtain the following:

\begin{Proposition}\label{P:pole}
In the above setting, 
$$P^{a,\nu}_p(z)=\frac{|z|^{2(j_p-p\nu)}}{(1+|z|^{2a})^{\frac{p(1-\nu)}{a}}}\,
\sum_{j=0}^{p-j_p}\frac{|z|^{2j}}{B\left(1+\frac{j+j_p-p\nu}{a}\,,1
+\frac{p-j-j_p}{a}\right)}\,,\,\;z\in\mathbb C\,,$$
where $j_p=\floor{p\nu-a}+1$. We have that $j_p-p\nu\in(-a,1-a]$, 
and $j_p-p\nu=-\{p\nu\}$ when $a=1$. Moreover, if $\nu=1$ then $j_p=p$ and $P^{a,1}(z)=1$.
\end{Proposition}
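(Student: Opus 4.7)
The proof follows closely the pattern of Proposition \ref{P:rugby}, with $\nu$ replaced throughout by $p\nu$ in the relevant places (since now the logarithmic pole sits inside $h$ rather than in an auxiliary factor $\psi$). Working in the affine chart $U_0=\mathbb{C}$ around $[1:z]$, with the natural frame $\sigma_0$ of $\mathcal{O}(1)$ induced by the section $t$, the local weight of $h$ is
$$\varphi(z)=\varphi(1,z)=\nu\log|z|+\frac{1-\nu}{2a}\log(1+|z|^{2a}),$$
so that $|\sigma_0^{\otimes p}|_{h_p}^2=e^{-2p\varphi(z)}=|z|^{-2p\nu}(1+|z|^{2a})^{-p(1-\nu)/a}$. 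Any holomorphic section of $L^p$ over $\mathbb{P}^1\setminus\{0,\infty\}$ is of the form $f(z)\sigma_0^{\otimes p}$ with $f$ holomorphic on $\mathbb{C}^*$, and in the chart around $\infty$ (coordinate $w=1/z$, frame $\sigma_1=z\sigma_0$) it reads $w^{p-j}\sigma_1^{\otimes p}$ for $f(z)=z^j$. Thus the problem reduces to computing $L^2$-norms of Laurent monomials.

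The first step is to exhibit $\{z^j\}$ as an orthogonal system and to determine the admissible exponents. Rotational invariance of both $|z^j|_{h_p}^2$ and $\omega_a$ makes the monomials pairwise orthogonal. Using \eqref{e:oma} and polar coordinates, one obtains
$$\|z^j\|_p^2=\int_0^\infty\frac{2a\,r^{2(j-p\nu)+2a-1}}{(1+r^{2a})^{p(1-\nu)/a+2}}\,dr
=\int_0^\infty\frac{u^{(j-p\nu)/a}}{(1+u)^{p(1-\nu)/a+2}}\,du,$$
after the substitution $u=r^{2a}$. The Euler identity $B(\alpha,\beta)=\int_0^\infty u^{\alpha-1}(1+u)^{-\alpha-\beta}du$ identifies this with $B\!\left(1+\tfrac{j-p\nu}{a},\,1+\tfrac{p-j}{a}\right)$, finite precisely when $j>p\nu-a$ and $j<p+a$, i.e.\ $j_p\le j\le p$ (using $0<a\le1$). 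This identifies the integrable exponents.

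Next one must check that these monomials span $H^0_{(2)}(\mathbb{P}^1\setminus\{0,\infty\},L^p)$. Any section $f\sigma_0^{\otimes p}$ with $f\in\mathcal{O}(\mathbb{C}^*)$ expands as $f=\sum_{j\in\mathbb{Z}}c_j z^j$, and by orthogonality and monotone convergence, $\|f\sigma_0^{\otimes p}\|_p^2=\sum_j|c_j|^2\|z^j\|_p^2$; finiteness forces $c_j=0$ outside $[j_p,p]$. So $\{z^j\}_{j=j_p}^p$ is an orthogonal basis, and applying \eqref{e:BFS1}–\eqref{e:Pvar} yields
$$P_p^{a,\nu}(z)=\sum_{j=j_p}^{p}\frac{|z|^{2j}}{\|z^j\|_p^2}\cdot|z|^{-2p\nu}(1+|z|^{2a})^{-p(1-\nu)/a},$$
and the substitution $j\mapsto j+j_p$ recovers the stated formula.

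Finally the supplementary assertions are immediate: from $j_p-1\le p\nu-a<j_p$ one reads off $j_p-p\nu\in(-a,1-a]$; setting $a=1$ reduces this to $-\{p\nu\}$; and for $\nu=1$ one has $j_p=\lfloor p-a\rfloor+1=p$, leaving only the $j=p$ term with $B(1,1)=1$, giving $P_p^{a,1}\equiv1$. There is no substantive obstacle beyond keeping track of the Beta-function bookkeeping; the only delicate point is the completeness argument for $\{z^j\}_{j_p\le j\le p}$, which rests on the term-by-term evaluation of the Laurent-series norm justified by rotational symmetry.
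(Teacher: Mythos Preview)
Your proof is correct and follows the same approach as the paper, which in fact provides no independent proof here and merely says ``by a similar calculation as in the proof of Proposition~\ref{P:rugby}.'' Your argument is actually more complete than the paper's analogue: you explicitly verify that the monomials $\{z^j\}_{j_p\le j\le p}$ span $H^0_{(2)}(\mathbb{P}^1\setminus\{0,\infty\},L^p)$ via the Laurent expansion and Tonelli, whereas the paper takes this for granted, and you use the substitution $u=r^{2a}$ (leading directly to the standard Beta integral $\int_0^\infty u^{\alpha-1}(1+u)^{-\alpha-\beta}\,du$) in place of the paper's $r^{2a}=x/(1-x)$, which is an inessential variation.
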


Next we study the behavior of the scaled Bergman kernel function 
$F^{a,\nu}_p$ defined as in \eqref{e:scale1} by setting $|z|^{2a}=\frac{ay}{p}$. Namely:
$$F_p(y)=F^{a,\nu}_p(y):=\frac{1}{p}\,
P^{a,\nu}_p\left(\left(\frac{ay}{p}\right)^{\frac{1}{2a}}\right),\,\;y\geq0\;.$$
The difference with Section \ref{SS:rugby} is that now the sequence 
$\{F_p\}_{p\geq1}$ does not have a limit anymore, but it is relatively 
compact and its limit points are determined by the limit points of the bounded 
sequence $\{j_p-p\nu\}_{p\geq1}$. Note that when $\nu$ is irrational 
the latter sequence is dense in the interval $[-a,1-a]$.

\begin{Theorem}\label{T:pole}
In the above setting, assume that $p_k\to\infty$ is a sequence 
of positive integers such that $j_{p_k}-p_k\nu\to\theta\in[-a,1-a]$ as $k\to\infty$. Then
$$F_{p_k}(y)\to\frac{1-\nu}{a}\,((1-\nu)y)^{\frac{\theta}{a}}\,
e^{-(1-\nu)y}E_{\frac{1}{a},1+\frac{\theta}{a}}\big(((1-\nu)y)^{1/a}\big)\,,$$
locally uniformly for $y\in(0,+\infty)$. 
Hence the Bergman kernel 
has the following scaling asymptotics near $0$:  
$$P_{p_k}(z)\simeq\frac{p_k(1-\nu)}{a}\,((1-\nu)y)^{\frac{\theta}{a}}
\,e^{-(1-\nu)y}E_{\frac{1}{a},1+\frac{\theta}{a}}\big(((1-\nu)y)^{1/a}\big)\,,\,
\text{ where } |z|^{2a}=\frac{ay}{p_k}\,.$$
\end{Theorem}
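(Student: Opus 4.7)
The strategy closely parallels the proof of Theorem \ref{T:rugby}, only now the formula for the Bergman kernel is the one from Proposition \ref{P:pole} and the shift parameter $\theta_p:=j_p-p\nu$ no longer tends to a fixed quantity but merely stays bounded in $(-a,1-a]$, so we have to argue along the subsequence $p_k$ for which $\theta_{p_k}\to\theta$.

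First I would plug $|z|^{2a}=ay/p$ into the formula of Proposition \ref{P:pole}, and use the identity $1/B(x,y)=\Gamma(x+y)/(\Gamma(x)\Gamma(y))$ together with $(1+(j+\theta_p)/a)+(1+(p-j-j_p)/a)=2+p(1-\nu)/a$, to write
\[
F_p(y)=(1+ay/p)^{-p(1-\nu)/a}\sum_{j=0}^{p-j_p}c_{p,j}\,y^{(j+\theta_p)/a},
\]
where
\[
c_{p,j}=\frac{\Gamma\!\left(2+\frac{p(1-\nu)}{a}\right)\,(a/p)^{(j+\theta_p)/a}}{p\,\Gamma\!\left(1+\frac{j+\theta_p}{a}\right)\Gamma\!\left(1+\frac{p-j-j_p}{a}\right)}.
\]
The binomial factor is standard: $(1+ay/p)^{-p(1-\nu)/a}\to e^{-(1-\nu)y}$ as $p\to\infty$, uniformly on compact subsets of $[0,\infty)$.

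Next, for each fixed $j$, I would apply Lemma \ref{L:gamma} to the ratio $\Gamma(2+p(1-\nu)/a)/\Gamma(1+(p-j-j_p)/a)$ (the difference of arguments is $1+(j+\theta_p)/a$, which stays in a bounded set), obtaining the asymptotic equivalent $(p(1-\nu)/a)^{1+(j+\theta_p)/a}$. Combined with the remaining powers of $p$ and $a$, this gives
\[
c_{p,j}\longrightarrow\frac{1-\nu}{a}\cdot\frac{(1-\nu)^{(j+\theta)/a}}{\Gamma\!\left(1+\frac{j+\theta}{a}\right)}\quad(p=p_k\to\infty),
\]
and the same Stirling estimate from Lemma \ref{L:gamma} yields a bound $|c_{p,j}|\leq K^{j}/\Gamma(1+(j+\theta_p)/a)\leq \xi_j$ uniform in $p$, where $\sum_j\xi_j\zeta^j$ is an entire function of $\zeta$. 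Substituting $\zeta=y^{1/a}$ and applying Lemma \ref{L:pser} to $f_p(\zeta):=\sum_{j=0}^{p-j_p}c_{p,j}\zeta^{j+\theta_p}/\zeta^{\theta_p}$ (after pulling out the prefactor $y^{\theta_p/a}=\zeta^{\theta_p}$) lets me pass to the limit inside the series. Reassembling the remaining factor $y^{\theta_p/a}\to y^{\theta/a}$ (which is where local uniform convergence on $(0,\infty)$ rather than $[0,\infty)$ is forced when $\theta<0$), the sum is recognized as the Mittag-Leffler function $E_{1/a,1+\theta/a}(((1-\nu)y)^{1/a})$ multiplied by $\frac{1-\nu}{a}\,((1-\nu)y)^{\theta/a}$, which is exactly the claimed limit. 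The scaling asymptotics for $P_{p_k}(z)$ is then an immediate restatement via $F_{p_k}(y)=P_{p_k}(z)/p_k$.

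The only delicate point — and the step I expect to be the main obstacle — is the book-keeping around the subsequence: because $\theta_p$ fluctuates in $(-a,1-a]$, the factor $y^{\theta_p/a}$ is uniformly bounded on compacts of $(0,\infty)$ but blows up or vanishes at $y=0$ depending on the sign of $\theta$, and the uniform majorant $\xi_j$ must be chosen independently of the subsequential shift. Both are handled by noting that $(j+\theta_p)/a$ lives in a bounded shift of $\mathbb N/a$, so Lemma \ref{L:gamma} applies with constants independent of $p$, which is what makes Lemma \ref{L:pser} applicable.
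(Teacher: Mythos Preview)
Your proposal is correct and follows essentially the same route as the paper: the same substitution $|z|^{2a}=ay/p$, the same decomposition into the binomial prefactor times a power series with coefficients $c_{p,j}$, Lemma~\ref{L:gamma} for both the termwise limit and the uniform bound, and Lemma~\ref{L:pser} to pass to the limit. The only cosmetic difference is that the paper pulls out a global factor $\frac{1}{a}\,y^{\theta_p/a}$ and writes $G_p(y)=\sum c_{p,j}y^{j/a}$ separately, and it makes the $p$-uniform majorant explicit via Stirling as $c_{p,j}<C_2^j\,j^{-j/a}$, whereas you leave the majorant $\xi_j$ implicit; but you correctly identify that boundedness of $\theta_p\in(-a,1-a]$ is what makes the constants in Lemma~\ref{L:gamma} independent of $p$, which is exactly the point.
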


\begin{proof} Arguing as in the proof of Theorem \ref{T:rugby}, 
we write 
$$F_p(y)=\frac{1}{a}\;y^{\frac{j_p-p\nu}{a}}
\left(1+\frac{ay}{p}\right)^{-\frac{p(1-\nu)}{a}}G_p(y)\,,$$
where
$$G_p(y)=\sum_{j=0}^{p-j_p}c_{p,j}y^{\frac{j}{a}}\;,\,\;c_{p,j}
=\frac{\Gamma\left(2+\frac{p(1-\nu)}{a}\right)a^{1+\frac{j
+j_p-p\nu}{a}}}{\Gamma\left(1+\frac{j+j_p-p\nu}{a}\right)
\Gamma\left(1+\frac{p-j-j_p}{a}\right)p^{1+\frac{j+j_p-p\nu}{a}}}\,.$$
By Lemma \ref{L:gamma} we have that for all $p,j$,
$$\frac{\Gamma\left(2+\frac{p(1-\nu)}{a}\right)a^{1+\frac{j+j_p-p\nu}{a}}}
{\Gamma\left(1+\frac{p-j-j_p}{a}\right)p^{1+\frac{j+j_p-p\nu}{a}}}
\leq e^{1/12}\left(\left(2+\frac{p(1-\nu)}{a}\right)
\frac{a}{p}\right)^{1+\frac{j+j_p-p\nu}{a}}<C_1^j\;,$$
for some constant $C_1>1$. By Stirling's formula, 
$\Gamma(x)>x^{-1/2}(x/e)^x$. It follows that 
$$0<c_{p,j}<C_2^j\,j^{-j/a},\;p\geq1\,,\;0\leq j\leq p-j_p\,,$$
with some constant $C_2>1$ (here $0^0:=1$). Using Lemma \ref{L:gamma} 
again, we conclude that for $j$ fixed,
$$\lim_{k\to\infty}c_{p_k,j}=\frac{(1-\nu)^{1+\frac{j+\theta}{a}}}
{\Gamma\left(1+\frac{j+\theta}{a}\right)}\;.$$
Now Lemma \ref{L:pser} implies that 
$$G_{p_k}(y)\to(1-\nu)^{1+\frac{\theta}{a}}E_{\frac{1}{a},
1+\frac{\theta}{a}}\big(((1-\nu)y)^{1/a}\big)\,,$$ 
as $k\to\infty$, locally uniformly in $y\geq0$. This completes the proof. 
\end{proof}


\end{document}